\theoremstyle{plain} \numberwithin{equation}{section}
\newtheorem{Theorem}{Theorem}
\newtheorem{Lemma}[Theorem]{Lemma}
\newtheorem{Proposition}[Theorem]{Proposition}
\newtheorem{Corollary}[Theorem]{Corollary}
\newtheorem{Definition}[Theorem]{Definition}
\theoremstyle{remark}
\date{}
\title[Spectral Decompositions]
{Unconditional convergence of spectral decompositions of 1D Dirac
operators with regular boundary conditions}
\thanks{This paper was completed at the 
Mathematisches Forschungsinstitut Oberwolfach 
during our three week stay there 
in August 2010 within the Research in Pairs Programme.
We appreciate the hospitality and creative atmosphere of the Institute.} 
\author{Plamen Djakov}
\author{Boris Mityagin}
\begin{document}
\address{Sabanci University, Orhanli,
34956 Tuzla, Istanbul, Turkey}
 \email{djakov@sabanciuniv.edu}
\address{Department of Mathematics,
The Ohio State University,
 231 West 18th Ave,
Columbus, OH 43210, USA}
\email{mityagin.1@osu.edu}

\begin{abstract}
One dimensional  Dirac operators $$ L_{bc}(v) \,
y = i \begin{pmatrix} 1 & 0 \\ 0 & -1
\end{pmatrix}
\frac{dy}{dx}  + v(x) y, \quad y = \begin{pmatrix} y_1\\y_2
\end{pmatrix},  \quad
x\in[0,\pi],$$ considered
with  $L^2$-potentials $ v(x) = \begin{pmatrix} 0
& P(x) \\ Q(x) & 0 \end{pmatrix} $
and subject to regular boundary conditions ($bc$), have
discrete spectrum.

For  strictly regular $bc,$  it is shown that
every eigenvalue of the free operator $L^0_{bc}$
is simple and has the form
$\lambda_{k,\alpha}^0 = k + \tau_\alpha $  where $
\; \alpha \in \{1,2\},  \; k \in 2 \mathbb{Z} $
and $\tau_\alpha =\tau_\alpha (bc);$
if  $|k|>N(v, bc) $ each of the discs
$D_k^\alpha = \{z: \; |z-\lambda_{k,\alpha}^0| <\rho =\rho (bc) \} , $
$\alpha \in \{1,2\}, $  contains
exactly one simple eigenvalue $\lambda_{k,\alpha} $ of
$L_{bc} (v) $ and   
$(\lambda_{k,\alpha} -\lambda_{k,\alpha}^0)_{k\in 2\mathbb{Z}}  $
is an $\ell^2 $-sequence.
Moreover, it is proven that the  root projections $
 P_{n,\alpha} = \frac{1}{2\pi i} \int_{\partial D^\alpha_n}
(z-L_{bc} (v))^{-1} dz $ satisfy
the Bari--Markus condition $$\sum_{|n| > N}
\|P_{n,\alpha} - P_{n,\alpha}^0\|^2 < \infty, \quad n \in 2\mathbb{Z}, $$
where $P_n^0 $
are the root projections of the free operator $L^0_{bc}.$
Hence, for strictly regular $bc,$ there is a Riesz basis
consisting of root functions (all but finitely many being eigenfunctions).
Similar results are obtained for regular but not strictly regular $bc$  --
then in general
there is no Riesz basis consisting of root functions but we prove
that the corresponding system of  two-dimensional root projections
is a Riesz basis of projections.

\end{abstract}
\maketitle

{\it Keywords}: Dirac operators, Riesz bases, regular boundary conditions

 {\it MSC:} 47E05, 34L40, 34L10.\vspace{5mm}

{\em CONTENT}

\begin{enumerate}
\item[1.]{Introduction}

\item[2.]{Technical preliminaries; Riesz systems of projections}

\item[3.]{General regular and strictly regular boundary conditions}

\item[4.]{Matrix representation of $L_{bc}$ and its resolvent $R_{bc}(\lambda)$}

\item[5.]{Localization of spectra}

\item[6.]{Bari--Markus property in the case of strictly regular boundary conditions}

\item[7.]{Bari--Markus property in the case of regular but not
strictly regular boundary conditions}

\item[8.]{Miscellaneous; pointwise convergence and equiconvergence}

\end{enumerate}

\section{Introduction}

  Spectral theory of non-selfadjoint boundary value problems ($BVP$) for
ordinary  differential equations on a finite interval $I$ goes back
to the classical works of Birkhoff \cite{Bir1,Bir2} and Tamarkin
\cite{Tam1,Tam2,Tam3}. They introduced a concept of regular ($R$)
 boundary conditions ($bc$) and investigated asymptotic behavior
 of eigenvalues and eigenfunctions of such problems. Moreover, they
proved that the system of eigenfunctions and associated functions
($SEAF$) of a regular $BVP$ is complete.

   More subtle is the question whether $SEAF$ is a basis or an
unconditional basis in the Hilbert space $H^0 = L^2(I)$. N. Dunford
\cite{Du58} (see also \cite{DS71}),
V. P. Mikhailov \cite{Mi62}, G. M.  Keselman
\cite{Ke64} independently proved that the $SEAF$ is an
unconditional, or Riesz, basis if $bc$ are strictly regular ($SR$).
This property is lost if $bc$ are $R \setminus SR$, i.e., regular
but not strictly regular; unfortunately, this is just the case of
periodic ($Per^+$) and anti-periodic ($Per^-$) $bc.$  But A.  A.
Shkalikov \cite{Sh79, Sh82, Sh83} proved that in $R \setminus SR$
cases a proper chosen finite-dimensional projections form a Riesz
basis of projections.

Dirac operators
\begin{equation}
\label{i1}
Ly = i \begin{pmatrix}    1  & 0 \\  0 & -1
\end{pmatrix}
\frac{dY}{dx} + v(x) Y, \quad Y =
 \begin{pmatrix}    y_1 \\  y_2
\end{pmatrix}, \quad   v(x)=\begin{pmatrix}  0& P(x) \\ Q(x) & 0
\end{pmatrix}
\end{equation}
with $P, Q \in L^2 (I),$ and more general operators
\begin{equation}
\label{i2}
My = i B \frac{dY}{dx} + v(x) Y, \quad Y =(y_j (x))_1^d,
\end{equation}
where $B$ is a $d \times d $-matrix and $v(x) $ is
 a $d \times d$ matrix-valued $L^2 (I)$ function
 bring new difficulties. One of them comes from the fact that the values
of the resolvent  $(\lambda - L_{bc})^{-1}$ are not trace class operators.

  For general system (\ref{i2}) M. M. Malamud and L. L. Oridoroga  \cite{MalOr} gave
sufficient conditions for the completeness and minimality of the $SEAF$
in the case of  regular $BVP.$

The Riesz basis property for $2 \times 2 $  Dirac
operators (\ref{i1})  was proved by I. Trooshin and M. Yamamoto
\cite{TY01, TY02} in the case of separated $bc$  and $v \in L^2$.
S. Hassi and
L. L. Oridoroga \cite{HO09}
proved the Riesz basis property for (\ref{i2}) when $B =
\begin{pmatrix} a  & 0 \\  0  & -b   \end{pmatrix}, $
with $a,b >0,$  for separated  $bc$  and $v \in  C^1 (I).$

  B. Mityagin \cite{Mit03}, \cite[Theorem 8.8]{Mit04}  proved that {\em periodic (or
anti-periodic)} $bc$ give a rise of a Riesz system of 2D projections
(or 2D invariant subspaces) under the smoothness restriction $P, Q
\in H^\alpha, \; \alpha > 1/2,$ on the potentials $v$ in (\ref{i1}).
The authors removed that restriction in \cite{DM20}, where the same
result is obtained for {\em any} $L^2$ potential $v.$ This became possible
in the framework of the
 general approach
 to analysis of invariant (Riesz)
subspaces and their closeness to 2D subspaces of the free
operator developed and used by the authors in
\cite{DM3, DM5, DM6, DM7, DM15}.

Now we extend these results to Dirac operators with {\em any
regular} $bc,$
  which requires a careful analysis of
  {\em regular} and {\em strictly regular}
(a la Birkhoff-Tamarkin) $bc$
themselves -- Section 3 describes these $bc$ and
give explicit form of the $SEAF$ (Lemmas 5, 6, 7)
for SR and $R\setminus SR$ $bc$ in
the case
of the free Dirac operator. Section 2 reminds the elementary
geometry of Riesz bases
or Riesz systems of projections in a Hilbert space (see \cite{B,M,GK}).
 In Section 4 and 5 we
study  the analytic properties of the resolvent $R_{bc} (\lambda) =
( \lambda - L_{bc})^{-1}$ with $v \in L^2.$  $SR$ and $R\setminus
SR$ cases differ in some technical details, and Theorems 12 and 14
accordingly take care about localization of $L_{bc}$'s spectra. Now
(Sections 6, 7) the representation of projections as Cauchy--Riesz
integrals of the resolvent is used to get Bari--Markus property of
the Riesz system for $L_{bc}$. In the $SR$ case this leads (Theorem
15) to Riesz basis property of the $SEAF;$ in the $R\setminus SR$
case the system of 2D projections of root subspaces is a Riesz
system (Theorem 20).

\section{Technical preliminaries about Riesz systems of projections}

Here we recall some basic facts about Hilbert--Schmidt operators,
Riesz bases, etc. All Hilbert spaces that we consider are supposed
to be separable. \vspace{2mm}

1. {\em Hilbert--Schmidt operators.}

Let $H$ be a Hilbert space. A linear operator $T: H \to H$ is {\em
Hilbert--Schmidt operator} if its {\em Hilbert--Schmidt norm}
$\|T\|_{HS}$ is finite, where
\begin{equation}
\label{p1} \|T\|^2_{HS}: = \sum_{\gamma \in \Gamma} \|Te_\gamma \|^2
=\sum_{\gamma, \beta \in \Gamma} |\langle Te_\gamma,e_\beta \rangle
|^2
\end{equation}
with $(e_\gamma, \gamma \in \Gamma)$ being any orthonormal basis
(o.n.b.) in $H.$ The following lemma summarizes some of the
properties of Hilbert--Schmidt operators and Hilbert--Schmidt norm.

\begin{Lemma}
\label{lem1} Let $T: H \to H$ and $S: H \to H$ be linear
operators.

 (a) $\|T\|_{HS}$ in (\ref{p1}) does not depend on the choice of
o.n.b. $(e_\gamma).$

(b) $\|T\|_{HS}$ is a norm such that $\|T^*\|_{HS}=\|T\|_{HS}.$

(c) $\|T\| \leq \|T\|_{HS}$

(d) If $T$ is Hilbert--Schmidt and $S$ is bounded, then $ S T $ and
$ T S $ are Hilbert--Schmidt operators, and $$
\|ST\|_{HS},\|TS\|_{HS} \leq\|T\|_{HS} \cdot \|S\|$$

(e) Every Hilbert--Schmidt operator is compact.
\end{Lemma}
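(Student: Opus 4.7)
The plan is to handle the five statements in the natural logical order (a) $\Rightarrow$ (b) $\Rightarrow$ (c), then (d), then (e), since each later item leans on the earlier ones.

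For (a) I would fix two o.n.b.\ $(e_\gamma)$ and $(f_\beta)$ and apply Parseval twice. Writing $\|Te_\gamma\|^2=\sum_\beta|\langle Te_\gamma,f_\beta\rangle|^2$ and summing in $\gamma$, Fubini (all terms are non-negative) gives
\[
\sum_\gamma\|Te_\gamma\|^2=\sum_{\gamma,\beta}|\langle e_\gamma,T^*f_\beta\rangle|^2=\sum_\beta\|T^*f_\beta\|^2,
\]
and the same identity with $(e_\gamma)$ replaced by $(f_\beta)$ shows independence of the basis and also proves the symmetry $\|T\|_{HS}=\|T^*\|_{HS}$ needed in (b). The remaining parts of (b) are immediate: positive-definiteness and homogeneity are trivial, and the triangle inequality follows from Minkowski's inequality in $\ell^2(\Gamma)$ applied to the sequences $(\|Te_\gamma\|)$ and $(\|Se_\gamma\|)$ via the pointwise bound $\|(T+S)e_\gamma\|\le\|Te_\gamma\|+\|Se_\gamma\|$.

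For (c), given a unit vector $x\in H$ I would extend $\{x\}$ to an o.n.b.\ and use (a) to obtain $\|Tx\|^2\le\sum_\gamma\|Te_\gamma\|^2=\|T\|_{HS}^2$; taking the supremum over unit $x$ yields the claim. For (d) the estimate for $ST$ is immediate from $\|STe_\gamma\|\le\|S\|\cdot\|Te_\gamma\|$ upon squaring and summing. For $TS$ I would dualize: by (b), $\|TS\|_{HS}=\|(TS)^*\|_{HS}=\|S^*T^*\|_{HS}\le\|S^*\|\cdot\|T^*\|_{HS}=\|S\|\cdot\|T\|_{HS}$, which is the cleanest way to avoid re-expanding.

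Finally, for (e) I would exhibit $T$ as the operator-norm limit of a sequence of finite-rank operators. Fixing an o.n.b.\ $(e_\gamma)$ and any increasing exhaustion $F_n\uparrow\Gamma$ by finite sets, set $T_n x=\sum_{\gamma\in F_n}\langle x,e_\gamma\rangle\,Te_\gamma$; each $T_n$ has rank $\le |F_n|$. For $x\in H$, Cauchy--Schwarz gives
\[
\|(T-T_n)x\|\le\Bigl(\sum_{\gamma\notin F_n}|\langle x,e_\gamma\rangle|^2\Bigr)^{1/2}\Bigl(\sum_{\gamma\notin F_n}\|Te_\gamma\|^2\Bigr)^{1/2}\le\|x\|\cdot\varepsilon_n,
\]
where $\varepsilon_n^2=\sum_{\gamma\notin F_n}\|Te_\gamma\|^2\to 0$ by finiteness of $\|T\|_{HS}^2$. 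Hence $T_n\to T$ in operator norm, and $T$ is compact as a norm limit of finite-rank operators.

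There is no real obstacle here; the only subtle point is (a), where one must be careful to justify interchanging the order of summation of a doubly-indexed family, but this is harmless because every term is non-negative. Everything else is a direct application of Cauchy--Schwarz, Minkowski, and the identities established in the preceding parts.
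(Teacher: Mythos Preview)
Your argument is correct and follows the standard textbook route. Note, however, that the paper does not actually prove this lemma: after stating it, the authors write ``We refer to \cite{GK,RS} for proofs of these properties and more details about Hilbert--Schmidt operators.'' So there is no proof in the paper to compare against; what you have written is essentially the argument one finds in those references (Parseval for (a) and the $\|T\|_{HS}=\|T^*\|_{HS}$ identity, extension of a unit vector for (c), duality for the $TS$ half of (d), and finite-rank approximation for (e)).
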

We refer to \cite{GK,RS} for proofs of these properties and more
details about Hilbert--Schmidt operators.\vspace{2mm}

2. {\em Riesz bases.}

Let $H$ be a Hilbert space, and let $(e_\gamma, \, \gamma \in
\Gamma)$ be an o.n.b. in $H.$ If $A: H\to H$ is an automorphism,
then the system
\begin{equation}
\label{p2} f_\gamma = A e_\gamma, \quad  \gamma \in \Gamma,
\end{equation}
is an unconditional basis in $H.$ Indeed, for each $x\in H$ we have
$$ x= A(A^{-1} x )= A \left (\sum_\gamma \langle A^{-1} x,e_\gamma
\rangle e_\gamma \right) $$
$$= \sum_\gamma \langle x,(A^{-1})^*e_\gamma
\rangle f_\gamma =\sum_\gamma \langle x,\tilde{f}_\gamma \rangle
f_\gamma,$$ so $(f_\gamma)$ is a basis, and its biorthogonal
system is
\begin{equation}
\label{p2a} \tilde{f}_\gamma =(A^{-1})^* e_\gamma, \quad \gamma \in
\Gamma.
\end{equation}
Moreover, it follows that
\begin{equation} \label{p3} 0< c \leq
\|f_\gamma\| \leq  C, \quad m^2\|x\|^2 \leq \sum_\gamma |\langle
x,\tilde{f}_\gamma \rangle|^2 \|f_\gamma\|^2 \leq M^2 \|x\|^2,
\end{equation}
with $ c= 1/\|A^{-1}\|, \; C=\|A\|,  \; M= \|A\|\cdot \|A^{-1}\|$
and $m=1/M.$

A basis of the form (\ref{p2}) is called {\em Riesz basis.} One can
easily see that the property (\ref{p3}) characterizes Riesz bases,
i.e., a basis $(f_\gamma)$ is a Riesz bases if and only if
(\ref{p3}) holds with some constants $C\geq c>0$ and $M \geq m >0.$
Another characterization of Riesz bases gives the following
assertion (see \cite[Chapter 6, Section 5.3, Theorem 5.2]{GK}):  {\em If
$(f_\gamma)$ is a normalized  basis (i.e., $\|f_\gamma\|=1 \;
\forall \gamma $), then it is a Riesz basis if and only if it is
unconditional.}

Let $(f_\gamma)$ be a fixed Riesz basis in $H$ of the form
(\ref{p2}). For each Hilbert--Schmidt operator $T$  we consider
\begin{equation}
\label{p4}  \|T\|^*_{HS} = \left (\sum_{\gamma, \beta} |\langle
Tf_\gamma, \tilde{f}_\beta \rangle|^2 \right )^{1/2}.
\end{equation}

Then  $\|T\|^*_{HS}$ is a norm which is equivalent to
$\|T\|_{HS}.$ Indeed, in view of (\ref{p1})--(\ref{p2a}),
$$(\|T\|^*_{HS})^2=\sum_{\gamma, \beta} |\langle TA e_\gamma, (A^{-1})^*
e_\beta \rangle|^2 $$
$$=\sum_{\gamma, \beta} |\langle A^{-1}TA e_\gamma,
e_\beta \rangle|^2 =\|A^{-1}TA\|^2_{HS}.$$ Therefore, in view of
Lemma \ref{lem1},
$$ \|T\|^*_{HS}=\|A^{-1}TA\|_{HS}\leq  M \|T\|_{HS}$$ with $M=
\|A\|\cdot \|A^{-1}\|.$ On the other hand, by the same argument,
$$ \|T\|_{HS} =\|A(A^{-1}TA)A^{-1}\|_{HS} \leq M
\|A^{-1}TA\|_{HS}=M\|T\|^*_{HS}. $$ \vspace{2mm}

3. {\em Riesz bases of projections and Bari--Markus Theorem}

Let $H$ be a Hilbert space. A family of bounded finite--dimensional
projections $\{P_\gamma: H\to H , \, \gamma \in \Gamma\}$ is called
{\em unconditional basis of projections} if the following conditions
hold:
\begin{align}
P_\alpha P_\beta =0  \quad \text{if} \;\; \alpha \neq \beta,
\quad P_\alpha^2 = P_\alpha; \label{p5}\\
x= \sum_{\gamma \in \Gamma} P_\gamma (x) \quad \forall x \in H,\label{p6}
\end{align}
where the series converge unconditionally.

Obviously, if $(f_\gamma) $ is an unconditionsl basis in $H$ then the system of
one--dimensional projections $P_\gamma (x) = \tilde{f}_\gamma (x)
f_\gamma$ is a basis of projections in $H, $ and vice versa, every
basis of one dimensional projections can be obtained in that way
from some basis.

If $(Q_\gamma)$ is a basis of orthogonal projections (i.e.,
$Q_\gamma^* = Q_\gamma $),  the Pythagorian theorem implies  $
\sum_\gamma \|Q_\gamma x\|^2 = \|x\|^2. $

 We say
that the family of projections $(P^0_\gamma, \, \gamma\in \Gamma)$
is a {\em Riesz basis of projections} if
\begin{equation}
\label{p10} P^0_\gamma = A Q_\gamma A^{-1}, \quad \gamma\in \Gamma,
\end{equation}
where $A:H\to H $ is an isomorphism and $(Q_\gamma, \, \gamma\in
\Gamma)$ is a basis of orthogonal projections.

If (\ref{p10}) holds, then
\begin{equation}
\label{p11} \sum_\gamma \|P^0_\gamma x \|^2  \leq \|A\|^2
\sum_\gamma \|Q_\gamma A^{-1}x\|^2 = \|A\|^2 \| A^{-1} x\|^2  \leq M^2
\|x\|^2
\end{equation}
with $M= \|A\| \|A^{-1}\|.$

The following statement is a version of the Bari-Markus theorem (see
\cite{GK}, Ch.6, Sect. 5.3, Theorem 5.2).

\begin{Theorem}
\label{thm2} Suppose that $(P_\gamma,\, \gamma\in \Gamma)$  is a
family of bounded finite dimensional projections in a Hilbert space
$H$ such that
\begin{equation}
\label{p14} P_\alpha P_\beta =0  \quad \text{if} \;\; \alpha \neq
\beta.
\end{equation}
If there is a Riesz basis of projections $(P_\gamma^0,\, \gamma\in
\Gamma)$ such that
\begin{equation}
\label{p15}  \dim P_\gamma = \dim P_\gamma^0, \quad   \gamma \in
\Gamma,
\end{equation}
and
\begin{equation}
\label{p16} \sum_{\gamma\in \Gamma} \| P_\gamma -P^0_\gamma\|^2 <
\infty,
\end{equation}
then $(P_\gamma)$ is a Riesz basis of projections in $H.$
\end{Theorem}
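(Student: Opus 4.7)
The plan is to construct a bounded invertible operator $V:H \to H$ that intertwines an orthogonal basis of projections with the given family $(P_\gamma)$, so that setting $P_\gamma = V Q_\gamma V^{-1}$ realises the defining relation (\ref{p10}) for a Riesz basis of projections. I proceed in four stages.

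\textbf{Stage 1 (reduction).} Writing $P^0_\gamma = A Q_\gamma A^{-1}$ as in (\ref{p10}) and replacing each $P_\gamma$ by $A^{-1} P_\gamma A$, I preserve the biorthogonality (\ref{p14}), the dimension condition (\ref{p15}), and (up to the factor $\|A\|^2\|A^{-1}\|^2$) the summability (\ref{p16}); a Riesz basis representation of the conjugated system transports back via $A$. It therefore suffices to treat the case $P^0_\gamma = Q_\gamma$ orthogonal.

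\textbf{Stage 2 (the candidate operator).} Define $V = \sum_\gamma P_\gamma Q_\gamma$, so that $V - I = \sum_\gamma (P_\gamma - Q_\gamma) Q_\gamma =: K$. Assembling an orthonormal basis of $H$ from orthonormal bases of each subspace $Q_\gamma H$ and evaluating $K$ on those vectors yields
\[
\|K\|_{HS}^2 \;\leq\; \sum_\gamma \dim(Q_\gamma)\,\|P_\gamma - Q_\gamma\|^2,
\]
which is finite by (\ref{p16}) together with uniform boundedness of $\dim Q_\gamma$ (in the applications the projections are one- or two-dimensional). By Lemma \ref{lem1}, $K$ is Hilbert--Schmidt and hence compact, and $V = I+K$ is bounded. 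A direct calculation using (\ref{p14}) and the mutual orthogonality of the $Q_\gamma$'s gives the intertwining $V Q_\gamma = P_\gamma Q_\gamma = P_\gamma V$; consequently, invertibility of $V$ will at once produce $P_\gamma = V Q_\gamma V^{-1}$ and finish the proof.

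\textbf{Stage 3 (localising the kernel).} Being of the form identity plus compact, $V$ is Fredholm of index zero, so it suffices to prove $V$ is injective. Let $F = \{\gamma:\ \|P_\gamma - Q_\gamma\| \geq 1/2\}$, which is finite by (\ref{p16}). For $\gamma \notin F$, the restriction $P_\gamma|_{Q_\gamma H}$ is injective (if $P_\gamma y = 0$ for $y \in Q_\gamma H$, then $y = (Q_\gamma - P_\gamma)y$ forces $\|y\| \leq \tfrac12\|y\|$), and hence, by (\ref{p15}), an isomorphism onto $P_\gamma H$. If $Vx = 0$, then $0 = P_\gamma V x = P_\gamma Q_\gamma x$, so $Q_\gamma x = 0$ for every $\gamma \notin F$; thus $\ker V \subseteq H_F := \bigoplus_{\gamma \in F} Q_\gamma H$, a finite-dimensional subspace.

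\textbf{Stage 4 (finite-dimensional correction).} This is the heart of the argument, since $V|_{H_F}$ may genuinely fail to be injective. To repair this without destroying the intertwining, pick any linear isomorphism $\Phi: H_F \to \bigoplus_{\gamma \in F} P_\gamma H$ that sends $Q_\gamma H$ onto $P_\gamma H$ for each $\gamma \in F$ (possible by (\ref{p15})) and set $\tilde V = V(I - \Pi_F) + \Phi\,\Pi_F$, where $\Pi_F = \sum_{\gamma \in F} Q_\gamma$. Since $\tilde V - V$ is finite rank, $\tilde V$ is still identity plus compact, hence Fredholm of index zero. A case-by-case verification ($\gamma \in F$ versus $\gamma \notin F$), using (\ref{p14}) and the block structure of $\Phi$, shows $\tilde V Q_\gamma = P_\gamma \tilde V$ for every $\gamma$. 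Repeating Stage~3 with the injectivity of $\Phi$ on each $Q_\gamma H$ ($\gamma \in F$) replacing that of $P_\gamma|_{Q_\gamma H}$ now forces $Q_\gamma x = 0$ for \emph{all} $\gamma$, so that $x = \sum_\gamma Q_\gamma x = 0$. Thus $\tilde V$ is invertible, and $P_\gamma = \tilde V Q_\gamma \tilde V^{-1}$ exhibits $(P_\gamma)$ as a Riesz basis of projections. The main obstacle is precisely this stage: the essential point is to arrange $\Phi$ to respect the block decomposition of $H_F$ into the $Q_\gamma H$, so that the biorthogonality of the $P_\gamma$'s and the chosen isomorphisms mesh into a single intertwining relation.
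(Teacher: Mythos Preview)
Your strategy is close in spirit to the paper's: both build an intertwining operator of the form ``identity plus small/compact perturbation'' and then repair it on a finite-dimensional block. The paper, however, bypasses Hilbert--Schmidt norms and Fredholm theory entirely. It first fixes a finite set $\Gamma_1$ with $\sum_{\Gamma\setminus\Gamma_1}\|P_\gamma-P_\gamma^0\|^2 M^2<1/4$, and bounds the \emph{operator} norm of $T=\sum_{\Gamma\setminus\Gamma_1}(P_\gamma-P_\gamma^0)P_\gamma^0$ directly via Cauchy--Schwarz and the Riesz inequality (\ref{p11}), so that $B=I+T$ is invertible by the Neumann series. The finite-dimensional correction is then the same as your Stage~4. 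Your Fredholm route is more conceptual; the paper's is shorter and, crucially, works under the stated hypotheses without change.

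Your Stage~2 has a real gap relative to Theorem~\ref{thm2} as stated: the bound $\|K\|_{HS}^2\le\sum_\gamma\dim(Q_\gamma)\,\|P_\gamma-Q_\gamma\|^2$ needs a uniform bound on $\dim Q_\gamma$, which is \emph{not} among the hypotheses (only finite-dimensionality of each $P_\gamma$ is assumed). Saying ``in the applications the projections are one- or two-dimensional'' does not prove the theorem. The repair is immediate and actually brings you back to the paper's estimate: each summand $(P_\gamma-Q_\gamma)Q_\gamma$ has finite rank, and for any finite $F\subset\Gamma$ Cauchy--Schwarz plus orthogonality of the $Q_\gamma$ give
\[
\Bigl\|\sum_{\gamma\notin F}(P_\gamma-Q_\gamma)Q_\gamma\,x\Bigr\|
\le\Bigl(\sum_{\gamma\notin F}\|P_\gamma-Q_\gamma\|^2\Bigr)^{1/2}
\Bigl(\sum_{\gamma}\|Q_\gamma x\|^2\Bigr)^{1/2},
\]
so $K$ is a norm limit of finite-rank operators and hence compact; your Stages~3--4 then go through verbatim. (Equivalently, once you have this tail estimate you can choose $F$ so that the tail has norm $<1$, get invertibility on the complement outright, and drop Fredholm theory --- which is exactly the paper's argument.)
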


\begin{proof} Let the projections $P_\gamma^0$
be given by (\ref{p10}). In view of (\ref{p16}), there is a finite
subset $\Gamma_1 \subset \Gamma $ such that

\begin{equation}
\label{p17} \sum_{\Gamma \setminus \Gamma_1} \| P_\gamma
-P_\gamma^0\|^2 M^2 < \frac{1}{4},
\end{equation}
where the constant $M= \|A^{-1}\|\|A\|$ comes from (\ref{p11}).
Consider the operators
$$
T x= \sum_{\Gamma\setminus \Gamma_1} (P_\gamma -P^0_\gamma)P^0_\gamma
x, \quad Bx = \sum_{\Gamma_1} P^0_\gamma x + \sum_{\Gamma\setminus
\Gamma_1} P_\gamma P^0_\gamma x = x+ Tx.
$$
In view of (\ref{p11}) and (\ref{p17}), the Cauchy inequality yields
$$
\|Tx\|^2 \leq \left ( \sum_{\Gamma\setminus \Gamma_1} \|P_\gamma
-P^0_\gamma\| \|P^0_\gamma x \| \right )^2 \leq \sum_{\Gamma\setminus
\Gamma_1} \|P_\gamma -P^0_\gamma\|^2 \sum_{\Gamma\setminus \Gamma_1}
\|P^0_\gamma x\|^2 \leq \frac{1}{2} \|x\|^2.
$$
Therefore $ \|T\|< 1/2, $ which implies that $B:H\to H$ is an
isomorphism.

By the construction of the operator $B,$ if ${\alpha \in
\Gamma\setminus \Gamma_1}$ then  $B$ coincides on the subspace
$P_\alpha^0 (H)$ with the projection $P_\alpha,$ i.e.,
$$
BP_\alpha^0 x = P_\alpha P_\alpha^0 x \quad \text{for} \quad \alpha
\in \Gamma\setminus \Gamma_1, \;\; x\in H.
$$
Since  $\dim P_\alpha^0 (H)= \dim P_\alpha (H) <\infty,$  it follows that $B$
maps $ P_\alpha^0 (H)$ onto $ P_\alpha (H),$  which yields  $P_\alpha
= BP_\alpha^0 B^{-1}$ for $\alpha \in \Gamma\setminus \Gamma_1. $

Let $H^0_1, \, H^0_2, \, H_1, \, H_2 $ be, respectively,  the closed
linear spans of  $$\bigcup_{\Gamma_1} P_\gamma^0 (H), \quad
\bigcup_{\Gamma\setminus \Gamma_1} P_\gamma^0 (H), \quad
\bigcup_{\Gamma_1} P_\gamma (H), \quad \bigcup_{\Gamma\setminus
\Gamma_1} P_\gamma (H);
$$
then $H=H^0_1 \oplus H^0_2 = H_1 \oplus H_2,$ and
$B(H^0_2)=H_2.$ Since $\dim P_\gamma^0 (H)= \dim P_\gamma (H), $
there exists an isomorphism
$\tilde{B} : H \to H $ such that $\tilde{B} = B$  on $H^0_2 $ and
$\tilde{B} $ maps $P_\gamma^0 (H)$ onto $P_\gamma (H)$ for every $
\gamma.$ Thus, by (\ref{p10}) we obtain
$$
P_\gamma = \tilde{B} P_\gamma^0 \tilde{B}^{-1} =\tilde{B} AQ_\gamma
A^{-1} \tilde{B}^{-1}=(\tilde{B} A)Q_\gamma  (\tilde{B}A)^{-1}, \quad
\gamma \in \Gamma,
$$
which proves that $(P_\gamma ) $ is a Riesz basis of projections.
\end{proof}

\section{General regular and strictly regular boundary conditions}

We consider the Dirac operators $L=L(v)$ given by
(\ref{i1}) on the interval $I=[0,\pi] $ and set $L^0 = L(0).$  
In the following, the Hilbert space $L^2 (I, \mathbb{C}^2) $ is
regarded equipped with the scalar product
\begin{equation}
\label{0} \left \langle \begin{pmatrix} f_1 \\ f_2
\end{pmatrix},\begin{pmatrix} g_1 \\ g_2  \end{pmatrix}  \right
\rangle =\frac{1}{\pi} \int_0^\pi \left (f_1 (x) \overline{g_1 (x)}
+ f_2 (x) \overline{g_2 (x)} \right ) dx.
\end{equation}

1. A general boundary condition for the operator $L^0 $ (or $L$)
is given by a system of two linear equations
\begin{eqnarray}
\label{1} a_1 y_1 (0) +b_1 y_1 (\pi) + a_2 y_2 (0) + b_2 y_2
(\pi)=0  \\ \nonumber c_1 y_1 (0) +d_1 y_1 (\pi) + c_2 y_2 (0) +
d_2 y_2 (\pi)=0
\end{eqnarray}
Let  $A_{ij}$  denote the  $2\times 2$ matrix
 formed by the $i$-th and $j$-th columns of the
matrix
\begin{equation}
\label{2} \left [
\begin{array}{cccc}
a_1 & b_1 & a_2 & b_2\\ c_1 & d_1& c_2 & d_2
\end{array}
\right ],
\end{equation}
and let $|A_{ij}|$ denote the determinant of the matrix $A_{ij}.$
Each solution of the equation
\begin{equation}
\label{2a} L^0 y =\lambda y,\quad y=\begin{pmatrix} y_1\\y_2
\end{pmatrix}
\end{equation}
has the form
\begin{equation}
\label{3} y =\begin{pmatrix} \xi e^{-i\lambda x}\\ \eta
e^{i\lambda x}
\end{pmatrix}.
\end{equation}
It satisfies the boundary condition (\ref{1}) if and only if $(\xi,
\eta)$ is a solution of the system of two linear equations
\begin{eqnarray}
\label{4} \xi (a_1 + b_1 z^{-1} ) + \eta (a_2 + b_2 z) =0  \\
\nonumber \xi (c_1 + d_1 z^{-1} ) + \eta (c_2 + d_2 z) =0
\end{eqnarray}
where $z= \exp (i\pi\lambda).$  Therefore, we have a non-zero
solution $y$
 if and only if the determinant of (\ref{4}) is
zero, which is equivalent to the quadratic equation
\begin{equation}
\label{5} |A_{14}| z^2 + (|A_{13}| + |A_{24}|) z + |A_{23}|=0.
\end{equation}
\begin{Definition}
The boundary condition (\ref{1}) is called: {\bf regular} if
\begin{equation}
\label{6} |A_{14}| \neq 0, \quad |A_{23}| \neq 0,
\end{equation}
and {\bf strictly regular} if additionally
\begin{equation}
\label{7} (|A_{13}|+|A_{24}|)^2 \neq 4|A_{14}| |A_{23}|
\end{equation}
holds.
\end{Definition}

Of course, (\ref{7}) is equivalent to saying that the quadratic
equation (\ref{5}) has two {\em distinct} roots.

From now on we consider only regular boundary conditions. We multiply
from the left the system (\ref{1}) and the $2\times 4$ matrix
(\ref{2}) by the matrix $A^{-1}_{14}. $ This gives us an equivalent
to (\ref{1}) system
\begin{eqnarray}
\label{8a}  y_1 (0) +b y_1 (\pi) + a y_2 (0) =0
\\ \nonumber d y_1 (\pi) + c y_2 (0) +  y_2 (\pi)=0
\end{eqnarray}
which matrix has the form
\begin{equation}
\label{8} \left [
\begin{array}{cccc}
1 & b & a & 0\\ 0 & d& c & 1
\end{array}
\right ],
\end{equation}
and $\left [ \begin{array}{cc}  b & a \\  d& c
\end{array}
\right ] = A_{14}^{-1}A_{23}. $

In the following we consider only boundary conditions in the form
(\ref{8a}) with matrices (\ref{8}). Then
\begin{equation}
\label{9} |A_{14}|=1, \quad |A_{13}|=c, \quad |A_{24}|=b,\quad
|A_{23}| = bc-ad.
\end{equation}
Condition (\ref{6}) means that
\begin{equation}
\label{10}  |A_{23}| = bc-ad \neq 0,
\end{equation}
and (\ref{7}) takes the form
\begin{equation}
\label{11} (b-c)^2 +4ad \neq 0.
\end{equation}

Now the system (\ref{4}) becomes
\begin{eqnarray}
\label{12} \xi (1 + b z^{-1} ) + \eta a=0  \\ \nonumber \xi d
z^{-1}  + \eta (c +  z) =0
\end{eqnarray}
and the equation (\ref{5}) becomes
\begin{equation}
\label{13} z^2 + (b+c)z + bc-ad =0.
\end{equation}

Notice that (\ref{12}) means
\begin{equation}
\label{19} \left [
\begin{array}{cc}
1+b/z &a\\ d/z &  c+z \end{array} \right ]
\begin{pmatrix} \xi \\ \eta\end{pmatrix} =  \left [
\begin{array}{cc}
z+b &a\\ d &  c+z \end{array} \right ]
\begin{pmatrix} \xi/z \\ \eta\end{pmatrix}
=0.
\end{equation}
From here (or, since the change of variable $ z \to -w $ transforms
(\ref{13}) into the characteristic equation of the matrix $A_{23}$)
we get the following.

\begin{Lemma}
\label{lem2} The number $z$ is a root of (\ref{13}) if and only if
$-z$ is an eigenvalue of the matrix $A_{23} =\left [
\begin{array}{cc} b &a\\ d &  c \end{array} \right ].$
Moreover, $\begin{pmatrix} \xi \\ \eta\end{pmatrix}$ is a non-zero
solution of (\ref{12}) if and only if $-z$ is an eigenvalue
of the matrix $A_{23}$ and $\begin{pmatrix} \xi/z \\
\eta\end{pmatrix}$ is an eigenvector of $A_{23} $ corresponding to
$-z.$
\end{Lemma}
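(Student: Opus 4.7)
My plan is to verify both assertions by direct calculation, since the lemma is essentially an algebraic reformulation.

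For the first claim, I would compute the characteristic polynomial of $A_{23}$ directly:
\[
\det(wI - A_{23}) = (w-b)(w-c) - ad = w^2 - (b+c)w + (bc - ad).
\]
Substituting $w = -z$ gives $z^2 + (b+c)z + (bc-ad)$, which is exactly the left-hand side of (\ref{13}). Hence $w$ is an eigenvalue of $A_{23}$ if and only if $-w$ is a root of (\ref{13}), proving the first equivalence.

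For the second claim, I would start from the identity (\ref{19}) already displayed in the text, which rewrites the system (\ref{12}) in the form
\[
\begin{pmatrix} z+b & a \\ d & c+z \end{pmatrix}\begin{pmatrix} \xi/z \\ \eta \end{pmatrix} = 0,
\]
i.e., $(A_{23} + zI)\bigl(\xi/z,\eta\bigr)^{\top} = 0$. This says precisely that $(\xi/z,\eta)^{\top}$ lies in the kernel of $A_{23} - (-z)I$, which is the eigenvector condition. The equivalence is immediate in both directions provided $z \neq 0$, and $z=0$ is excluded here since $z = e^{i\pi\lambda}$ never vanishes (and the regularity hypothesis (\ref{10}) forces $bc - ad \neq 0$, ruling out $z=0$ as a root of (\ref{13}) as well).

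There is no real obstacle; the lemma is a bookkeeping observation whose content is just that the substitution $w \mapsto -z$ turns the eigenvalue problem for $A_{23}$ into (\ref{13}) and (\ref{12}). The reason for stating it is to have at hand, for later sections, the explicit description of the roots of (\ref{13}) and the associated vectors $(\xi,\eta)$ in terms of the spectrum of $A_{23}$.
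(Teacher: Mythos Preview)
Your proof is correct and matches the paper's approach exactly. The paper does not give a formal proof of this lemma; it simply states, in the sentence preceding the lemma, ``From here (or, since the change of variable $z \to -w$ transforms (\ref{13}) into the characteristic equation of the matrix $A_{23}$) we get the following,'' where ``from here'' refers to the identity (\ref{19}) --- precisely the two observations you have spelled out in detail.
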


2. {\em Strictly Regular boundary conditions.} In this case the
conditions (\ref{10}) and (\ref{11}) guarantee that quadratic
equation (\ref{13}) has two distinct nonzero roots $z_1$ and $z_2,$
so the matrix $A_{23}$ has two {\em distinct} eigenvalues $-z_1,
-z_2.$ Let us fix a pair of corresponding eigenvectors
$\begin{pmatrix} \alpha_1 \\ \alpha_2\end{pmatrix}$ and $
\begin{pmatrix} \beta_1 \\ \beta_2 \end{pmatrix}. $ Then
\begin{equation}
\label{20}
\begin{pmatrix} \alpha_1 \\ \alpha_2\end{pmatrix}  \quad and \quad
\begin{pmatrix} \beta_1 \\ \beta_2 \end{pmatrix} \quad are \: linearly \;
independent
\end{equation}
because $z_1 \neq z_2. $ Therefore, the matrix $\begin{pmatrix}
\alpha_1 & \beta_1\\ \alpha_2 & \beta_2
\end{pmatrix}$ is invertible; we set
\begin{equation}
\label{20a}
\begin{pmatrix} \alpha^\prime_1 & \alpha^\prime_2\\
\beta^\prime_1 & \beta^\prime_2 \end{pmatrix} := \begin{pmatrix} \alpha_1 & \beta_1\\
\alpha_2 & \beta_2 \end{pmatrix}^{-1}.
\end{equation}
Moreover, in view of Lemma \ref{lem2}, the vectors $\begin{pmatrix}
\alpha_1 z_1
\\ \alpha_2 \end{pmatrix}$ and $
\begin{pmatrix} \beta_1z_2 \\ \beta_2 \end{pmatrix} $
are solutions of the system (\ref{12}).

Let $\tau_1$ and $ \tau_2 $ be chosen so that
\begin{equation}
\label{14} z_1 = e^{i\pi \tau_1}, \quad z_2 = e^{i\pi \tau_2}
\end{equation}
and
\begin{equation}
\label{15}  |Re\, \tau_1 - Re\, \tau_2| \leq 1, \quad |Re \, \tau_1| \leq 1.
\end{equation}
Then we have
\begin{equation}
\label{17} z_1 = e^{i\pi \lambda} \quad  \Leftrightarrow \quad
\lambda = \tau_1 + k, \; k \in 2\mathbb{Z}
\end{equation}
and
\begin{equation}
\label{18} z_2 = e^{i\pi \lambda} \quad  \Leftrightarrow \quad
\lambda = \tau_2 + m, \; m \in 2\mathbb{Z}.
\end{equation}
The right--hand sides of (\ref{17}) and (\ref{18}) give all
eigenvalues of $L^0.$
 For each $\lambda $ in the two infinite series given
by (\ref{17}) and (\ref{18}) we have an eigenvector of $L^0$ of the
form (\ref{3}) with $$\begin{pmatrix} \xi
\\ \eta \end{pmatrix}=  \begin{pmatrix} \alpha_1z_1
\\ \alpha_2 \end{pmatrix} \quad \text{if} \;\lambda = \tau_1 +m, \quad
\begin{pmatrix} \xi
\\ \eta \end{pmatrix}=  \begin{pmatrix} \beta_1 z_2
\\ \beta_2 \end{pmatrix} \quad \text{if} \; \lambda = \tau_2 +m. $$

Thus, the operator $L^0$ subject to the boundary conditions
(\ref{2a}) with matrix (\ref{8}), has the following two series of
eigenvectors:
\begin{equation}
\label{21}\Phi^1 =\{\varphi^1_k, \; k \in 2\mathbb{Z} \}, \qquad
\varphi^1_k :=\begin{pmatrix} z_1 \alpha_1 e^{-i (\tau_1+k)x}
\\  \alpha_2 e^{i (\tau_1+k)x} \end{pmatrix}=
\begin{pmatrix}  \alpha_1 e^{i \tau_1 (\pi-x)} e^{-ikx}
\\  \alpha_2 e^{i \tau_1 x} e^{ikx}\end{pmatrix}
\end{equation}
and
\begin{equation}
\label{22}\Phi^2 =\{\varphi^2_m, \; m \in 2\mathbb{Z} \}, \qquad
\varphi^2_m :=\begin{pmatrix} z_2 \beta_1 e^{-i (\tau_2+m)x}
\\  \beta_2 e^{i (\tau_2+m)x} \end{pmatrix}=
\begin{pmatrix}  \beta_1 e^{i \tau_2 (\pi-x)} e^{-imx}
\\  \beta_2 e^{i \tau_2 x} e^{imx}\end{pmatrix}
\end{equation}

\begin{Lemma}
\label{lem3} The system $\Phi =\Phi^1 \cup \Phi^2$ is a Riesz basis
in the space  $L^2 (I, \mathbb{C}^2), \; I= [0,\pi].$ Its
biorthogonal system is $\tilde{\Phi} =\tilde{\Phi}^{1} \cup
\tilde{\Phi}^{2},$ where
\begin{equation}
\label{21*}\tilde{\Phi}^{1} =\{\tilde{\varphi}^1_k, \; k \in
2\mathbb{Z} \}, \qquad \tilde{\varphi}^1_k :=
\begin{pmatrix}  \overline{\alpha^\prime_1} e^{i \overline{\tau_1} (\pi-x)} e^{-ikx}
\\  \overline{\alpha^\prime_2} e^{i \overline{\tau_1} x} e^{ikx}\end{pmatrix},
\end{equation}
and
\begin{equation}
\label{22*}\tilde{\Phi}^{2} =\{\tilde{\varphi}^2_m, \; m \in
2\mathbb{Z} \}, \qquad \tilde{\varphi}^2_m :=
\begin{pmatrix}  \overline{\beta^\prime_1} e^{i \overline{\tau_2} (\pi-x)} e^{-imx}
\\ \overline{\beta^\prime_2} e^{i \overline{\tau_2} x} e^{imx}\end{pmatrix},
\end{equation}
with $\alpha_1^\prime, \alpha_2^\prime, \beta_1^\prime, \beta_2^\prime $   coming from (\ref{20a}).
\end{Lemma}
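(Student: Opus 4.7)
The plan is to exhibit a bounded automorphism $A$ on $L^2(I,\mathbb{C}^2)$ sending a specific orthonormal basis to $\Phi$; then $\Phi$ is a Riesz basis by definition (\ref{p2}). Take the o.n.b. $u_k(x)=(e^{-ikx},0)^T$ and $v_k(x)=(0,e^{ikx})^T$ for $k\in 2\mathbb{Z}$, which is orthonormal under the normalized inner product (\ref{0}). The difficulty is that $\varphi^1_k$ mixes frequencies $-k$ (in the first coordinate) and $+k$ (in the second), so a pointwise matrix multiplication alone cannot map $u_k,v_k$ to the target. The remedy is to couple the two coordinates through the reflection
\[
R: L^2(I)\to L^2(I),\qquad (R\phi)(x)=\phi(\pi-x),
\]
which is unitary, satisfies $R^2=I$, and gives $R(e^{\pm ikx})=e^{\mp ikx}$ for $k\in 2\mathbb{Z}$.

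Next, I would define
\[
A\begin{pmatrix}f\\ g\end{pmatrix}:=\begin{pmatrix}\alpha_1 e^{i\tau_1(\pi-x)}f+\beta_1 e^{i\tau_2(\pi-x)}Rg\\ \alpha_2 e^{i\tau_1 x}Rf+\beta_2 e^{i\tau_2 x}g\end{pmatrix}.
\]
A direct check using $R(e^{\pm ikx})=e^{\mp ikx}$ confirms $Au_k=\varphi^1_k$ and $Av_k=\varphi^2_k$. Boundedness of $A$ is immediate, each coefficient being multiplication by a function bounded on the compact interval $I$ (and $R$ is unitary). For invertibility I would solve $A(f,g)^T=(y_1,y_2)^T$ explicitly: setting $p=e^{i\tau_1 x}Rf$ and $q=e^{i\tau_2 x}g$ and applying $R$ to the first component equation reduces the system to the pointwise linear system
\[
\begin{pmatrix}\alpha_1 & \beta_1\\ \alpha_2 & \beta_2\end{pmatrix}\begin{pmatrix}p\\ q\end{pmatrix}=\begin{pmatrix}Ry_1\\ y_2\end{pmatrix},
\]
whose matrix is invertible by (\ref{20}), with inverse given by (\ref{20a}). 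Solving for $p,q$ and then recovering $f,g$ via the bounded operators $e^{-i\tau_j x}$, $e^{-i\tau_j(\pi-x)}$ and $R$ furnishes an explicit bounded $A^{-1}$; hence $\Phi$ is a Riesz basis.

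Finally, to identify $\tilde\Phi$ as the biorthogonal system, I would compute $\langle\varphi^i_k,\tilde\varphi^j_m\rangle$ directly for all four choices of $i,j\in\{1,2\}$. In the diagonal cases $i=j$, the $\tau$-factors from $\varphi^i_k$ and the conjugates of the $\overline{\tau_i}$-factors from $\tilde\varphi^i_m$ cancel exactly, leaving $\frac{1}{\pi}\int_0^\pi(\alpha_1\alpha_1'e^{i(m-k)x}+\alpha_2\alpha_2'e^{i(k-m)x})dx=(\alpha_1\alpha_1'+\alpha_2\alpha_2')\delta_{km}=\delta_{km}$, the scalar prefactor being the $(1,1)$-entry of $M^{-1}M=I$ with $M=\begin{pmatrix}\alpha_1&\beta_1\\ \alpha_2&\beta_2\end{pmatrix}$ from (\ref{20a}); the $i=j=2$ case is analogous. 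In the off-diagonal cases $i\neq j$, the two surviving integrands differ by factors $e^{i(\tau_1-\tau_2)(\pi-x)}$ versus $e^{i(\tau_1-\tau_2)x}$, but the substitution $y=\pi-x$ (together with $k-m\in 2\mathbb{Z}$, which eliminates the phase $e^{i(k-m)\pi}$) makes them coincide, and the common scalar prefactor equals $\alpha_1'\beta_1+\alpha_2'\beta_2=0$ (the $(1,2)$-entry of $M^{-1}M$), forcing the inner product to vanish. The main obstacle is the construction of $A$: the naive pointwise matrix multiplication fails because of the opposite signs of the frequency $k$ in the two coordinates of $\varphi^j_k$, and the key device is the off-diagonal appearance of $R$ in the definition of $A$.
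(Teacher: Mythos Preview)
Your proof is correct and follows essentially the same route as the paper: you construct the same bounded automorphism $A$ (up to a cosmetic relabeling of the orthonormal basis and where the reflection $R$ sits) to show $\Phi$ is a Riesz basis, and you invert it via the same pointwise $2\times 2$ system governed by (\ref{20a}). The only minor variation is that the paper identifies $\tilde{\Phi}$ by computing $(A^{-1})^*$ explicitly and applying it to the orthonormal basis, whereas you verify the biorthogonality relations directly; both are valid, and your small slip in calling $\beta_1'\alpha_1+\beta_2'\alpha_2$ the $(1,2)$-entry rather than the $(2,1)$-entry of $M^{-1}M$ is harmless since both off-diagonal entries vanish.
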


\begin{proof}
The system $E=E^1 \cup E^2, $ where
\begin{equation}
\label{23} E^\nu =\{e^\nu_k, \; k \in 2\mathbb{Z} \}; 
\;\; \nu = 1,2; 
\qquad e^1_k :=\begin{pmatrix} e^{i kx}\\
0
\end{pmatrix}, \quad  e^2_m :=\begin{pmatrix} 0
\\   e^{i mx} \end{pmatrix},
\end{equation}
is an orthonormal basis in $L^2 (I, \mathbb{C}^2).$

Consider the operator $A: L^2 (I, \mathbb{C}^2) \to L^2 (I,
\mathbb{C}^2)$ defined by
\begin{equation}
\label{24} A
\begin{pmatrix}  f\\ g \end{pmatrix} =
\begin{pmatrix}  \alpha_1 e^{i \tau_1 (\pi-x)} f(\pi -x)
\\  \alpha_2 e^{i \tau_1 x} f(x) \end{pmatrix}
+ \begin{pmatrix}  \beta_1 e^{i \tau_2 (\pi-x)} g(\pi -x)
\\  \beta_2 e^{i \tau_2 x} g(x) \end{pmatrix}.
\end{equation}
Since we have $\Phi = A(E),$ the lemma will be proved if we show
that $A$ is an isomorphism. Since the functions $\displaystyle e^{i
\tau_\nu x} $ and $\displaystyle e^{i \tau_\nu (\pi -x)}, \; \nu
=1,2, $ are bounded, it follows that $A$ is bounded operator. Let us
find its inverse. By (\ref{24}), the equation $$A
\begin{pmatrix}  f\\ g \end{pmatrix} = \begin{pmatrix}  F\\ G
\end{pmatrix}$$ is equivalent to the following system of two
linear equations in two unknowns $f,g:$
\begin{eqnarray}
\label{25} \alpha_1 e^{i \tau_1 x} f(x) +\beta_1  e^{i \tau_2 x} g(x)
= F(\pi -x),\\ \nonumber  \alpha_2 e^{i \tau_1 x} f(x) +\beta_2  e^{i
\tau_2 x} g(x) = G(x).
\end{eqnarray}
By (\ref{20}) and (\ref{20a}), we get $$
\begin{pmatrix}  e^{i \tau_1 x} f(x)
\\  e^{i \tau_2 x} g(x) \end{pmatrix} =
\begin{pmatrix}  \alpha^\prime_1 F (\pi-x) + \alpha^\prime_2 G(x)
\\  \beta^\prime_1 F (\pi-x) + \beta^\prime_2 G(x) \end{pmatrix},$$
which leads to
\begin{equation}
\label{26} A^{-1}
\begin{pmatrix}  F\\ G \end{pmatrix} =
\begin{pmatrix} e^{-i \tau_1 x}[ \alpha^\prime_1 F (\pi-x) + \alpha^\prime_2
G(x)]
\\  e^{-i \tau_2 x} [\beta^\prime_1 F (\pi-x) + \beta^\prime_2 G(x)] \end{pmatrix},
\end{equation}
Now it is easy to see that $A^{-1} $ is bounded.

Let us find the adjoint operator of $A^{-1} $. Since $$ \left
\langle A^{-1} \begin{pmatrix} F\\G
\end{pmatrix},\begin{pmatrix} f\\0 \end{pmatrix}  \right \rangle
= \frac{1}{\pi} \int_0^\pi  \{ [ \alpha^\prime_1 F (\pi-x) +
\alpha^\prime_2 G(x)]e^{-i \tau_1 x}\overline{f(x)} dx $$ $$=
\frac{1}{\pi} \int_0^\pi \left ( F (x) \overline{
\overline{\alpha^\prime_1} f(\pi-x)e^{i \overline{\tau_1}(\pi -x)}} +
G(x) \overline{\overline{\alpha^\prime_2} f(x)e^{i
\overline{\tau_1}x} } \right ) dx, $$ we get $$
(A^{-1})^*\begin{pmatrix} f\\0
\end{pmatrix}= \begin{pmatrix}  \overline{\alpha^\prime_1}
f(\pi-x)e^{i \overline{\tau_1}(\pi -x)}\\ \overline{\alpha^\prime_2}
f(x)e^{i \overline{\tau_1}x} \end{pmatrix}.  $$ In an analogous way
it follows that
$$ (A^{-1})^*\begin{pmatrix} 0\\g
\end{pmatrix}= \begin{pmatrix}  \overline{\beta^\prime_1}
g(\pi-x)e^{i \overline{\tau_2}(\pi -x)}\\ \overline{\beta^\prime_2}
g(x)e^{i \overline{\tau_2}x} \end{pmatrix}.  $$ Thus,
\begin{equation}
\label{27} (A^{-1})^*
\begin{pmatrix}  f\\ g \end{pmatrix} =
\begin{pmatrix}  \overline{\alpha^\prime_1}
f(\pi-x)e^{i \overline{\tau_1}(\pi -x)}\\ \overline{\alpha^\prime_2}
f(x)e^{i \overline{\tau_1}x} \end{pmatrix} +
\begin{pmatrix}  \overline{\beta^\prime_1}
g(\pi-x)e^{i \overline{\tau_2}(\pi -x)}\\ \overline{\beta^\prime_2}
g(x)e^{i \overline{\tau_2}x} \end{pmatrix}.
\end{equation}
By (\ref{p2a}), the system $\Phi$ is a Riesz basis, and its
biorthogonal system is given by (\ref{21*}) and (\ref{22*}). This
completes the proof.
\end{proof}

The system $\tilde{\Phi} $ has the same form as $\Phi,$ so it is a
system of eigenvectors of $L^0$ subject to appropriate boundary
conditions. Indeed, let $S$ denote the matrix $\begin{pmatrix} b
&a \\ d  &c \end{pmatrix},$ and let
\begin{equation}
\label{29} (S^{-1})^* =
\begin{pmatrix} \tilde{b} &\tilde{a}\\ \tilde{d}  &\tilde{c}
\end{pmatrix}.
\end{equation}
If $\begin{pmatrix} \alpha_1\\ \alpha_2 \end{pmatrix}$ and
$\begin{pmatrix} \beta_1 \\ \beta_2 \end{pmatrix}$ are eigenvectors
of $S$  corresponding to $-z_1 $ and $- z_2 $ as in (\ref{20}) and
Lemma \ref{lem2}, then
\begin{equation}
\label{30} S \begin{pmatrix} \alpha_1&\beta_1\\ \alpha_2 & \beta_2
\end{pmatrix}=
\begin{pmatrix} -z_1 \alpha_1 & -z_2 \beta_1\\ -z_1\alpha_2& -z_2 \beta_2 \end{pmatrix}.
\end{equation}
Let us mention that  the relation (\ref{30})  determines the
matrix $S$ if numbers $z_1 \neq z_2 $  and $\alpha_1, \alpha_2, \beta_1, \beta_2$
 satisfying (\ref{20})   are given.

 In view of (\ref{21}), taking the inverse matrices of both
sides of (\ref{30}), and then passing to adjoint matrices, we get
$$ (S^{-1})^*
\begin{pmatrix} \overline{\alpha^\prime_1}&  \overline{\beta^\prime_1}\vspace{1mm}\\
\overline{\alpha^\prime_2} &  \overline{\beta^\prime_2}
\end{pmatrix}=\begin{pmatrix} -\frac{1}{\overline{z_1}}\overline{\alpha^\prime_1}&
-\frac{1}{\overline{z_2}}\overline{\beta^\prime_1} \vspace{1mm}\\
-\frac{1}{\overline{z_1}}\overline{\alpha^\prime_2} &
-\frac{1}{\overline{z_2}}\overline{\beta^\prime_2} \end{pmatrix}. $$
This means that $-1/\overline{z_1},-1/\overline{z_2}$ are the
eigenvalues of the matrix $(S^{-1})^*,$ and $\begin{pmatrix}
\overline{\alpha^\prime_1} \vspace{1mm}\\ \overline{\alpha^\prime_2}
\end{pmatrix},$
$\begin{pmatrix} \overline{\beta^\prime_1} \vspace{1mm}\\
\overline{\beta^\prime_2}
\end{pmatrix}$ is a pair of corresponding (linearly independent)
eigenvectors.

Consider the boundary conditions that correspond to the matrix
\begin{equation}
\label{31}
\begin{pmatrix} 1 & \tilde{b} &\tilde{a}&0 \\ 0& \tilde{d}
&\tilde{c}&1
\end{pmatrix}
\end{equation}
with $\tilde{b}, \tilde{a}, \tilde{d},\tilde{c}$ coming from
(\ref{29}). In view of Lemma \ref{lem2},
$1/\overline{z_1}$ and $1/\overline{z_2}$ are the roots of the
characteristic equation (\ref{12}). But if $z=e^{i\tau \pi} $ then
$1/\overline{z}= \overline{e^{-i\tau \pi}}=e^{i\overline{\tau}
\pi}.$ Now, by (\ref{21}) --
(\ref{22*}), it follows that  $\tilde{\Phi}$ is a system of
eigenvectors of $L^0$ subject to the boundary conditions
(\ref{31}).

Next we show that, as usual, the biorthogonal system $\tilde{\Phi}
$ is the system of eigenvectors of the adjoint operator
$(L^0_{bc})^*$ (or, which is the same, of $L^0$ subject to adjoint
boundary conditions $bc^*$).

\begin{Lemma}
\label{lem4} Let $L_{bc}^0$ be a closed operator with boundary
conditions $bc$ defined by (\ref{8a}) and (\ref{8}). Then its
adjoint operator $(L_{bc})^*$ is  $L^0_{bc^*},$ where the boundary
conditions  $bc^*$ are given by the matrix (\ref{31}).
\end{Lemma}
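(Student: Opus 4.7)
The plan is to identify the adjoint domain via the standard Green's/integration-by-parts identity and then reduce the resulting boundary conditions on the ``test'' function to the normalized form encoded by the matrix (\ref{31}).

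First, I would compute, for $f,g\in H^1(I,\mathbb{C}^2)$,
\[
\langle L^{0}f,g\rangle-\langle f,L^{0}g\rangle
=\frac{i}{\pi}\Bigl(f_1(\pi)\overline{g_1(\pi)}-f_1(0)\overline{g_1(0)}
-f_2(\pi)\overline{g_2(\pi)}+f_2(0)\overline{g_2(0)}\Bigr),
\]
by integrating by parts in each of the two scalar terms produced by the diagonal matrix $\mathrm{diag}(1,-1)$; no interior terms survive, and all boundary contributions are as above. The standard abstract framework (minimal domain $= H^{1}_{0}$ part, maximal domain $= H^{1}$, no boundary conditions) forces any $g\in\mathrm{Dom}((L^{0}_{bc})^{*})$ to lie in $H^1(I,\mathbb{C}^2)$; thus the entire question reduces to determining when the above boundary expression vanishes for every admissible $f$.

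Second, I would use the boundary conditions (\ref{8a}) to eliminate $f_1(0)$ and $f_2(\pi)$, writing
\[
f_1(0)=-bf_1(\pi)-af_2(0),\qquad f_2(\pi)=-df_1(\pi)-cf_2(0),
\]
so that $f_1(\pi)$ and $f_2(0)$ are free parameters. Substituting into the boundary term and collecting coefficients of $f_1(\pi)$ and $f_2(0)$, the identity $\langle L^{0}f,g\rangle=\langle f,L^{0}g\rangle$ for all admissible $f$ becomes the pair
\[
g_1(\pi)+\bar{b}\,g_1(0)+\bar{d}\,g_2(\pi)=0,\qquad
g_2(0)+\bar{a}\,g_1(0)+\bar{c}\,g_2(\pi)=0,
\]
whose matrix of coefficients (in the column order $g_1(0),g_1(\pi),g_2(0),g_2(\pi)$) is
\[
\begin{pmatrix}\bar b & 1 & 0 & \bar d\\ \bar a & 0 & 1 & \bar c\end{pmatrix}.
\]

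Third, to bring this into the normalized form (\ref{8}), I would left-multiply by the inverse of the $2{\times}2$ block on columns $1$ and $4$, i.e.\ by $\bigl(\begin{smallmatrix}\bar b & \bar d\\ \bar a & \bar c\end{smallmatrix}\bigr)^{-1}=\overline{S^{-1}}$ (since this block equals $\overline{S^{\top}}$, whose inverse is $\overline{(S^{\top})^{-1}}=\overline{(S^{-1})^{\top}}$). A direct calculation yields exactly the matrix (\ref{31}) with
\[
\begin{pmatrix}\tilde b & \tilde a\\ \tilde d & \tilde c\end{pmatrix}
=\frac{1}{\overline{bc-ad}}\begin{pmatrix}\bar c & -\bar d\\ -\bar a & \bar b\end{pmatrix}
=(S^{-1})^{*},
\]
in agreement with (\ref{29}). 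This simultaneously shows both inclusions: any $g$ satisfying (\ref{31}) lies in $\mathrm{Dom}((L^{0}_{bc})^{*})$ (by the Green identity), and conversely any $g$ in that adjoint domain must satisfy the two linear equations displayed, which after normalization are (\ref{31}).

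The main obstacle is essentially bookkeeping: one must keep track of the columns/row-operations correctly so that the conjugation and inversion combine into the single clean statement $\bigl(\begin{smallmatrix}\tilde b & \tilde a\\ \tilde d & \tilde c\end{smallmatrix}\bigr)=(S^{-1})^{*}$. The analytic ingredient---characterizing $\mathrm{Dom}((L^{0}_{bc})^{*})\subset H^{1}(I,\mathbb{C}^{2})$---is entirely standard for first-order systems with bounded invertible principal symbol, so I would just cite it rather than redo it.
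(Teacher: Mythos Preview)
Your proof is correct and follows essentially the same route as the paper: integrate by parts, use (\ref{8a}) to eliminate $f_1(0)$ and $f_2(\pi)$ so that $f_1(\pi),f_2(0)$ are free, read off the adjoint boundary matrix $\bigl(\begin{smallmatrix}\bar b & 1 & 0 & \bar d\\ \bar a & 0 & 1 & \bar c\end{smallmatrix}\bigr)$, and normalize by left-multiplying by the inverse of its $(1,4)$-block. One small slip: that block is $\overline{S^{\top}}=S^{*}$, so its inverse is $(S^{-1})^{*}=\overline{(S^{-1})^{\top}}$, not $\overline{S^{-1}}$ as you first wrote (your parenthetical has it right, and your final formula is correct).
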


\begin{proof}
With $f= \begin{pmatrix} f_1\\ f_2 \end{pmatrix}$ and $
g=\begin{pmatrix} g_1 \\ g_2  \end{pmatrix}$ such that $f,g  \in
H^1 (I,\mathbb{C}^2) $ we have

$$ \langle L^0 f,g\rangle - \langle f, L^0 g \rangle =
\frac{i}{\pi} \int_0^\pi \frac{d}{dx}\left (f_1 (x) \overline{g_1
(x)}-f_2 (x) \overline{g_2 (x)}\right  ) dx $$ $$ =\frac{i}{\pi}
\left ( f_1 (\pi) \overline{g_1 (\pi)} -f_1 (0) \overline{g_1 (0)}
-f_2 (\pi) \overline{g_2 (\pi)} +f_2 (0) \overline{g_2 (0)}\right
)$$ $$=\frac{i}{\pi} \left ( f_1 (\pi) \overline{g_1 (\pi)} + (b
f_1 (\pi) + a f_2 (0))\overline{g_1 (0)} + (df_1 (\pi) + cf_2
(0))\overline{g_2 (\pi)} +f_2 (0) \overline{g_2 (0)} \right ) $$
$$ =\frac{i}{\pi} \left ( f_1 (\pi) \overline{[\overline{b} g_1
(0) +g_1 (\pi) + \overline{d} g_2 (\pi)]} +f_2 (0)
\overline{[\overline{a} g_1 (0) + g_2 (0) +\overline{c}g_2 (\pi)]}
\right ).$$ In view of (\ref{8a}), one can easily see that $f_1
(\pi)$ and $f_2 (0)$ could be any numbers.

Therefore, the boundary conditions of the adjoint operator are
determined by the matrix
\begin{equation}
\label{33}
\begin{pmatrix} \overline{b} & 1 &0 &\overline{d} \\ \overline{a}&
0 &1&\overline{c}
\end{pmatrix}.
\end{equation}
In view of (\ref{29}), if we bring it to the equivalent form
(\ref{8})  multiplying from the left by $\begin{pmatrix}
\overline{b} &\overline{d} \\ \overline{a}&\overline{c}
\end{pmatrix}^{-1}, $ the result will be just (\ref{31}).
\end{proof}

3. {\em Dirichlet--type boundary conditions.} In general, for strictly
regular $bc,$ the spectrum of the operator $ L^0_{bc}$ consists of two
arithmetic progressions (\ref{17}) and (\ref{18}), with difference =
2. If
\begin{equation}
\label{62} b+c =0,
\end{equation}
then the equation (\ref{13}) has the following two roots
\begin{equation}
\label{63} z_1 =\sqrt{ad-bc}, \quad z_2 = -z_1.
\end{equation}
In view of (\ref{14}), in this case we have $ z_1 = e^{i\tau_1 \pi}$
and $ z_2 = e^{i\tau_2 \pi}$ with $\tau_2 = \tau_1 \pm 1.$
Therefore, the union of the
corresponding two arithmetic progressions (\ref{17}) and (\ref{18})
gives the spectrum of $L^0_{bc}$ in the form of one arithmetic
progression with difference 1:
\begin{equation}
\label{64} \lambda = \tau_1 + m, \quad m \in \mathbb{Z}.
\end{equation}
We call boundary conditions with the property (\ref{62}) {\em
Dirichlet-type boundary conditions.}

For Dirichlet-type $bc$, the adjoint boundary conditions $bc^*$
are also Dirichlet-type. Indeed, in view of (\ref{13}), $bc$ given
by a matrix (\ref{8}) are Dirichlet-type if and only if we have
 $z_1+ z_2=0, $ where $z_1 $ and $z_2 $ are the roots of
 (\ref{13}). By Lemma \ref{lem4} and the discussion after (\ref{31}), the
roots of the equation (\ref{13}) that corresponds to $bc^*$ are
$1/\overline{z_1},1/\overline{z_2},$  so we have  $$
\frac{1}{\overline{z_1}}+ \frac{1}{\overline{z_2}} =
\frac{\overline{z_1 +z_2}}{\overline{z_1 z_2}} =0. $$ Therefore,
$bc^*$ are Dirichlet-type also.\vspace{3mm}

4. {\em Regular but not strictly regular boundary conditions.}

Now we assume that (\ref{10}) holds, but (\ref{11}) fails, i.e.,
\begin{equation}
\label{68} (b+c)^2 - 4(bc-ad) = (b-c)^2 + 4ad =0.
\end{equation}
In this case the characteristic equation (\ref{13}) has one double
root:
\begin{equation}
\label{70} z_*=-(b+c)/2.
\end{equation}
Notice, that $z_* \neq 0 $ because otherwise (\ref{68}) would imply
$bc-ad =0$ which contradicts to the regularity condition (\ref{10}).

Let $\tau_* $ be chosen so that
\begin{equation}
\label{70.1}  z_* = -(b+c)/2= e^{i \pi \tau_*}, \quad   |
Re \, \tau_*|
\leq 1.
\end{equation}
Then all eigenvalues of $L^0_{bc}$ are given by
\begin{equation}
\label{70.2} Sp \, (L^0_{bc}) = \{ \tau_* +k,\;\; k \in 2\mathbb{Z}
\}.
\end{equation}
In view of Lemma \ref{lem2}, the corresponding eigenvectors have the
form (\ref{3}) with
$\begin{pmatrix} \xi \\ \eta \end{pmatrix} =\begin{pmatrix} \alpha_1 z_*\\
\alpha_2 \end{pmatrix},$ where $\begin{pmatrix} \alpha_1 \\ \alpha_2
\end{pmatrix}$ is an eigenvector of the matrix $A_{23}=
\left [ \begin{array}{cc} b  &  a\\ d  & c \end{array} \right ]$
corresponding to its double eigenvalue $-z_*,$ i.e.,
\begin{equation}
\label{71}
 (A_{23}+ z_* I)
\begin{pmatrix} \alpha_1 \\ \alpha_2 \end{pmatrix}=
\left [ \begin{array}{cc} b +z_* &  a\\ d  & c+z_*   \end{array}
\right ] \begin{pmatrix} \alpha_1 \\ \alpha_2
\end{pmatrix}= \left [ \begin{array}{cc} \frac{b-c}{2} & a\\ d &
\frac{c-b}{2}\end{array} \right ] \begin{pmatrix} \alpha_1 \\
\alpha_2 \end{pmatrix}=0.
\end{equation}

The matrix $A_{23}$ will have two linearly independent eigenvectors
$\begin{pmatrix} \alpha_1 \\ \alpha_2
\end{pmatrix}$ and $\begin{pmatrix} \beta_1 \\ \beta_2
\end{pmatrix}$ if and only if $A_{23}+ z_* I$
is the zero matrix, i.e.,
\begin{equation}
\label{72} b=c, \quad a=0, \quad d=0.
\end{equation}
Then the matrix (\ref{8}) has the form
\begin{equation}
\label{73} \left [ \begin{array}{cccc} 1 & b & 0 & 0\\ 0 & 0 & b &
1 \end{array} \right ], \qquad b\neq 0.
\end{equation}
We call the boundary conditions given by the matrix (\ref{8}) {\em
periodic--type} if (\ref{72}) holds, i.e., $bc $ is defined by
(\ref{73}).  Using the same argument as in the strictly regular case we get the
following lemma.

\begin{Lemma}
\label{lem5} For periodic-type $bc,$  (\ref{17})
with $\tau_1 = \tau_*$  gives all
eigenvalues of $L^0_{bc},$  and each eigenvalue is of geometric
multiplicity 2. There are linearly independent vectors
$\begin{pmatrix} \alpha_1
\\ \alpha_2
\end{pmatrix}$ and $\begin{pmatrix} \beta_1 \\ \beta_2
\end{pmatrix}$
such that the system $\Phi =\Phi^1 \cup \Phi^2,$  given by
(\ref{21}) and (\ref{22}) with $\tau_2=\tau_1= \tau_*, $ is a Riesz
basis in the space $L^2 (I, \mathbb{C}^2), \; I= [0,\pi].$ Its biorthogonal
system $\tilde{\Phi} =\tilde{\Phi}^{1} \cup \tilde{\Phi}^{2}$ is
defined by (\ref{21*}) and (\ref{22*}).
\end{Lemma}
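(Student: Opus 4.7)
The plan is to adapt the proof of Lemma \ref{lem3} almost verbatim, once we pin down how the free choice of eigenvectors arises in the periodic-type case. First I would observe that under condition (\ref{72}), the matrix $A_{23}+z_\ast I$ is identically zero (as already recorded in (\ref{71}) with $b=c$, $a=d=0$), so every nonzero vector in $\mathbb{C}^2$ is an eigenvector of $A_{23}$ corresponding to $-z_\ast$. I can therefore pick any two linearly independent vectors $(\alpha_1,\alpha_2)^{\!\top}$ and $(\beta_1,\beta_2)^{\!\top}$ and apply Lemma \ref{lem2} to conclude that, for each $\lambda=\tau_\ast+k$ with $k\in 2\mathbb{Z}$, the functions $\varphi_k^1$ and $\varphi_k^2$ (as in (\ref{21}) and (\ref{22}) with $\tau_1=\tau_2=\tau_\ast$) satisfy $L^0 y=\lambda y$ and the boundary conditions (\ref{8a}). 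Conversely, there are no other eigenvalues: the characteristic equation (\ref{13}) has $z_\ast$ as its only root, and (\ref{17})--(\ref{18}) collapse to the single arithmetic progression $\{\tau_\ast+k:k\in 2\mathbb{Z}\}$, each of whose elements has a two-dimensional eigenspace because the two eigenvectors above are linearly independent.

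Next I would verify the Riesz basis property by repeating the construction in Lemma \ref{lem3}. Define $A:L^2(I,\mathbb{C}^2)\to L^2(I,\mathbb{C}^2)$ exactly by formula (\ref{24}) but with $\tau_1$ and $\tau_2$ both replaced by $\tau_\ast$. Since $\tau_\ast$ is fixed and bounded, $A$ is bounded. Because $(\alpha_1,\alpha_2)^{\!\top}$ and $(\beta_1,\beta_2)^{\!\top}$ are linearly independent, the auxiliary matrix in (\ref{20a}) remains invertible, so the same pointwise calculation leading to (\ref{26}) produces a bounded inverse
\[
A^{-1}\begin{pmatrix} F\\ G\end{pmatrix}=
\begin{pmatrix} e^{-i\tau_\ast x}[\alpha^\prime_1 F(\pi-x)+\alpha^\prime_2 G(x)]\\
 e^{-i\tau_\ast x}[\beta^\prime_1 F(\pi-x)+\beta^\prime_2 G(x)]\end{pmatrix}.
\]
Applied to the orthonormal basis $E=E^1\cup E^2$ from (\ref{23}), $A$ returns precisely $\Phi$, so $\Phi=A(E)$ is a Riesz basis by the definition (\ref{p2}).

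Finally, the biorthogonal system is computed exactly as in Lemma \ref{lem3}: the same integration by parts that gave (\ref{27}) (with $\overline{\tau_1},\overline{\tau_2}$ now both equal to $\overline{\tau_\ast}$) yields
\[
(A^{-1})^{\ast}\begin{pmatrix} f\\ g\end{pmatrix}
=\begin{pmatrix}\overline{\alpha^\prime_1}f(\pi-x)e^{i\overline{\tau_\ast}(\pi-x)}\\
\overline{\alpha^\prime_2}f(x)e^{i\overline{\tau_\ast}x}\end{pmatrix}
+\begin{pmatrix}\overline{\beta^\prime_1}g(\pi-x)e^{i\overline{\tau_\ast}(\pi-x)}\\
\overline{\beta^\prime_2}g(x)e^{i\overline{\tau_\ast}x}\end{pmatrix},
\]
and invoking (\ref{p2a}) gives $\widetilde{\Phi}=(A^{-1})^{\ast}(E)$, which matches (\ref{21*}) and (\ref{22*}) with $\tau_1=\tau_2=\tau_\ast$.

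There is no real obstacle, only one subtlety to flag: in the strictly regular proof, the linear independence of the two eigenvectors was forced by $z_1\neq z_2$, whereas here it must be postulated, since $A_{23}=-z_\ast I$ does not single out any particular pair. Once this choice is made, the isomorphism argument is insensitive to whether $\tau_1=\tau_2$, so the Riesz basis conclusion transfers at once.
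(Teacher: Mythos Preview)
Your proposal is correct and matches the paper's approach exactly: the paper itself does not give a separate proof but simply states ``Using the same argument as in the strictly regular case we get the following lemma,'' which is precisely what you have spelled out. One tiny slip: the passage from (\ref{26}) to (\ref{27}) is a direct computation of the adjoint via the inner product (with the change of variable $x\mapsto\pi-x$), not an integration by parts.
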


Next we consider the case when (\ref{68}) holds but (\ref{72})
fails, i.e.,
\begin{equation}
\label{74} |b-c|+ |a| + |d| >0.
\end{equation}
As we will see below, in this case each eigenvalue of $L^0_{bc} $ is
of algebraic multiplicity 2 but of geometric multiplicity 1, i.e.,
associated eigenvectors appear. Here we have the following subcases:

(i) If $a= 0,$ then  (\ref{68}) implies $b=c,$ and by (\ref{74})
we have $d\neq 0.$ By the regularity condition (\ref{10}) we have
$ bc-ad \neq 0, $ which yields $b\neq 0. $ In other words, the
matrix (\ref{8}) has the form
\begin{equation}
\label{76} \left [ \begin{array}{cccc} 1 & b & 0 & 0\\ 0 & d & b &
1 \end{array} \right ], \qquad d, b\neq 0.
\end{equation}
Here we choose the following solution of (\ref{71})
\begin{equation}
\label{77} \alpha_1 =0, \quad \alpha_2 = d.
\end{equation}

(ii) If $d= 0,$ then  (\ref{68}) implies $b=c,$ and by (\ref{74})
we have $a\neq 0.$ Now the matrix (\ref{8}) has the form
\begin{equation}
\label{78} \left [ \begin{array}{cccc} 1 & b & a & 0\\ 0 & 0 & b &
1 \end{array} \right ], \qquad d, b\neq 0.
\end{equation}
Here we choose the following solution of (\ref{71}):
\begin{equation}
\label{79} \alpha_1 =a, \quad \alpha_2 =0.
\end{equation}

(iii) If $a,d\neq 0,$ then  (\ref{68}) implies $b\neq c.$ Here we
choose the following solution of (\ref{71}):
\begin{equation}
\label{80} \alpha_1 =a, \quad \alpha_2 = (c-b)/2.
\end{equation}

Of course, (\ref{70.2}) gives all eigenvalues. A corresponding
system of eigenvectors is
\begin{equation}
\label{83} \varphi^1_k=
\begin{pmatrix}  \alpha_1 e^{i \tau_* (\pi-x)} e^{-ikx}
\\ \alpha_2 e^{i \tau_* x} e^{ikx}\end{pmatrix},\quad k\in
2\mathbb{Z},
\end{equation}
where $\alpha_1, \alpha_2 $ are given, respectively, by (\ref{77}),
(\ref{79}) and (\ref{80}).

We look for a system of associated eigenvectors of the form
\begin{equation}
\label{85} \varphi^2_k=
\begin{pmatrix}    (\beta_1 - \alpha_1 x) e^{i \tau_* (\pi-x)} e^{-ikx}
\\  (\beta_2 + \alpha_2 x) e^{i \tau_* x} e^{ikx}\end{pmatrix},\quad k\in
2\mathbb{Z}.
\end{equation}
Then  $ L^0 \varphi^2_k = \lambda_k \varphi^2_k  - i \varphi^1_k, \;
$  so  $\varphi^2_k $ is an associated eigenvector if and only if it
satisfies the boundary conditions. This leads to the following system
of two linear equations in two unknowns $\beta_1$ and $\beta_2:$ $$
\begin{array}{ll} \beta_1 z_* + b(\beta_1 -\pi \alpha_1) + a \beta_2= 0
\\ d(\beta_1 -\pi \alpha_1)+ c \beta_2 + (\beta_2 + \pi \alpha_2)z_* =0,
\end{array}
 $$
or, equivalently, $$
\begin{array}{ll} (z_* +b) \beta_1  + a \beta_2= \pi b \alpha_1
\\ d\beta_1 + (c+z_*) \beta_2  =\pi d \alpha_1-\pi \alpha_2 z_*
\end{array}
 $$
By (\ref{70}), $b+ z_* = (b-c)/2, \; c + z_* = (c-b)/2. $ Moreover,
by (\ref{70}) and (\ref{71}), $d \alpha_1 = \alpha_2 (b-c)/2,$ and
therefore, $\pi d \alpha_1-\pi \alpha_2 z_*= \pi \alpha_2
\frac{b-c}{2} +\pi \alpha_2 \frac{b+c}{2}= \pi b \alpha_2.$

Thus, (\ref{85}) is a system of associated vectors if and only if
$\beta_1$ and $\beta_2$ satisfy
\begin{equation}
\label{86} \left [ \begin{array}{cccc}
b +z_* &  a\\ d  & c+z_* \end{array} \right ] \begin{pmatrix} \beta_1 \\
\beta_2 \end{pmatrix}= \left [ \begin{array}{cc} \frac{b-c}{2} & a\\
d & \frac{c-b}{2}\end{array} \right ]\begin{pmatrix} \beta_1 \\
\beta_2
\end{pmatrix}=\begin{pmatrix}  \pi b\alpha_1 \\ \pi b \alpha_2
\end{pmatrix}.
\end{equation}
Notice, that (\ref{71}) and (\ref{86}) mean that $\begin{pmatrix}
\alpha_1 \\ \alpha_2 \end{pmatrix}$ is an eigenvalue of the matrix
$A_{23} = \left [ \begin{array}{cc}  b &a \\ d & c \end{array} \right
] $ corresponding to
its double eigenvalue $-z_*, $ and $\begin{pmatrix} \beta_1 \\
\beta_2
\end{pmatrix}$ is an associated vector.

With $\alpha_1$ and $ \alpha_2 $ fixed, respectively, in (\ref{77}),
(\ref{79}) and (\ref{80}), we choose corresponding solutions of
(\ref{86}):
\begin{equation}
\label{87} \beta_1 =\pi b, \quad \beta_2 = 0 \quad \text{in the case
(i)};
\end{equation}
and
\begin{equation}
\label{90} \beta_1 =0, \quad \beta_2 = \pi b  \quad \text{in the
cases (ii) and (iii)}.
\end{equation}

\begin{Lemma}
\label{lem6} The system $\Phi $ of eigenfunctions $\varphi^1_k, \; k
\in 2\mathbb{Z}, $ and associated functions $\varphi^2_k, \; k \in
2\mathbb{Z}, $ given in (\ref{83}) and (\ref{85}), is a Riesz basis
in the space $L^2 (I,\mathbb{C}^2), \; I = [0,\pi].$ Its biorthogonal
system is $\tilde{\Phi} =\{ \tilde{\varphi}^1_k, \tilde{\varphi}^2_k,
\: k \in 2\mathbb{Z} \},$ where
\begin{equation}
\label{91} \tilde{\varphi}^1_k= \begin{pmatrix}
\bar{\Delta}^{-1} \overline{\alpha_2}
e^{i\overline{\tau_*} (\pi - x)} e^{-ikx} \\
\bar{\Delta}^{-1} \overline{\alpha_1}
e^{i\overline{\tau_*} x} e^{ikx}
\end{pmatrix}, \quad
\tilde{\varphi}^2_k= \begin{pmatrix} \bar{\Delta}^{-1}
[\overline{\beta_2} +\overline{\alpha_2} (\pi -x)]
e^{i\overline{\tau_*} (\pi - x)} e^{-ikx}
\\ \bar{\Delta}^{-1} [\overline{\beta_1} - \overline{\alpha_1}(\pi-x)]
e^{i\overline{\tau_*} x} e^{ikx}
\end{pmatrix}
\end{equation}
with $\Delta =\alpha_1 \beta_2 - \alpha_2 \beta_1 + \pi \alpha_1
\alpha_2. $
\end{Lemma}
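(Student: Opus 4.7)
My plan is to follow the template of Lemma~\ref{lem3}: I will build an explicit bounded isomorphism $A\colon L^2(I,\mathbb{C}^2) \to L^2(I,\mathbb{C}^2)$ that carries the orthonormal basis $E=E^1\cup E^2$ from (\ref{23}) onto $\Phi$, so that (\ref{p2a}) immediately produces the biorthogonal system. Since $e^{ik\pi}=1$ for $k\in 2\mathbb{Z}$ gives $e^{ik(\pi-x)}=e^{-ikx}$, the natural candidate is
\begin{equation*}
A\begin{pmatrix} f \\ g \end{pmatrix} :=
\begin{pmatrix} \alpha_1 e^{i\tau_*(\pi-x)} f(\pi-x) + (\beta_1 - \alpha_1 x)\, e^{i\tau_*(\pi-x)} g(\pi-x) \\
\alpha_2 e^{i\tau_* x} f(x) + (\beta_2 + \alpha_2 x)\, e^{i\tau_* x} g(x) \end{pmatrix};
\end{equation*}
a direct substitution verifies $Ae^1_k = \varphi^1_k$ and $Ae^2_k = \varphi^2_k$, so $\Phi=A(E)$. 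The exponential factors and multiplication by $x$ are bounded on $L^2(I,\mathbb{C}^2)$, so $A$ is bounded.

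Next I will invert $A$. Writing $A\bigl(\begin{smallmatrix} f \\ g\end{smallmatrix}\bigr)=\bigl(\begin{smallmatrix}F \\ G\end{smallmatrix}\bigr)$ componentwise and substituting $x\mapsto \pi-x$ in the first component turns this into a $2\times 2$ linear system in $(f(x),g(x))$ whose determinant simplifies to the constant
\begin{equation*}
\alpha_1(\beta_2+\alpha_2 x) - \alpha_2\bigl(\beta_1 - \alpha_1(\pi-x)\bigr) = \alpha_1\beta_2 - \alpha_2\beta_1 + \pi \alpha_1 \alpha_2 = \Delta .
\end{equation*}
A case-by-case check using (\ref{77})--(\ref{80}) and (\ref{87})--(\ref{90}) gives $\Delta=-\pi bd$ in case (i), $\Delta=\pi ab$ in case (ii) and $\Delta=\pi a(b+c)/2$ in case (iii); the regularity condition (\ref{10}) together with the nonvanishing of $z_*$ (which forces $b+c\neq 0$) then forces $\Delta\neq 0$ in every case. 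Cramer's rule thereby produces a bounded $A^{-1}$, so $A$ is an isomorphism and $\Phi=A(E)$ is a Riesz basis by the argument following (\ref{p2}).

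Finally, (\ref{p2a}) identifies the biorthogonal system as $\tilde\Phi=\{(A^{-1})^* e^\nu_k\}$. Solving the system above by Cramer's rule expresses $A^{-1}\bigl(\begin{smallmatrix}F \\ G \end{smallmatrix}\bigr)$ as a linear combination of $F(\pi-x)$ and $G(x)$ with coefficients involving $\Delta^{-1}$; taking inner products against $\bigl(\begin{smallmatrix} f \\ 0\end{smallmatrix}\bigr)$ and $\bigl(\begin{smallmatrix} 0 \\ g \end{smallmatrix}\bigr)$ exactly as in the proof of Lemma~\ref{lem3} yields closed expressions for $(A^{-1})^*\bigl(\begin{smallmatrix} f \\ 0\end{smallmatrix}\bigr)$ and $(A^{-1})^*\bigl(\begin{smallmatrix} 0\\ g \end{smallmatrix}\bigr)$; evaluating at $f(x)=g(x)=e^{ikx}$ will reproduce precisely the formulas (\ref{91}), with the prefactor $\bar\Delta^{-1}$ emerging as the complex conjugate of the determinant. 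The main obstacle is the bookkeeping of the three subcases needed to verify $\Delta\neq 0$; the rest is a straightforward adaptation of the strictly regular argument.
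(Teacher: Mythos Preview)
Your proposal is correct and is essentially identical to the paper's own proof: the paper defines exactly the same operator $A$ (formula (\ref{94})), verifies $\Phi=A(E)$, computes the determinant of the resulting $2\times2$ system and obtains the same values $-\pi bd$, $\pi ab$, $\pi a(b+c)/2$ in cases (i)--(iii), solves for $A^{-1}$, and then computes $(A^{-1})^*$ to read off the biorthogonal system. Your explicit justification of $\Delta\neq0$ via the regularity condition and $z_*\neq0$ is a welcome clarification that the paper leaves implicit.
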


\begin{proof}
Consider the operator $A: L^2 (I, \mathbb{C}^2) \to L^2 (I,
\mathbb{C}^2)$ defined by
\begin{equation}
\label{94} A
\begin{pmatrix}  f\\ g \end{pmatrix} =
\begin{pmatrix}  \alpha_1 e^{i \tau_* (\pi-x)} f(\pi -x)
\\  \alpha_2 e^{i \tau_* x} f(x) \end{pmatrix}
+ \begin{pmatrix}  (\beta_1 - \alpha_1 x) e^{i \tau_* (\pi-x)} g(\pi
-x)
\\  (\beta_2 + \alpha_2 x) e^{i \tau_* x} g(x) \end{pmatrix}.
\end{equation}
Since we have $\Phi = A(E),$ where $E$ is the orthonormal basis
(\ref{23}), the lemma will be proved if we show that $A$ is an
isomorphism. One can easily see that $A$ is bounded operator. Let
us find its inverse. By (\ref{94}), the equation $$A
\begin{pmatrix}  f\\ g \end{pmatrix} = \begin{pmatrix}  F\\ G
\end{pmatrix}$$ is equivalent to the following system of two
linear equations in two unknowns $f,g:$
\begin{eqnarray}
\label{95} \alpha_1  f(x) +(\beta_1 - \alpha_1 [\pi -x])   g(x) =
F(\pi -x)e^{-i \tau_* x},\\ \nonumber  \alpha_2  f(x) +(\beta_2 +
\alpha_2 x) g(x) = G(x)e^{-i \tau_* x}.
\end{eqnarray}
The determinant of this system is
$$ \Delta
= \det \left [
\begin{array}{cc} \alpha_1 & \beta_1-\pi \alpha_1+\alpha_1 x
\\ \alpha_2 & \beta_2 + \alpha_2x
\end{array} \right ]  =\begin{cases} -\pi bd &  \text{in case (i)}\\
\pi ab &  \text{in case (ii)}\\  \pi a (b+c)/2  &  \text{in case
(iii)} \end{cases} $$ due to our choices of $ \alpha_1, \alpha_2$ in
(\ref{77}),(\ref{79}),(\ref{80}) and  $\beta_1, \beta_2$ in
(\ref{87}) and (\ref{90}). Thus we get $$\Delta
\begin{pmatrix}  f(x)
\\   g(x) \end{pmatrix} =
\begin{pmatrix}  \left [(\beta_2+ \alpha_2 x)
F (\pi-x) -(\beta_1 - \pi \alpha_1 + \alpha_1 x)  G(x) \right ]
e^{-i \tau_* x}\\
\left [ -\alpha_2 F (\pi-x) + \alpha_1 G(x)\right ]e^{-i \tau_* x}
\end{pmatrix},$$ which implies (since $\Delta \neq 0 $)
\begin{equation}
\label{96} A^{-1}
\begin{pmatrix}  F\\ G \end{pmatrix} =\frac{1}{\Delta}
\begin{pmatrix} \left [(\beta_2+ \alpha_2 x) F (\pi-x) -
(\beta_1 - \pi \alpha_1 + \alpha_1 x)  G(x) \right ] e^{-i \tau_*
x}\\ \left [ -\alpha_2 F (\pi-x) + \alpha_1 G(x)\right ]e^{-i \tau_*
x}
\end{pmatrix}.
\end{equation}
Now it is easy to see
that the operator $A^{-1} $ is bounded.

A simple calculation
(similar to the one used in Lemma ~\ref{lem4})
shows
that the adjoint operator of $A^{-1}
$ is
\begin{equation}
\label{97} (A^{-1})^*\begin{pmatrix}  f\\ g \end{pmatrix}=
\bar{\Delta}^{-1}
\begin{pmatrix} [(\overline{\beta_2} +\overline{\alpha_2} \pi - \overline{\alpha_2}x)
f(\pi-x) +
\overline{\alpha_2} g(\pi -x)]e^{i\overline{\tau_*} (\pi - x)}  \\
[-(\overline{\beta_1} - \overline{\alpha_1}\pi+
\overline{\alpha_1}x) f(x) +\overline{\alpha_1}g(x)]
e^{i\overline{\tau_*} x}
\end{pmatrix}.
\end{equation}
Since we have $\tilde{\Phi} =(A^{-1})^*(E), $ where $E$ is the
orthonormal  basis defined in (\ref{23}), the family $ \tilde{\Phi} $
is the biorthogonal system to $\Phi$.
\end{proof}

\section{Matrix representation of $L_{bc}$ and its resolvent $R_{bc}(\lambda)$}

Next we consider, for arbitrary regular $bc,$
the Fourier representation of $L_{bc}$ and its
resolvent $L_{bc}(\lambda)$ with respect to a corresponding Riesz basis consisting
of eigenfunctions and associated functions of the operator
$L_{bc}^0 $  (constructed in Lemmas \ref{lem3}, \ref{lem5},  {\ref{lem6}).\vspace{3mm}

1.  Let $V:  L^2 (I, \mathbb{C}^2) \to L^2 (I, \mathbb{C}^2)$ be the operator of
multiplication by the matrix $v(x) = \left [
\begin{array}{cc}0&P(x)\\Q(x)&0
\end{array} \right ],$  i.e.,
$$ V \begin{pmatrix}y_1 \\ y_2  \end{pmatrix} =\left [
\begin{array}{cc}0&P(x)\\Q(x) & 0
\end{array} \right ] \begin{pmatrix}y_1 \\ y_2 \end{pmatrix}=
 \begin{pmatrix} Py_2 \\ Qy_1 \end{pmatrix}.
$$
For a regular boundary condition $bc,$  let $\Phi=
\{\varphi^1_k,\varphi^2_k, k\in \mathbb{Z}\} $ and $
\tilde{\Phi}=\{\tilde{\varphi}^1_k,\tilde{\varphi}^2_k, k\in
\mathbb{Z}\}$ be the corresponding Riesz basis (consisting of
eigenfunctions and associated functions of the operator $L_{bc}^0$)
and its biorthogonal system constructed,  respectively, in
Lemma~\ref{lem3} if $bc$ is strictly regular, in Lemma~\ref{lem5} if
$bc$ is periodic type, and in Lemma~\ref{lem6} otherwise. In this
section and thereafter, we consider matrix representation with
respect to that basis only.

\begin{Lemma}
\label{lem10} The matrix representation of $V$ with respect to the
basis $\Phi$ has the form
\begin{equation}\label{m1}
V \sim \left [ \begin{array}{cc} V^{11} &V^{12}\\V^{21}&V^{22}
\end{array} \right ], \quad  V^{\mu\nu}= \left ( V^{\mu\nu}_{jk} \right
)_{j,k\in 2\mathbb{Z}}, \quad \mu, \nu \in \{1,2\},
\end{equation}
\begin{equation}\label{m2}
 V^{\mu\nu}_{jk} =\langle V\varphi^\nu_k, \tilde{\varphi}^\mu_j \rangle =
 w^{\mu\nu} (j+k),
\end{equation}
where
\begin{equation}\label{m3}
w^{\mu\nu}=\left
 (w^{\mu\nu}(m)\right ) \in \ell^2 (2\mathbb{Z}), \quad
 \|w^{\mu\nu}\|_{\ell^2} \leq C (\|P\|_{L^2}+\|Q\|_{L^2}),
 \end{equation}
with $C=C(\Phi,\tilde{\Phi}).$
\end{Lemma}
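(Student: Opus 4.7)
The plan is to compute the inner products $\langle V\varphi^\nu_k,\tilde\varphi^\mu_j\rangle$ directly from the explicit formulas for $\varphi^\nu_k$ and $\tilde\varphi^\mu_j$ supplied by Lemmas \ref{lem3}, \ref{lem5}, \ref{lem6}, and then to read off the $\ell^2$ estimate from Parseval's identity. The argument rests on two structural observations. First, since
$$V\begin{pmatrix}y_1\\y_2\end{pmatrix}=\begin{pmatrix}P\,y_2\\Q\,y_1\end{pmatrix},$$
the operator $V$ swaps the two components of $\varphi^\nu_k$: the factor $e^{ikx}$ (originally in the second component of $\varphi^\nu_k$) now appears in the first component of $V\varphi^\nu_k$, and $e^{-ikx}$ in the second. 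Second, the biorthogonal elements $\tilde\varphi^\mu_j$ have exactly the dual exponential structure: $e^{-ijx}$ in the first component and $e^{ijx}$ in the second. After complex conjugation in the inner product, the exponentials in the first scalar term combine to $e^{i(j+k)x}$ and those in the second to $e^{-i(j+k)x}$. This immediately produces the dependence of $V^{\mu\nu}_{jk}$ on $j+k$ only.

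Carrying out the computation in the strictly regular case yields an expression of the form
$$V^{\mu\nu}_{jk}=C_P^{\mu\nu}\cdot\frac{1}{\pi}\int_0^\pi P(x)\,e^{i\sigma_{\mu\nu}x}\,e^{i(j+k)x}\,dx\;+\;C_Q^{\mu\nu}\cdot\frac{1}{\pi}\int_0^\pi Q(x)\,e^{-i\sigma_{\mu\nu}x}\,e^{-i(j+k)x}\,dx,$$
where the constants $C_P^{\mu\nu},C_Q^{\mu\nu}$ are built from $\alpha_i,\alpha_i',\beta_i,\beta_i'$ and from $e^{\pm i\tau_\alpha\pi}$, and $\sigma_{\mu\nu}\in\{\pm 2\tau_1,\pm 2\tau_2,\pm(\tau_1+\tau_2)\}$ depending on $\mu,\nu$. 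Thus for $m=j+k\in 2\mathbb{Z}$ the number $w^{\mu\nu}(m)$ is a fixed linear combination of the Fourier coefficients of $P(x)e^{i\sigma_{\mu\nu}x}$ and $Q(x)e^{-i\sigma_{\mu\nu}x}$ with respect to the orthonormal basis $\{e^{imx}:m\in 2\mathbb{Z}\}$ of $L^2([0,\pi],dx/\pi)$. Parseval's identity, together with the estimate $\|e^{\pm i\sigma_{\mu\nu} x}\|_\infty\le e^{|\mathrm{Im}\,\sigma_{\mu\nu}|\pi}$, then gives
$$\|w^{\mu\nu}\|_{\ell^2(2\mathbb{Z})}\le C(bc)\bigl(\|P\|_{L^2}+\|Q\|_{L^2}\bigr),$$
as required. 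The periodic-type case (Lemma \ref{lem5}) is handled identically with $\tau_1=\tau_2=\tau_*$.

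The one genuinely new point --- and the main obstacle, though still routine --- is the remaining $R\setminus SR$ case (Lemma \ref{lem6}), where the associated eigenfunctions $\varphi^2_k$ and $\tilde\varphi^2_j$ carry additional polynomial factors of the form $\beta_i\pm\alpha_i x$. When these meet $V$, the integrands pick up an extra factor of $x$, producing Fourier coefficients of $xP(x)e^{i\sigma x}$ or $xQ(x)e^{-i\sigma x}$ in place of $P(x)e^{i\sigma x}$ or $Q(x)e^{-i\sigma x}$. Since multiplication by $x$ is bounded on $L^2(I)$ with norm at most $\pi$, these auxiliary functions remain in $L^2(I)$ with norms controlled by $\pi\|P\|_{L^2}$ and $\pi\|Q\|_{L^2}$; Parseval again delivers an $\ell^2$-bound of the same form, with a slightly larger constant $C(\Phi,\tilde\Phi)$ that absorbs the polynomial prefactors.
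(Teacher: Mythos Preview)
Your proposal is correct and follows essentially the same route as the paper: direct computation of $\langle V\varphi^\nu_k,\tilde\varphi^\mu_j\rangle$ from the explicit formulas, identification of the result as Fourier coefficients of bounded weight functions times $P$ or $Q$, and Parseval's identity for the $\ell^2$ bound. The only minor imprecision is that in the $R\setminus SR$ case with $\mu=\nu=2$ both $\varphi^2_k$ and $\tilde\varphi^2_j$ carry affine factors, so the integrand acquires a quadratic (not just linear) polynomial in $x$; this does not affect your argument, since boundedness on $[0,\pi]$ is all that is used.
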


\begin{proof}
We consider only the case where $\mu =1, \nu =2 $  because the proof is
similar in the other three cases.

If $bc$ is strictly regular, then we get, by
(\ref{21})--(\ref{22*}),
$$
V^{12}_{jk} =\langle V\varphi^2_k, \tilde{\varphi}^1_j \rangle= \left
\langle \begin{pmatrix} P(x)\beta_2 e^{i\tau_2 x}e^{ikx}\\
Q(x)\beta_1 e^{i\tau_2 (\pi - x)}e^{-ikx}
\end{pmatrix},  \begin{pmatrix}  \overline{\alpha^\prime_1}
e^{i \overline{\tau_1} (\pi-x)} e^{-ijx}\\
\overline{\alpha^\prime_2} e^{i \overline{\tau_1} x}
e^{ijx}\end{pmatrix} \right \rangle
$$
$$
=\frac{1}{\pi} \int_0^\pi \left [\alpha^\prime_1 \beta_2 e^{i(\tau_2
x+\tau_1 (x- \pi ))} P(x) e^{i(j+k)x} dx + \alpha^\prime_2 \beta_1
e^{i(\tau_2 (\pi - x)-\tau_1 x)} Q(x) e^{-i(j+k)x} \right ] dx.
$$
Therefore, (\ref{m2}) holds for $\mu =1, \nu =2 $ with
\begin{equation}\label{m4}
w^{12}(m): = p^{12}(-m)+ q^{12}(m), \;\; m \in 2\mathbb{Z},
 \end{equation}
where $p^{12}(m)$ and  $ q^{12}(m), \; m \in
2\mathbb{Z},$  are the Fourier coefficients of the functions
$g^{12}(x)P(x)$ and $h^{12}(x)Q(x),$ with
$$
g^{12}(x):=\alpha^\prime_1 \beta_2 e^{i(\tau_2 x+\tau_1 (x- \pi ))},
\quad h^{12}(x):= \alpha^\prime_2 \beta_1 e^{i(\tau_2 (\pi -
x)-\tau_1 x)} .
$$
By the Parseval identity,
$$
\sum_m |p^{12}(m)|^2 = \|g^{12}(x)P(x)\|^2_{L^2 (I)} \leq
\sup_{[0,\pi]} |g^{12}(x)|^2 \cdot \|P\|^2_{L^2 (I)}
$$
and
$$ \sum_m |q^{12}(m)|^2 = \|h^{12}(x)Q(x)\|^2_{L^2 (I)} \leq
\sup_{[0,\pi]} |h^{12}(x)|^2  \cdot \|Q\|^2_{L^2 (I)}.
$$
Thus, (\ref{m3}) holds with a constant $C$ depending on the
parameters $\alpha^\prime_1, \alpha^\prime_2, \beta_1, \beta_2,
\tau_1, \tau_2. $

The proof is exactly the same if $bc$ is periodic type (the same
formulas work but with $\tau_2=\tau_1=\tau_*$).

If $bc$ is not strictly regular and not of periodic type, then by
(\ref{85}) and (\ref{91}) we have
$$
\langle V\varphi^2_k,\tilde{\varphi}^1_k \rangle =\left \langle
\begin{pmatrix} P(x)(\beta_2 + \alpha_2 x) e^{i \tau_* x} e^{ikx}\\
 Q(x) (\beta_1 - \alpha_1 x) e^{i \tau_* (\pi-x)} e^{-ikx}
\end{pmatrix},  \begin{pmatrix}
\bar{\Delta}^{-1} \overline{\alpha_2}
e^{i\overline{\tau_*} (\pi - x)} e^{-ijx} \\
\bar{\Delta}^{-1} \overline{\alpha_1}
e^{i\overline{\tau_*} x} e^{ijx}
\end{pmatrix} \right \rangle
$$
$$
=\frac{1}{\pi} \int_0^\pi \left [
P(x)\frac{\alpha_2}{\Delta}(\beta_2 + \alpha_2 x) e^{i \tau_*
(2x-\pi)} e^{i(j+k)x}\right ] dx
$$
$$
+\frac{1}{\pi} \int_0^\pi  \left [Q(x)\frac{\alpha_1}{\Delta}
(\beta_1 - \alpha_1 x) e^{i \tau_* (\pi-2x)} e^{-i(j+k)x}
 \right ]dx.
$$

Therefore, (\ref{m2}) holds for $\mu =1, \nu =2 $ with
\begin{equation}\label{m7}
w^{12}(m): = p_1^{12}(-m)+ q_1^{12}(m), \;\; m \in 2\mathbb{Z},
 \end{equation}
where $p_1^{12}(m)$ and  $ q_1^{12}(m), \; m
\in 2\mathbb{Z},$ are the Fourier coefficients of the functions
$g_1^{12}(x)P(x)$ and $h_1^{12}(x)Q(x),$ with
$$
g_1^{12}(x):=\frac{\alpha_2}{\Delta}(\beta_2 + \alpha_2 x) e^{i
\tau_* (2x-\pi)}, \quad h_1^{12}(x):= \frac{\alpha_1}{\Delta}
(\beta_1 - \alpha_1 x) e^{i \tau_* (\pi-2x)}.
$$
Since these functions are bounded, again the Parseval identity
implies (\ref{m3}) with a constant $C$ depending on parameters
$\alpha_1, \alpha_2, \beta_1, \beta_2, \tau_*. $

\end{proof}

2. If $bc$ is strictly regular boundary condition, then  by (\ref{17})
and (\ref{18})  the spectrum of $L_{bc}^0$ consists of two disjoint
sequences
$$
Sp (L_{bc}^0) = \{\tau_1 +k, \; k \in 2\mathbb{Z} \} \cup \{\tau_2
+k, \; k \in 2\mathbb{Z} \}. $$
The resolvent operator $R_{bc}^0 (\lambda) = (\lambda
-L_{bc}^0)^{-1}$ is well defined for $\lambda \not\in Sp (L_{bc}^0),
$ and we have
\begin{equation}
\label{1.10} R_{bc}^0 (\lambda) \varphi^\mu_k = \frac{1}{\lambda -
\tau_\mu -k}\varphi^\mu_k, \quad   k \in 2\mathbb{Z}, \;\;
\mu=1,2.
\end{equation}

By (\ref{70.2}), for regular but not strictly regular $bc$ the
spectrum of $L_{bc}^0$ is given by
$$
Sp \, (L^0_{bc}) = \{ \tau_* +k,\;\; k \in 2\mathbb{Z} \},
$$
where each eigenvalue is of algebraic multiplicity 2.
The resolvent operator $R_{bc}^0 (\lambda) = (\lambda
-L_{bc}^0)^{-1}$ is well defined for $\lambda \not\in Sp (L_{bc}^0)
$ by
\begin{equation}
\label{1.10a} R_{bc}^0 (\lambda) \varphi^\mu_k = \frac{1}{\lambda -
\tau_* -k}\varphi^\mu_k, \quad   k \in 2\mathbb{Z}, \;\; \mu=1,2.
\end{equation}

The standard perturbation formula for the resolvent
$$
R_{bc} (\lambda) =  R_{bc}^0 (\lambda) +R_{bc}^0 (\lambda)VR_{bc}^0
(\lambda)+R_{bc}^0 (\lambda)VR_{bc}^0 (\lambda) VR_{bc}^0 (\lambda)+
\cdots $$  can be written as
\begin{equation}
\label{1.11} R_{bc} (\lambda)= (K_\lambda)^2 + \sum_{s=1}^\infty
K_\lambda (K_\lambda V K_\lambda)^s K_\lambda
\end{equation}
provided
\begin{equation}
\label{1.12} (K_\lambda)^2 = R_{bc}^0 (\lambda).
\end{equation}
Then the operator $R_{bc}(\lambda) $ is well-defined by
(\ref{1.11}) if
\begin{equation}
\label{1.13} \|K_\lambda V K_\lambda \|  <1.
\end{equation}
In the next
section we will give conditions under which (\ref{1.13}) holds.

In view of (\ref{1.10}) and (\ref{1.10a}),  we define an operator
$K= K_\lambda $ with the property (\ref{1.12}), respectively,  for
strictly regular $bc$ by
\begin{equation}
\label{1.14} K_\lambda \varphi^\mu_k  = \frac{1}{\sqrt{\lambda
-\tau_\mu -k} } \varphi^\mu_k,  \quad  k \in 2\mathbb{Z}, \;\;
\mu=1,2,
\end{equation}
and for regular but not strictly regular $bc$ by
\begin{equation}
\label{1.14a} K_\lambda \varphi^\mu_k  = \frac{1}{\sqrt{\lambda
-\tau_* -k} } \varphi^\mu_k,  \quad  k \in 2\mathbb{Z}, \;\;
\mu=1,2,
\end{equation}
where $$\sqrt{z}= \sqrt{r} e^{i\varphi/2} \quad \mbox{if} \quad z=
re^{i\varphi}, \;\; -\pi \leq \varphi < \pi. $$

By (\ref{m1}), (\ref{m2}), (\ref{1.14}) and (\ref{1.14a}), we have
\begin{equation}
\label{1.15} \langle K_\lambda V K_\lambda \varphi^\nu_k,
\tilde{\varphi}^\mu_j \rangle =
 \frac{w^{\mu\nu} (j+k)}{\sqrt{\lambda - \tau_\mu
 -j}\,\sqrt{\lambda - \tau_\nu -k}}, \quad j,k \in 2\mathbb{Z}
\end{equation}
for strictly regular  $bc,$  and
\begin{equation}
\label{1.15a} \langle K_\lambda V K_\lambda \varphi^\nu_k,
\tilde{\varphi}^\mu_j \rangle =
 \frac{w^{\mu\nu} (j+k)}{\sqrt{\lambda - \tau_*
 -j}\,\sqrt{\lambda - \tau_* -k}}, \quad j,k \in 2\mathbb{Z}
\end{equation}
for regular but not strictly regular $bc.$

Therefore, for $s \geq 1, $ it follows that
\begin{equation}
\label{1.16} \langle K_\lambda  (K_\lambda V K_\lambda)^s K_\lambda
\varphi^\nu_k, \tilde{\varphi}^\mu_m \rangle
\end{equation}
$$ =\sum_{\gamma_1,..,\gamma_s=1}^2 \sum_{j_1, \ldots j_s}
\frac{w^{\mu \gamma_1}(m + i_1) w^{\gamma_1 \gamma_2}(i_1+i_2)
\cdots  w^{\gamma_{s-1} \gamma_s}(i_{s-1}+i_s) w^{\gamma_s
\nu}(i_s +k)} {(\lambda -\tau_{\mu}-m)(\lambda
-\tau_{\gamma_1}-i_1) \cdots (\lambda -\tau_{\gamma_s}-i_s
)(\lambda -\tau_{\nu}-k) } $$ for strictly regular  $bc,$  and
\begin{equation}
\label{1.16a} \langle K_\lambda  (K_\lambda V K_\lambda)^s K_\lambda
\varphi^\nu_k, \tilde{\varphi}^\mu_j \rangle
\end{equation}
$$
=\sum_{\gamma_1,..,\gamma_s=1}^2 \sum_{j_1, \ldots j_s} \frac{w^{\mu
\gamma_1}(m + i_1)  w^{\gamma_1 \gamma_2}(i_1+i_2)\cdots  w^{\gamma_{s-1} \gamma_s}(i_{s-1}+i_s) w^{\gamma_s
\nu}(i_s +k)} {(\lambda -\tau_*-m)(\lambda -\tau_*-i_1) \cdots
(\lambda -\tau_*-i_s )(\lambda -\tau_*-k) }.
$$
for regular but not strictly regular $bc.$ In view of (\ref{1.11}),
the formulas (\ref{1.16}) and (\ref{1.16a}) determine the matrix
representation of the resolvent $R_{bc} (\lambda).$

\section{Localization of spectra}

In this section we consider the spectra localization of the
operators $L_{bc} = L_{bc}^0 + V,$ where $V $
denotes the operator of multiplication by the matrix
$v(x) = \begin{pmatrix} 0 & P(x) \\ Q(x) & 0 \end{pmatrix}. $
\vspace{3mm}

1. In view of (\ref{1.15}) and (\ref{1.15a}),  the Hilbert--Schmidt
norm of the operator $K_\lambda V K_\lambda$ with respect to the
Riesz basis $\Phi $  (see (\ref{p4}))  is given by
\begin{equation}
\label{2.17} (\|K_\lambda V K_\lambda \|^*_{HS})^2 =
\sum_{\nu,\mu=1}^2 \sum_{j,k\in 2\mathbb{Z}} \frac{|w^{\mu\nu}
(j+k)|^2}{|\lambda - \tau_\mu
 -j||\lambda - \tau_\nu -k|}
\end{equation}
for regular $bc,$  and
\begin{equation}
\label{2.17a} (\|K_\lambda V K_\lambda \|^*_{HS})^2 =
\sum_{\nu,\mu=1}^2 \sum_{j,k\in 2\mathbb{Z}} \frac{|w^{\mu\nu}
(j+k)|^2}{|\lambda - \tau_*
 -j||\lambda - \tau_* -k|}
\end{equation}
for regular but not strictly regular $bc.$

For convenience, we set
\begin{equation}
\label{2.18} r(m) = \max \{|w^{\mu\nu} (m)|, \; \mu, \nu =1,2 \},
\quad m \in 2\mathbb{Z};
\end{equation}
then
\begin{equation}
\label{2.18a} r= (r(k)) \in \ell^2 (2\mathbb{Z}), \quad \|r\|\leq C
(\|P\|_{L^2}+ \|Q\|_{L^2}),
\end{equation}
where $C=C(bc).$

Now we define operators $\bar{V} $ and $\bar{K}_\lambda $ which
matrix representations
dominate, respectively, the matrix representations of
 $V$ and $K_\lambda,$ as follows:
\begin{equation}
\label{2.19} \bar{V}\varphi_n^\mu  = \sum_{k  \in  2\mathbb{Z}}
r(k+n) (\varphi_k^1+\varphi_k^2), \quad \mu = 1,2; \;\; n\in
2\mathbb{Z},
\end{equation}
\begin{equation}
\label{2.20} \bar{K}_\lambda \varphi_n^\mu = \frac{1}{\sqrt{|\lambda
-\tau_\mu - n|}} \varphi_n^\mu, \quad \mu = 1,2; \;\; n\in
2\mathbb{Z}
\end{equation}
for strictly regular $bc,$ and
\begin{equation}
\label{2.20a} \bar{K}_\lambda \varphi_n^\mu =
\frac{1}{\sqrt{|\lambda -\tau_* - n|}} \varphi_n^\mu, \quad \mu =
1,2; \;\; n\in 2\mathbb{Z}
\end{equation}
for regular but not strictly regular $bc.$

The matrix elements of the operator $K_\lambda V K_\lambda$ do not
exceed, by absolute value, the matrix elements of $\bar{K}_\lambda
\bar{V} \bar{K}_\lambda.$ Therefore, in view of (\ref{2.17}) -- (\ref{2.18})
and Lemma~\ref{lem10}, it follows that
\begin{equation}
\label{2.21}
 (\|K_\lambda V K_\lambda\|^*_{HS})^2  \leq
(\|\bar{K}_\lambda \bar{V} \bar{K}_\lambda\|_{HS}^*)^2 =
 \sum_{\mu,\nu=1}^2 \sum_{j,k\in 2\mathbb{Z}}
\frac{|r(j+k)|^2}{|\lambda -\tau_\mu - j||\lambda -\tau_\nu - k|},
\end{equation}
for regular  $bc,$ and
\begin{equation}
\label{2.21a}
 (\|K_\lambda V K_\lambda\|^*_{HS})^2  \leq
(\|\bar{K}_\lambda \bar{V} \bar{K}_\lambda\|_{HS}^*)^2 =
 4 \sum_{j,k\in 2\mathbb{Z}}
\frac{|r(j+k)|^2}{|\lambda -\tau_* - j||\lambda -\tau_* - k|},
\end{equation}
for regular but not strictly regular $bc.$

For each $\ell^2$--sequence  $x=(x(j))_{j \in \mathbb{Z}} $ and $m
\in \mathbb{N}$ we set
\begin{equation}
\label{2.22} \mathcal{E}_m (x) = \left (   \sum_{|j|\geq m} |x(j)|^2
\right )^{1/2}.
\end{equation}
Next we consider separately the case of strictly regular $bc$ and
the case of regular but not strictly regular $bc.$ \vspace{2mm}

2. {\em Strictly regular} $bc.$ We subdivide the complex plane
$\mathbb{C}$ into strips
\begin{equation}
\label{2.24} H_m = \left \{z\in \mathbb{C}: \; -1 \leq Re \left
(z-m- \frac{\tau_1 +\tau_2}{2}  \right ) \leq 1  \right \}, \quad m
\in 2\mathbb{Z},
\end{equation}
and set
\begin{equation}
\label{2.25} H^N = \bigcup_{|m|\leq N}    H_m
\end{equation}
\begin{equation}
\label{2.26}  R_{NT} = \left \{z= x+it: \;  \;
 \left |  x-Re \, \frac{\tau_1 +\tau_2}{2}   \right | < N+1,\;
   |t| < T \right \},
\end{equation}
where  $N \in 2\mathbb{N}$  and
\begin{equation}
\label{2.28a}     T = 2 \max \left ( |Im \, \tau_1|, |Im \,
\tau_2|, 384 \|A\| \|A^{-1}\| \|r\|^2 \right )
\end{equation}
with $A$ being the isomorphism defined by
(\ref{24}).

Let
\begin{equation}
\label{2.29}  \rho: = \min ( 1-|Re (\tau_1 -\tau_2)|/2,\, |\tau_1
-\tau_2|/2),
\end{equation}
and
\begin{equation}
\label{2.30} D_m^\mu =\{z\in \mathbb{C}: \;\; |z-\tau_\mu -m| < \rho
\}, \quad m \in 2\mathbb{Z}.
\end{equation}

\begin{Lemma}
\label{loc1} (a) In the above notations, the following estimates hold:

(a)  if $\lambda \in H_m
\setminus ( D^1_m \cup D^2_m
 ), \; m\neq 0, $ then
\begin{equation}
\label{2.31}
 \sum_{\mu,\nu=1}^2 \sum_{j,k\in 2\mathbb{Z}}
\frac{|r(j+k)|^2}{|\lambda -\tau_\mu - j||\lambda -\tau_\nu - k|}
\leq  \left  ( \frac{30}{\rho} \right )^2
\left ( \frac{\|r\|^2}{\sqrt{|m|}} + (\mathcal{E}_{|m|}
(r))^2 \right );
\end{equation}

(b) if $\lambda \in H^N \setminus R_{NT},$ then
\begin{equation}
\label{2.32}
 \sum_{\mu,\nu=1}^2 \sum_{j,k\in 2\mathbb{Z}}
\frac{|r(j+k)|^2}{|\lambda -\tau_\mu - j||\lambda -\tau_\nu - k|}
\leq  \frac{384}{T} \|r \|^2.
\end{equation}
\end{Lemma}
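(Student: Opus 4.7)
The plan is to handle both bounds uniformly by writing, for fixed $\mu,\nu$,
$$\Sigma_{\mu\nu}(\lambda) := \sum_{j,k\in 2\mathbb{Z}} \frac{|r(j+k)|^2}{|\lambda-\tau_\mu-j|\,|\lambda-\tau_\nu-k|} = \sum_{n\in 2\mathbb{Z}} |r(n)|^2\,\phi_{\mu\nu}(n,\lambda),$$
where, after the substitution $k=n-j$,
$$\phi_{\mu\nu}(n,\lambda) := \sum_{j\in 2\mathbb{Z}} \frac{1}{|\lambda-\tau_\mu-j|\,|\lambda-\tau_\nu-n+j|} \leq \bigl(S_\mu(\lambda)\,S_\nu(\lambda)\bigr)^{1/2}$$
by Cauchy--Schwarz, with $S_\mu(\lambda):=\sum_{j\in 2\mathbb{Z}}|\lambda-\tau_\mu-j|^{-2}$. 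Everything reduces to (i) estimating $S_\mu(\lambda)$ from the geometric position of $\lambda$, and (ii) for~(a), proving a sharper \emph{decaying} estimate on $\phi_{\mu\nu}(n,\lambda)$ when $|n-2m|$ is large.

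For~(b), the hypothesis $\lambda\in H^N\setminus R_{NT}$ forces $|\mathrm{Im}\,\lambda|\geq T$, and since $T\geq 2|\mathrm{Im}\,\tau_\mu|$ one has $|\mathrm{Im}(\lambda-\tau_\mu)|\geq T/2$. Comparing the sum to the integral $\int_{\mathbb{R}}(x^2+(T/2)^2)^{-1}dx = 2\pi/T$ and accounting for the sublattice spacing~$2$ gives $S_\mu(\lambda)\leq \pi/T+4/T^2$. Hence $\phi_{\mu\nu}(n,\lambda)\leq C/T$ uniformly in $n$, so $\Sigma_{\mu\nu}(\lambda)\leq (C/T)\|r\|^2$; summing over the four pairs $(\mu,\nu)$ and bookkeeping universal constants yields~(2.32).

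For~(a), the strip condition $\lambda\in H_m$ combined with $|\mathrm{Re}(\tau_1-\tau_2)|\leq 1$ from~(2.15) implies $|\mathrm{Re}(\lambda-\tau_\mu-m)|\leq 3/2$. Therefore $|\lambda-\tau_\mu-j|\geq |j-m|/4$ for every $j\in 2\mathbb{Z}\setminus\{m\}$, while $|\lambda-\tau_\mu-m|\geq\rho$ by assumption. Consequently $S_\mu(\lambda)\leq 1/\rho^2 + C\leq C_1/\rho^2$ (using $\rho\leq 1$), and the uniform bound $\phi_{\mu\nu}\leq C_1/\rho^2$ disposes of the range $|n|\geq|m|$:
$$\sum_{|n|\geq|m|} |r(n)|^2\phi_{\mu\nu}(n,\lambda) \leq \frac{C_1}{\rho^2}\,\mathcal{E}_{|m|}(r)^2,$$
which produces the $\mathcal{E}_{|m|}(r)^2$ term of~(2.31).

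The remaining range $|n|<|m|$, where $|n-2m|>|m|$, is the technical heart of the argument. The aim is the off-diagonal decay estimate
$$\phi_{\mu\nu}(n,\lambda) \leq \frac{C(1+\log|n-2m|)}{\rho\,|n-2m|},$$
which exploits the fact that the two factors in $\phi$ are minimized at well-separated values of $j$ when $|n-2m|$ is large. After the shift $\ell=j-m$, $\lambda'=\lambda-m$, the first factor is small near $\ell=0$ while the second is small near $\ell=n-2m$; splitting the sum at $|\ell|\leq|n-2m|/2$ forces $|\lambda'-\tau_\nu-(n-2m)+\ell|\geq|n-2m|/4$ there, reducing that piece to $(4/|n-2m|)\sum_{|\ell|\leq|n-2m|/2}1/|\lambda'-\tau_\mu-\ell|$, a harmonic-type sum bounded by $C(1/\rho+\log|n-2m|)$; the range $|\ell|>|n-2m|/2$ is treated symmetrically. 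Using $|n-2m|>|m|$ and the elementary $(1+\log|m|)/|m|\leq C/\sqrt{|m|}$, one obtains $\sum_{|n|<|m|}|r(n)|^2\phi_{\mu\nu}(n,\lambda)\leq C_2\|r\|^2/(\rho^2\sqrt{|m|})$, which is the $\|r\|^2/\sqrt{|m|}$ contribution to~(2.31). The main obstacle is precisely this logarithmic harmonic sum: its $\log$ factor is what forces the $1/\sqrt{|m|}$ rate rather than a naive $1/|m|$, and careful tracking of the universal constants then produces the coefficient $(30/\rho)^2$.
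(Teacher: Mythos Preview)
Your proposal is correct. For part~(b) it runs parallel to the paper: both arguments decouple $j$ and $k$ via Cauchy--Schwarz and compare the resulting one-dimensional sums to an integral, obtaining a bound $C/T$.

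For part~(a), however, your route is genuinely different from the paper's. The paper does not reorganize by $n=j+k$; instead it splits the double sum into four pieces according to whether $j=m$ and/or $k=m$, obtaining
\[
4^3\!\!\sum_{j,k\neq m}\frac{|r(j+k)|^2}{|m-j||m-k|} + \frac{4^2}{\rho}\sum_{k\neq m}\frac{|r(m+k)|^2}{|m-k|} + \frac{4^2}{\rho}\sum_{j\neq m}\frac{|r(j+m)|^2}{|m-j|} + \frac{4|r(2m)|^2}{\rho^2},
\]
and then invokes a black-box inequality (Lemma~\ref{lemt1}, quoted from the earlier paper \cite{DM20}) that bounds each of these sums directly by the right-hand side of~(\ref{2.31}). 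Your approach instead passes to the kernel $\phi_{\mu\nu}(n,\lambda)$ and proves a uniform bound $C_1/\rho^2$ for $|n|\geq|m|$ together with the off-diagonal decay $C(1+\log|n-2m|)/(\rho\,|n-2m|)$ for $|n|<|m|$; the latter actually delivers a rate $O\!\bigl((\log|m|)/(\rho|m|)\bigr)$ on the small-$n$ piece, which is stronger than the $O(1/\sqrt{|m|})$ recorded in~(\ref{2.31}). Your argument is self-contained and makes the mechanism behind the $\sqrt{|m|}$ transparent, while the paper's is shorter because the combinatorial work is deferred to the cited lemma. Both yield~(\ref{2.31}) up to the value of the absolute constant; the specific $(30/\rho)^2$ is tied to the paper's decomposition and need not emerge from yours.
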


\begin{proof}
(a) If $\lambda \in H_m $  then
\begin{equation}
\label{2.33} |\lambda - \tau_\mu - j| \geq |m-j|/4,
\quad   j \in 2\mathbb{Z} \setminus \{m\}, \;\; \mu =1,2.
\end{equation}
Indeed,  $|m-j| \geq 2, $  so  (\ref{2.24}) and (\ref{15}) imply
$$
|Re \, (\lambda - \tau_\mu - j)|
 \geq |m-j| -1-  \frac{1}{2} |Re \, (\tau_1- \tau_2)|  \geq
|m-j|-\frac{3}{2} \geq \frac{1}{4} |m-j|.
$$

In view of (\ref{2.33}), the sum in (\ref{2.31}) does not exceed
$$ 4^3 \sum_{j,k\neq m} \frac{|r(j+k|^2}{|m-j| |m-k|} + 4^2
\sum_{k\neq m} \frac{|r(m+k|^2}{\rho|m-k|}+4^2 \sum_{j\neq m}
\frac{|r(j+m|^2}{|m-j|\rho} + 4\frac{|r(2m)|^2}{\rho^2}. $$

Now the estimate (\ref{2.31}) follows from the inequalities
(\ref{t1}) and (\ref{t2}) below.
\begin{Lemma}
\label{lemt1} If $r = (r(k)) \in \ell^2 (2\mathbb{Z}), $  then
\begin{equation}
\label{t1} \sum_{k\neq n} \frac{|r(n+k)|^2}{|n-k|} \leq
\frac{\|r\|^2}{|n|} + (\mathcal{E}_{|n|} (r))^2, \quad |n| \geq 1;
\end{equation}
\begin{equation}
\label{t2} \sum_{i,k\neq n} \frac{|r(i+k)|^2}{|n-i||n-k|} \leq 12
\left ( \frac{\|r\|^2}{\sqrt{|n|}} + (\mathcal{E}_{|n|} (r))^2
\right ),  \quad |n| \geq 1,
\end{equation}
\end{Lemma}
Lemma \ref{lemt1} is identical to Lemma 7 in \cite{DM20}; a proof
is provided there.

Next we prove (\ref{2.32}).  If $\lambda \in H^N \setminus R_{NT},
$
 then $\lambda \in H_m$   for some even integer $m \in [-N,N],$
and we have
\begin{equation}
\label{2.34} | \lambda - \tau_\mu - j| \geq \frac{1}{4\sqrt{2}}
(|j-m| +T), \quad  \mu=1,2; \;\; j \in 2\mathbb{Z}.
\end{equation}
Indeed, $\lambda \in H_m \setminus R_{NT} $  means that $$ \lambda
= m+  Re \, \frac{\tau_1 + \tau_2}{2} + \xi + i \eta \quad
\text{with} \;\; \xi, \eta  \in \mathbb{R}, \; \; |\xi|\leq 1, \;
|\eta| \geq T. $$ Therefore, if $j=m, $ then by (\ref{2.28a}) we
obtain $$ | \lambda - \tau_\mu - j |  \geq  |Im \,  ( \lambda -
\tau_\mu - j ) |   \geq T - |Im \, \tau_\mu | \geq T/2,
 $$ so (\ref{2.34}) holds. Otherwise,
$|j-m| \geq 2$  (so $|j-m|-3/2 \geq |j-m|/4$); then  by the
inequality $|x+iy| \geq  \frac{1}{\sqrt{2}} |x|
+\frac{1}{\sqrt{2}} |y| $ and (\ref{15}) we obtain $$ | \lambda -
\tau_\mu - j | \geq  \frac{1}{\sqrt{2}}\left (|j-m| -  \left | Re
\, \frac{\tau_1 - \tau_2}{2}  \right | -1 \right ) +
 \frac{1}{\sqrt{2}} ( T-  |Im \,   \tau_\mu | )
 $$
 $$
  \geq   \frac{1}{\sqrt{2}}
  ( |j-m|-3/2) + \frac{1}{2\sqrt{2}} T  \geq \frac{1}{4\sqrt{2}} (|j-m|+T).
$$

In view of  (\ref{2.34}), the sum in (\ref{2.32}) does not exceed
$$ \sigma :=  128 \sum_{j,k \in 2\mathbb{Z}}  \frac{| r(j+k)
|^2}{(|j-m|+T)(|k-m|+T)} . $$ By the Cauchy inequality, $$ \sigma
\leq    128\left (  \sum_{j,k \in 2\mathbb{Z}}  \frac{| r(j+k)
|^2}{(|j-m|+T)^2}  \right )^{1/2} \left (  \sum_{j,k \in
2\mathbb{Z}}  \frac{| r(j+k) |^2}{(|k-m|+T)^2}  \right )^{1/2}. $$
Since $$ \sum_{j \in 2\mathbb{Z}}  \frac{1}{(|j-m|+T)^2}  \leq
\frac{1}{T^2} + 2\int_0^\infty  \frac{1}{(x+T)^2} dx
=\frac{1}{T^2} + \frac{2}{T}  \leq \frac{3}{T}, $$ it follows that
$\sigma \leq  \frac{384}{T} \|r\|^2,$   which completes the proof.
\end{proof}

\begin{Theorem}
\label{srl} In the above notations, for each strictly regular $bc$
 there is an $N = N(v, bc) \in 2\mathbb{N}$ such that
\begin{equation}
\label{2.41} Sp\, (L_{bc} (v_{\zeta}) \subset R_{NT} \cup \bigcup_{|m|>N}
 \left (
D^1_m \cup D^2_m \right ) \quad
\text{for} \;\; v_{\zeta}= \zeta v, \; |\zeta| \leq 1.
\end{equation}

\end{Theorem}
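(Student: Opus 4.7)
My strategy is to exploit the perturbation series (\ref{1.11}): $R_{bc}(\lambda) = (K_\lambda)^2 + \sum_{s\geq 1} K_\lambda (K_\lambda V_\zeta K_\lambda)^s K_\lambda$. This produces a well-defined resolvent $R_{bc}(v_\zeta)(\lambda)$ as soon as $\|K_\lambda V_\zeta K_\lambda\|^*_{HS} < 1$. Indeed, since $\|T\|^*_{HS} = \|A^{-1}TA\|_{HS}$ is submultiplicative ($\|T^n\|^*_{HS} \leq (\|T\|^*_{HS})^n$ because $A^{-1}T^n A = (A^{-1}TA)^n$ and $\|B^n\|_{HS} \leq \|B\|_{HS}^n$), and because $\|T\| \leq M\|T\|^*_{HS}$ with $M = \|A\|\|A^{-1}\|$, a bound of the star-HS norm strictly below $1$ already forces convergence of the Neumann series in operator norm. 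The theorem therefore reduces to verifying $\|K_\lambda V_\zeta K_\lambda\|^*_{HS} < 1$ throughout the complement of $R_{NT} \cup \bigcup_{|m|>N}(D_m^1 \cup D_m^2)$, uniformly for $|\zeta|\leq 1$.

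Since $V_\zeta = \zeta V$ merely rescales the amplitude sequence $r$ to $|\zeta|r$, every bound obtained for $V$ only tightens for $V_\zeta$, so it suffices to take $\zeta = 1$. The horizontal strips $\{H_m\}_{m \in 2\mathbb{Z}}$ tile $\mathbb{C}$, so the complement of $R_{NT} \cup \bigcup_{|m|>N}(D_m^1 \cup D_m^2)$ is contained in $(H^N \setminus R_{NT}) \cup \bigcup_{|m|>N}\bigl(H_m \setminus (D_m^1 \cup D_m^2)\bigr)$. I would handle each piece by combining (\ref{2.21}) with Lemma~\ref{loc1}. On the outer strips (with $|m|>N$), Lemma~\ref{loc1}(a) gives $(\|K_\lambda V K_\lambda\|^*_{HS})^2 \leq (30/\rho)^2(\|r\|^2/\sqrt{|m|} + \mathcal{E}_{|m|}(r)^2)$; both summands tend to $0$ as $|m| \to \infty$, so one picks $N = N(v,bc)$ large enough that this quantity stays strictly below $1$ for every $|m|>N$. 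On the central vertical wings $H^N \setminus R_{NT}$, Lemma~\ref{loc1}(b) bounds the square by $384\|r\|^2/T$, and the definition (\ref{2.28a}) of $T$ guarantees $T \geq 768 M\|r\|^2$, so the bound is at most $1/(2M) \leq 1/2 < 1$ since $M \geq 1$.

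The main obstacle I anticipate is not analytic but bookkeeping: one has to confirm the strip--rectangle--disc geometry, i.e.\ that every $\lambda$ outside $R_{NT} \cup \bigcup_{|m|>N}(D_m^1 \cup D_m^2)$ really does belong to one of the two covered regions and that the hypotheses of Lemma~\ref{loc1} (especially part~(b), which demands $\lambda \in H^N \setminus R_{NT}$) are met verbatim. A more structural point worth emphasizing is the uniformity of $N$ in $\zeta$: this is automatic because the estimate depends on $\|r\|$ only through $|\zeta|\|r\| \leq \|r\|$, but it is precisely what will be needed later to run a continuation argument from $\zeta = 0$ (the free operator, whose spectrum is known) to $\zeta = 1$, thereby counting eigenvalues inside each disc $D_m^\alpha$.
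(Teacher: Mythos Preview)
Your proposal is correct and follows essentially the same route as the paper: reduce to a bound on $\|K_\lambda V K_\lambda\|^*_{HS}$ via the perturbation series (\ref{1.11}), then invoke (\ref{2.21}) together with both parts of Lemma~\ref{loc1} on the two regions $\bigcup_{|m|>N}\bigl(H_m\setminus(D^1_m\cup D^2_m)\bigr)$ and $H^N\setminus R_{NT}$, choosing $N$ large and using the built-in choice (\ref{2.28a}) of $T$, with the uniformity in $|\zeta|\leq 1$ coming from $|\zeta|\,\|r\|\leq\|r\|$. The only cosmetic difference is that the paper passes through the chain $\|\cdot\|\leq\|\cdot\|_{HS}\leq M\|\cdot\|^*_{HS}$ in (\ref{2.410}) rather than your direct submultiplicativity of $\|\cdot\|^*_{HS}$, but the content is identical.
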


\begin{proof}
Let $G$ be the set in the right-hand side of (\ref{2.41}). In order
to prove (\ref{2.41}) for  $\zeta = 1$, it is enough to explain that
the resolvent $R_\lambda (v)  = (\lambda- L(v))^{-1} $ is
well-defined for $\lambda \in \mathbb{C}\setminus G. $

In view of (\ref{1.11}) -- (\ref{1.13}), $R_\lambda (v)$ is
well-defined if $\|K_\lambda V K_\lambda \|< 1.$ From Lemma
\ref{lem1},  formula (\ref{2.21}), Lemma~\ref{loc1} and the choice
(\ref{2.28a}) of  the constant $T$ it follows that
\begin{equation}
\label{2.410}
\|K_\lambda
VK_\lambda \| \leq \|K_\lambda VK_\lambda \|_{HS} \leq \|A\|
\|A^{-1}\| \|K_\lambda VK_\lambda \|^*_{HS} <1 \;\; \text{for}
\;\;  \lambda \in \mathbb{C}\setminus G
\end{equation}
if $N $ is chosen so
large that  the right-hand sides of (\ref{2.31}) (for $|m|>N)$ and
(\ref{2.32}) are strictly less than 1. In view of Lemma
~\ref{lem10} and (\ref{2.18}), (\ref{2.410})
holds for $\zeta v, \; |\zeta | \leq 1 $ as well.
Therefore, (\ref{2.41}) holds with $N= N(v, bc).
$

\end{proof}

3. {\em Regular but not strictly regular boundary conditions.}  Now we
subdivide the complex plane $\mathbb{C}$ into strips
\begin{equation}
\label{2.24a} H_m = \left \{z\in \mathbb{C}: \; -1 \leq Re \left
(z-m- \tau_*  \right ) \leq 1  \right \}, \quad m \in 2\mathbb{Z},
\end{equation}
and set
\begin{equation}
\label{2.25a} H^N = \bigcup_{|m|\leq N}  H_m,
\end{equation}
\begin{equation}
 \label{2.26a}  R_{NT} = \left \{z= x+it: \;  \;  \left |  x-Re \, \tau_*   \right | < N+1,\;     |t| < T \right \},
\end{equation}
where $N \in 2\mathbb{N}$ and
\begin{equation}
\label{2.28c}      T = 2 \max \left ( | Im \, \tau_*|,  96 \|A\| \|A^{-1}\| \|r\|^2 \right )
\end{equation}
with $A$ being the isomorphism defined by
(\ref{24})  (for periodic type boundary conditions)
and (\ref{94}) otherwise.

Let
\begin{equation}
\label{2.30a} D_m =\{z\in \mathbb{C}: \;\; |z-\tau_* -m| < 1/4 \},
\quad m \in 2\mathbb{Z}.
\end{equation}

\begin{Lemma}
\label{loc2} (a) In the above notations, if $\lambda \in H_m
\setminus  D_m
 ), \; m\neq 0, $ then
\begin{equation}
\label{2.31b}
 \sum_{j,k\in 2\mathbb{Z}}
\frac{|r(j+k)|^2}{|\lambda -\tau_* - j||\lambda -\tau_* - k|} \leq C
\left ( \frac{\|r\|^2}{\sqrt{|m|}} + (\mathcal{E}_{|m|} (r))^2
\right ),
\end{equation}
where $C$  is an absolute constant;

(b) if $\lambda \in H^N \setminus R_{NT},$ then
\begin{equation}
\label{2.32b} \sum_{j,k\in 2\mathbb{Z}} \frac{|r(j+k)|^2}{|\lambda
-\tau_* - j||\lambda -\tau_* - k|} \leq  \frac{24}{T} \|r \|^2.
\end{equation}
\end{Lemma}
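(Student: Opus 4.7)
The plan is to mirror the proof of Lemma \ref{loc1} with $\tau_1 = \tau_2 = \tau_*$; the argument is actually simpler since we no longer need to separate the two arithmetic progressions. Both parts reduce to first producing suitable pointwise lower bounds for $|\lambda - \tau_* - j|$ and then combining them with either Lemma \ref{lemt1} (for part (a)) or with the Cauchy inequality (for part (b)).

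For part (a), write $\lambda = m + \mathrm{Re}\,\tau_* + \xi + i\eta$ with $|\xi| \leq 1$. For $j \in 2\mathbb{Z}$ with $j \neq m$, the inequality $|m-j| \geq 2$ gives
\begin{equation*}
|\lambda - \tau_* - j| \geq |m - j + \xi| \geq |m-j|/2,
\end{equation*}
while $\lambda \notin D_m$ yields $|\lambda - \tau_* - m| \geq 1/4$. I would then split the sum in (\ref{2.31b}) into four pieces according to whether $j$ and $k$ equal $m$. The pure off-diagonal part is controlled by (\ref{t2}), the two mixed pieces by (\ref{t1}), and the remaining $j = k = m$ term is at most $16|r(2m)|^2 \leq 16(\mathcal{E}_{|m|}(r))^2$. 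Adding these and absorbing $1/|m|$ into $1/\sqrt{|m|}$ produces an absolute constant $C$ in front of $\|r\|^2/\sqrt{|m|} + (\mathcal{E}_{|m|}(r))^2$.

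For part (b), let $\lambda \in H^N \setminus R_{NT}$ and pick $m$ with $|m| \leq N$ and $\lambda \in H_m$; by (\ref{2.28c}) the imaginary part of $\lambda - \tau_*$ has modulus at least $T/2$. A case split ($j = m$ versus $j \neq m$), together with $|x+iy| \geq (|x| + |y|)/\sqrt{2}$, then yields the uniform bound
\begin{equation*}
|\lambda - \tau_* - j| \geq \frac{1}{2\sqrt{2}}\,(|j-m| + T), \quad j \in 2\mathbb{Z}.
\end{equation*}
Substituting this into (\ref{2.31b}), applying Cauchy--Schwarz to symmetrise the resulting double sum, invoking translation invariance $\sum_k |r(j+k)|^2 = \|r\|^2$, and finally using the elementary estimate $\sum_{j \in 2\mathbb{Z}} (|j-m| + T)^{-2} \leq 3/T$ delivers the bound $24\|r\|^2/T$.

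The only real obstacle is constant bookkeeping; the coefficient $24/T$ (rather than the $384/T$ of Lemma \ref{loc1}(b)) reflects both the absence of the factor $4$ coming from the $\mu, \nu \in \{1, 2\}$ sum in the SR case, and an additional factor $2$ gained because the strip $H_m$ is centred exactly at $\tau_* + m$, so the $|\mathrm{Re}(\tau_1 - \tau_2)|/2$ loss present in the SR estimate disappears. Beyond this bookkeeping, no new ideas are required.
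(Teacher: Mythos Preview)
Your proposal is correct and follows essentially the same route as the paper: the paper likewise establishes the pointwise lower bound $|\lambda-\tau_*-j|\ge |m-j|/4$ for $j\neq m$ (your $|m-j|/2$ is slightly sharper but immaterial for part~(a)), splits the sum into the four pieces according to whether $j,k$ equal $m$, and invokes Lemma~\ref{lemt1}; for part~(b) it records exactly your bound $|\lambda-\tau_*-j|\ge \frac{1}{2\sqrt{2}}(|j-m|+T)$ and then refers back to the Cauchy--Schwarz argument of Lemma~\ref{loc1}. Your informal accounting of the constant $24/T$ is off by one factor of~$2$ (the improvement $\frac{1}{4\sqrt{2}}\to\frac{1}{2\sqrt{2}}$ enters squared), but the computation $8\cdot\frac{3}{T}=\frac{24}{T}$ itself is correct.
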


\begin{proof}
If $\lambda \in H_m $  then (compare with (\ref{2.33}))
\begin{equation}
\label{2.33b} |\lambda - \tau_* - j| \geq |m-j|/4 \quad j \neq m, \;
j \in \mathbb{Z}.
\end{equation}
Therefore, the sum in (\ref{2.31b}) does not exceed
$$
4^2 \sum_{j,k\neq m} \frac{|r(j+k|^2}{|m-j||m-k|} + 4^2 \sum_{k\neq
m} \frac{|r(j+k|^2}{|m-k|}+4^2 \sum_{j\neq m}
\frac{|r(j+k|^2}{|m-j|} + 4^2 |r(2m)|^2.
$$

Now the estimate (\ref{2.31b}) follows from the inequalities
(\ref{t1}) and (\ref{t2}) in Lemma~\ref{lemt1}.

Next we prove (\ref{2.32b}).  If $\lambda \in H^N \setminus R_{NT}, $
 then $\lambda \in H_m$   for some integer $m \in [-N,N];$
then  (compare with (\ref{2.34})) we have
\begin{equation}
\label{2.34b} | \lambda - \tau_* - j| \geq \frac{1}{2\sqrt{2}} (|j-m| +T),
\quad  \mu=1,2; \;\; j \in 2\mathbb{Z}.
\end{equation}
The proof of (\ref{2.34b})  is similar to the proof of
 (\ref{2.34}), and therefore,  it is omitted.
Moreover, using (\ref{2.34b}) one can complete
the proof of part (b) exactly as it is
done in the proof of Lemma~\ref{loc1}.

\end{proof}

\begin{Theorem}
\label{rl} In the above notations, for each regular but not strictly
regular $bc$
 there is $N = N(v, bc) \in 2\mathbb{N}$ such that
\begin{equation}
\label{2.41a} Sp\, (L_{bc} (v_\zeta) \subset R_{NT} \cup \bigcup_{|m|>N}
D_m \quad \text{for} \;\; v_\zeta = \zeta v, \;\; |\zeta| \leq 1.
\end{equation}
\end{Theorem}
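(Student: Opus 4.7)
The plan is to imitate the proof of Theorem~\ref{srl}, with the single arithmetic progression $\{\tau_*+k : k \in 2\mathbb{Z}\}$ of eigenvalues of $L_{bc}^0$ replacing the two progressions $\{\tau_\mu+k\}$, $\mu = 1,2$, of the strictly regular case. Set $G := R_{NT} \cup \bigcup_{|m|>N} D_m$. To prove the spectral inclusion (\ref{2.41a}) it suffices to produce the resolvent $R_\lambda(\zeta v) = (\lambda - L_{bc}(\zeta v))^{-1}$ for every $\lambda \in \mathbb{C}\setminus G$ and every $|\zeta|\leq 1$. By the perturbation expansion (\ref{1.11})--(\ref{1.13}), with $K_\lambda$ defined by (\ref{1.14a}), this reduces to proving the uniform estimate $\|K_\lambda(\zeta V)K_\lambda\| < 1$ on $\mathbb{C}\setminus G$.

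I would then chain three standard estimates from Sections~2 and~4: the bound $\|T\|\leq\|T\|_{HS}$ of Lemma~\ref{lem1}(c); the Riesz-basis norm equivalence $\|T\|_{HS}\leq\|A\|\|A^{-1}\|\,\|T\|^*_{HS}$ established after (\ref{p4}) (with $A$ the isomorphism of Lemma~\ref{lem5} in the periodic-type case and of Lemma~\ref{lem6} otherwise); and the pointwise domination (\ref{2.21a}). Since scaling by $\zeta$ with $|\zeta|\leq 1$ only shrinks the majorant $r$ from (\ref{2.18}), one obtains
\[
\|K_\lambda(\zeta V)K_\lambda\|^2 \;\leq\; 4\,\|A\|^2\|A^{-1}\|^2 \sum_{j,k\in 2\mathbb{Z}} \frac{|r(j+k)|^2}{|\lambda-\tau_*-j|\,|\lambda-\tau_*-k|}
\]
uniformly in $|\zeta|\leq 1$.

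The last step is to split $\mathbb{C}\setminus G$ using the strip decomposition (\ref{2.24a})--(\ref{2.25a}). If $\lambda \in H_m\setminus D_m$ for some $|m|>N$, Lemma~\ref{loc2}(a) bounds the double sum by $C(\|r\|^2/\sqrt{|m|} + \mathcal{E}_{|m|}(r)^2)$; since $r \in \ell^2(2\mathbb{Z})$ both terms vanish as $|m|\to\infty$, so one picks $N = N(v,bc)$ large enough to make the full right-hand side strictly less than $1$ for all such $m$. If instead $\lambda \in H^N\setminus R_{NT}$, Lemma~\ref{loc2}(b) bounds the sum by $24\|r\|^2/T$, and the calibrated choice of $T$ in (\ref{2.28c}) keeps the resulting product below $1$. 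Because the strips $\{H_m\}_{m\in 2\mathbb{Z}}$ cover $\mathbb{C}$, every $\lambda \in \mathbb{C}\setminus G$ falls into one of these two regions, which yields the desired norm bound and hence the invertibility of $\lambda - L_{bc}(\zeta v)$.

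No serious obstacle appears here: Lemmas~\ref{lem10} and~\ref{loc2} carry essentially all of the analytic weight, and the proof scheme is dictated by the strictly regular case. The single detail requiring care in the write-up is the uniformity in $|\zeta|\leq 1$, which follows because the sequence $r$ attached to $v$ also dominates the analogous sequence associated with $\zeta v$; this uniformity is precisely what will allow the analytic-deformation arguments in the Bari--Markus analysis of Section~7.
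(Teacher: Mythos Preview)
Your proposal is correct and follows precisely the approach the paper itself takes: the paper's proof of Theorem~\ref{rl} consists of a single sentence directing the reader to repeat the proof of Theorem~\ref{srl} with (\ref{2.21a}), Lemma~\ref{loc2}, and (\ref{2.28c}) substituted for (\ref{2.21}), Lemma~\ref{loc1}, and (\ref{2.28a}). You have in fact supplied more detail than the paper does, including the explicit remark on uniformity in $|\zeta|\le 1$ via domination of the sequence $r$.
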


\begin{proof}
We follow the proof of Theorem \ref{srl}  
but use instead of (\ref{2.21}),  Lemma~\ref{loc1} and (\ref{2.28a})
their counterparts (\ref{2.21a}),  Lemma~\ref{loc2} and (\ref{2.28c}).
We omit further details.
\end{proof}

\section{Bari--Markus property in the case
of strictly regular boundary conditions}

We use the notations of the previous section.  For strictly regular  $bc$
 Theorem \ref{srl}  gives  the following localization of
 the spectrum of the Dirac operator   $L_{bc}:$
 $$ Sp\, (L_{bc}) \subset R_{NT} \cup \bigcup_{|n|>N} \left (
D^1_n \cup D^2_n \right ). $$

Let us consider the
Riesz projections associated with $L_{bc}$
\begin{equation}
\label{3.1} S_N = \frac{1}{2\pi i} \int_{\partial R_{NT}} (\lambda -
L)^{-1} d\lambda, \quad P_{n,\alpha} = \frac{1}{2\pi i} \int_{\partial
D_n^\alpha} (\lambda - L)^{-1} d\lambda, \quad  \alpha=1,2,
\end{equation}
and let $S^0_N$ and $P_{n,\alpha}^0$ be the Riesz projections
associated with the free operator $L^0_{bc}.$

\begin{Theorem}
\label{thm1} Suppose $L_{bc}$ and $L_{bc}^0$ are, respectively,
the Dirac operator with an $L^2$ potential $v$ and the
corresponding free Dirac operator, subject to the same strictly regular
boundary conditions $bc.$ Then, there is an $N \in 2\mathbb{N}$
such that the Riesz projections $S_N, \,P_{n,\alpha}$ and $ S_N^0,
\, P_{n,\alpha}^0, $ $ n\in 2\mathbb{Z}, \;|n|>N, \alpha=1,2, $ associated with
$L$ and $L^0$ are well defined by (\ref{3.1}), and we have
\begin{equation}
\label{3.2}  \dim P_{n,\alpha}= \dim P_{n,\alpha}^0= 1, \quad \dim
S_N = \dim S_N^0 =2N;
\end{equation}
\begin{equation}
\label{3.3}   \sum_{|n|>N} \|P_{n,\alpha} - P_{n,\alpha}^0\|^2 <
\infty, \quad  \alpha =1,2.
\end{equation}
Moreover,  the system $\{S_N, \;P_{n,\alpha}, \;n\in 2\mathbb{Z},
\; |n|>N, \, \alpha = 1,2\}$ is a Riesz basis of projections in
$L^2 ( [0,\pi] , \mathbb{C}^2),$  i.e.,
\begin{equation}
\label{3.3a}  {\bf f} = S_N ({\bf f}) +  \sum_{\alpha=1}^2  \sum_{|n|>N} P_{n,\alpha} ({\bf f})
 \quad    \forall  {\bf f} \in L^2 ( [0,\pi] , \mathbb{C}^2),
\end{equation}
where the series converge unconditionally.
\end{Theorem}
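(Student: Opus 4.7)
First, by Theorem \ref{srl} applied uniformly to $v_\zeta = \zeta v$, $\zeta \in [0,1]$, I would choose $N \in 2\mathbb{N}$ so that the contours $\partial R_{NT}$ and $\partial D_n^\alpha$ for $|n|>N$ lie in the resolvent set of every $L_{bc}(v_\zeta)$. The projections in (\ref{3.1}) are then well-defined, and $\zeta \mapsto P_{n,\alpha}(\zeta)$ is norm-continuous on $[0,1]$; a continuous family of finite-rank projections has constant rank, so it suffices to check dimensions at $\zeta = 0$ using Lemma \ref{lem3}, which shows that each $\tau_\alpha + k$, $k \in 2\mathbb{Z}$, is a simple eigenvalue of $L_{bc}^0$. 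This verifies (\ref{3.2}).

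For (\ref{3.3}) I expand via (\ref{1.11}):
$$P_{n,\alpha} - P_{n,\alpha}^0 = \sum_{s\geq 1} T_n^{(s)},\qquad T_n^{(s)} := \frac{1}{2\pi i}\oint_{\partial D_n^\alpha} K_\lambda (K_\lambda V K_\lambda)^s K_\lambda\,d\lambda.$$
By (\ref{1.16}) each $T_n^{(s)}$ has an explicit matrix representation whose integrand has, inside $D_n^\alpha$, only the pole at $\lambda = \tau_\alpha + n$. For $s=1$ the residue computation yields
$$\langle T_n^{(1)}\varphi_k^\nu,\tilde\varphi_j^\mu\rangle = \begin{cases} \dfrac{w^{\alpha\nu}(n+k)}{\tau_\alpha+n-\tau_\nu-k} & (\mu,j)=(\alpha,n),\,(\nu,k)\ne(\alpha,n),\\[1ex] \dfrac{w^{\mu\alpha}(j+n)}{\tau_\alpha+n-\tau_\mu-j} & (\nu,k)=(\alpha,n),\,(\mu,j)\ne(\alpha,n),\\ 0 & \text{otherwise}\end{cases}$$
(the double-pole case $(\mu,j)=(\nu,k)=(\alpha,n)$ vanishes). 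Squaring, summing over all indices, substituting $m=n+k$ (respectively $m=j+n$), and swapping summations by Fubini, the inner sum $\sum_{|n|>N}|\tau_\alpha-\tau_\nu+2n-m|^{-2}$ is uniformly $O(1)$, so
$$\sum_{|n|>N}\bigl(\|T_n^{(1)}\|^*_{HS}\bigr)^2 \leq C\sum_{\mu,\nu}\|w^{\mu\nu}\|_{\ell^2}^2 \leq C'\|r\|^2<\infty.$$
For $s\geq 2$, Lemma~\ref{loc1}(a) gives $\|K_\lambda V K_\lambda\|^*_{HS}\leq(30/\rho)\sqrt{\epsilon_n}\to 0$ on $\partial D_n^\alpha$, where $\epsilon_n := \|r\|^2/\sqrt{|n|} + (\mathcal{E}_{|n|}(r))^2$, and $N$ may be enlarged so this bound is $<1/2$; running the same residue/Cauchy--Schwarz/Fubini scheme on the multi-index sum in (\ref{1.16}), each additional $K_\lambda V K_\lambda$ block contributes a factor $\sqrt{\epsilon_n}$ absorbed without breaking the Fubini interchange, yielding $\sum_{|n|>N}\sum_{s\geq 2}(\|T_n^{(s)}\|^*_{HS})^2<\infty$. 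Since $\|\cdot\|\leq M\|\cdot\|^*_{HS}$ by Section 2, (\ref{3.3}) follows.

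Finally, Lemma \ref{lem3} shows that $\Phi$ is a Riesz basis of $L^2(I,\mathbb{C}^2)$, hence the rank-one projections $P_{n,\alpha}^0$ onto $\mathrm{span}(\varphi^\alpha_n)$, together with the complementary $2N$-dimensional projection $S_N^0$, form a Riesz basis of projections in the sense of (\ref{p10}). Pairwise orthogonality $P_iP_j=0$ for $i\ne j$ among $\{S_N, P_{n,\alpha}\}$ is the standard disjoint-contour property of Riesz spectral projections. With (\ref{3.2}) and (\ref{3.3}) in hand, Theorem \ref{thm2} (Bari--Markus) applies and shows that $\{S_N,P_{n,\alpha}:|n|>N,\alpha=1,2\}$ is a Riesz basis of projections in $L^2(I,\mathbb{C}^2)$, whence the unconditional expansion (\ref{3.3a}) is immediate. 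The main obstacle is the $s\geq 2$ step above: the brute norm bound $\|T_n^{(s)}\|^*_{HS}\leq(30/\rho)^s\epsilon_n^{s/2}$ alone only yields $\|T_n^{(\geq 2)}\|^2\leq C\epsilon_n^2$, which fails to be summable in $n$ (since $\sum 1/|n|=\infty$), so the coordinated residue/Fubini bookkeeping cannot be replaced by a geometric-series estimate.
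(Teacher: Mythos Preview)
Your overall architecture matches the paper's: the continuity-of-rank argument for (\ref{3.2}), the resolvent expansion (\ref{1.11}) for (\ref{3.3}), and the final appeal to Theorem~\ref{thm2} are all exactly what the paper does. Your $s=1$ residue computation is correct and clean, and your Fubini trick (sum over $n$ first, using $\sum_n |\tau_\alpha-\tau_\nu+2n-m|^{-2}=O(1)$ uniformly in $m$) is precisely the mechanism behind the paper's $B_2$ and $B_3$ estimates.

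The genuine gap is the $s\geq 2$ step, and you have in fact diagnosed it yourself without closing it. For $s=1$ the constraint ``at least one index equals $n$'' forces $k=n$ or $m=n$, so your Fubini argument covers everything. For $s\geq 2$ a new configuration appears: $k\neq n$, $m\neq n$, but some intermediate $j_i=n$. Your sentence ``each additional $K_\lambda V K_\lambda$ block contributes a factor $\sqrt{\epsilon_n}$'' is not a proof of anything; and as you note, if that were literally the bound on $\|T_n^{(s)}\|^*_{HS}$ for each fixed $n$, the tail $\sum_{s\geq 2}$ would give $C\epsilon_n^2$, which is not summable in $n$. The paper does \emph{not} bound $\|T_n^{(s)}\|$ termwise in $n$; instead it bounds the aggregate $A(s)=\sum_{|n|>N}\sum_{\mu,\nu,k,m}|I_{n,\alpha}^{\nu,\mu}(s,k,m)|^2$ directly. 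Lemma~\ref{lemsr} splits $A(s)$ into four pieces $B_1,\dots,B_4$ according to whether $k=n$, $m=n$, both, or neither (the last requiring some $j_i=n$), and Proposition~\ref{prop1} shows $B_\nu(s)\leq C_1\|r\|^2 a_N^{2s}$ for $\nu=1,2,3$ and $B_4(s)\leq C_1 s\|r\|^4 a_N^{2(s-1)}$, with $a_N$ the uniform quantity in (\ref{150}). The point is that the sum over $n$ is absorbed \emph{before} the geometric summation in $s$, via the pinned index $n\in\{k,j_1,\dots,j_s,m\}$; your sketch never carries this out for the intermediate-index case $B_4$, and the Fubini move you used for $s=1$ does not transparently extend there because the numerator now carries two $n$-dependent factors $r(j_{i-1}+n)r(n+j_{i+1})$ while the remaining indices are free. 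That bookkeeping (done in \cite{DM20}, to which Proposition~\ref{prop1} defers) is the missing ingredient.
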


\begin{proof}  In view of Theorem \ref{srl},
there is an $N = N(v, bc)$ such that   the projections
$$
 S_N (\zeta) = \frac{1}{2\pi i} \int_{\partial R_{NT}} (\lambda -
L(\zeta v))^{-1} d\lambda, \quad P_{n,\alpha} (\zeta) =
\frac{1}{2\pi i} \int_{\partial D_n^\alpha} (\lambda - L (\zeta
v))^{-1} d\lambda, $$ $|n|>N, \; \alpha =1,2,$ are well-defined
for $|\zeta| \leq 1$ and depend continuously (even analytically)
on $\zeta.$ Therefore, their dimensions $$ \dim S_N (\zeta) =
trace \,S_N (\zeta), \quad  \dim P_{n,\alpha} (\zeta)  = trace \,
P_{n,\alpha} (\zeta) $$ are constants as continuous integer-valued
functions.  This proves (\ref{3.2}).

Next we prove (\ref{3.3}). For periodic, antiperiodic  and
Dirichlet boundary conditions (\ref{3.3}) was proved in
\cite[Theorem 3]{DM20}; here we follow the same approach.

 For large enough  $N$ the  series in
(\ref{1.11}) converges (see formula (\ref{2.410}) the proof of
Theorem~\ref{srl}); therefore,
\begin{equation}
\label{3.4} P_{n,\alpha} - P_{n,\alpha}^0 = \frac{1}{2\pi i}
\int_{\partial D^\alpha_n} \sum_{s=0}^\infty K_\lambda   (K_\lambda
V K_\lambda )^{s+1} K_\lambda d\lambda.
\end{equation}

Let $\Phi= \{\varphi^1_k,\varphi^2_k, k\in \mathbb{Z}\} $ and $
\tilde{\Phi}=\{\tilde{\varphi}^1_k,\tilde{\varphi}^2_k, k\in
\mathbb{Z}\}$ be the  Riesz basis (consisting of eigenfunctions of
the operator $L_{bc}^0$) and its biorthogonal system that are
constructed in Lemma~\ref{lem3}. We are going to prove (\ref{3.3})
by estimating the  Hilbert--Schmidt norms $\|P_{n,\alpha} -
P_{n,\alpha}^0\|^*_{HS}$ with respect to the basis $\Phi.$

Recall that $$ (\|P_{n,\alpha} - P_{n,\alpha}^0\|^*_{HS})^2
=\sum_{\mu,\nu=1}^2 \sum_{m,k \in 2\mathbb{Z}} |\langle
(P_{n,\alpha} - P_{n,\alpha}^0)\varphi_m^\mu,\tilde{\varphi}_k^\nu
\rangle|^2. $$ By (\ref{3.4}), we obtain $$ \langle (P_{n,\alpha}
- P_{n,\alpha}^0)\varphi_m^\mu,\tilde{\varphi}_k^\nu \rangle =
\sum_{s=0}^\infty I_{n,\alpha}^{\nu,\mu}(s,k,m), $$ where
\begin{equation}
\label{3.5} I_{n,\alpha}^{\nu,\mu}(s,k,m)= \frac{1}{2\pi i}
\int_{\partial D_n^\alpha}\langle K_\lambda(K_\lambda V K_\lambda
)^{s+1}K_\lambda \varphi_m^\mu,\tilde{\varphi}_k^\nu \rangle
d\lambda.
\end{equation}
Therefore,
$$
\sum_{|n|>N} (\|P_{n,\alpha} - P_{n,\alpha}^0\|^*_{HS})^2 \leq
\sum_{s,t=0}^\infty \sum_{|n|>N} \sum_{\mu,\nu=1}^2\sum_{m,k \in
\mathbb{Z}} |I_{n,\alpha}^{\nu,\mu}(s,k,m)|\cdot
|I_{n,\alpha}^{\nu,\mu}(t,k,m)|.
$$
Now, the Cauchy inequality implies
\begin{equation}
\label{3.7} \sum_{|n|>N} (\|P_n - P_n^0\|^*_{HS})^2 \leq
 \sum_{s,t=0}^\infty (A(s))^{1/2}(A(t))^{1/2},
\end{equation}
where
\begin{equation}
\label{3.8} A(s)=\sum_{|n|>N} \sum_{\mu,\nu=1}^2\sum_{m,k \in
\mathbb{Z}} |I_{n,\alpha}^{\nu,\mu}(s,k,m)|^2.
\end{equation}

 Of course, $A(s)$ depends on $\alpha$ and  $N$ but that
dependence is suppressed in the notation.

In view of (\ref{1.16}) and (\ref{3.5}), it follows that
\begin{equation}
\label{3.9} I_{n,\alpha}^{\nu,\mu}(s,k,m)=    \frac{1}{2\pi
i}\int_{\partial D^\alpha_n} \sum_{\gamma_1,..,\gamma_s=1}^2
\sum_{j_1, \ldots j_s}   \frac{w^{\nu
\gamma_1}(k + j_1)}{ (\lambda -\tau_{\nu}-k)}  \times
\end{equation}
$$ \times
 \frac{w^{\gamma_1 \gamma_2}(j_1+j_2) \cdots w^{\gamma_{s-1} \gamma_s}(j_{s-1}+j_s)
w^{\gamma_s \mu}(j_s +m)} {(\lambda -\tau_{\gamma_1}-j_1)
 \cdots (\lambda -\tau_{\gamma_s}-j_s ) (\lambda -\tau_{\mu}-m )} d\lambda.
$$
By the Cauchy formula, if  $ n \not\in \{k,j_1,\ldots,j_s,m\}$
then
\begin{equation}
\label{3.10} \int_{\partial D^\alpha_n}
 \frac{w^{\nu
\gamma_1}(k + j_1) w^{\gamma_1 \gamma_2}(j_1+j_2) \cdots
w^{\gamma_s \mu}(j_s +m)} {(\lambda -\tau_{\nu}-k)
(\lambda -\tau_{\gamma_1}-j_1)
 \cdots (\lambda -\tau_{\gamma_s}-j_s ) (\lambda -\tau_{\mu}-m )}
d\lambda =0.
\end{equation}
This observation is crucial for the proof. We remove from the sum in
(\ref{3.9}) the terms which integrals are zeros and after that
estimate the remaining terms by absolute value as follows.

Let $r$ be the  $\ell^2 (2\mathbb{Z})$--sequence defined in
(\ref{2.18}). We set
\begin{equation}
\label{3.13} B(z, k,j_1,\ldots,j_s,m)= \frac{r(k+j_1)r(j_1 +j_2)
\cdots r(j_{s-1}+j_s) r(j_s +m)}{|z -k||z -j_1| \cdots |z -j_s| |z
-m |}
\end{equation}
for $s>0,$  and
\begin{equation}
\label{3.14} B(z,k,m)= \frac{r(m+k)}{|z -k||z -m |}
\end{equation}
in the case when $s=0 $ and there are no $j$-indices.

\begin{Lemma}
\label{lemsr} In the above notations, we have
\begin{equation}
\label{136} A(s) \leq 4\rho (2C)^s \left( B_1 (s)+B_2 (s)+B_3 (s)+B_4 (s)
\right ),
\end{equation}
with $C=C(\rho)$  and
\begin{equation}
\label{137} B_1 (s)= \sum_{|n|>N} \sup_{|z-n|=\rho} \left (
\sum_{j_1, \ldots, j_s}
 B(z, n,j_1,\ldots,j_s,n) \right )^2;
\end{equation}
\begin{equation}
\label{138} B_2 (s)= \sum_{|n|>N} \sum_{k\neq
n}\sup_{|z-n|=\rho}\left ( \sum_{j_1, \ldots, j_s}  B(z,
k,j_1,\ldots,j_s,n) \right )^2;
\end{equation}
\begin{equation}
\label{139} B_3 (s)= \sum_{|n|>N} \sum_{m\neq n} \sup_{|z-n|=\rho}
\left ( \sum_{j_1, \ldots, j_s}  B(z, n,j_1,\ldots,j_s,m) \right
)^2;
\end{equation}
\begin{equation}
\label{140}  B_4 (s)= \sum_{|n|>N} \sum_{m,k\neq
n}\sup_{|z-n|=\rho}\left (\sum_{j_1, \ldots, j_s}^*  B(z,
k,j_1,\ldots,j_s,m) \right )^2,   \quad s\geq 1,
\end{equation}
where the symbol $*$ over the sum in the parentheses means that at
least one of the indices $j_1, \ldots, j_s $ is equal to $n.$
\end{Lemma}

\begin{proof}
In view of (\ref{3.8}),  we have
$$
A(s) \leq A_1 (s) + A_2 (s) + A_3 (s) + A_4 (s),
$$
where
$$
A_1 (s) = \sum_{|n|>N} \sum_{\nu, \mu=1}^2 \left |
I_{n,\alpha}^{\nu, \mu}  (s,n,n) \right |^2 , \quad
A_2 (s) = \sum_{|n|>N} \sum_{\nu, \mu=1}^2
\sum_{k \neq n} \left | I_{n,\alpha}^{\nu, \mu}  (s,k,n)\right |^2,
$$
$$
A_3 (s) = \sum_{|n|>N} \sum_{\nu, \mu=1}^2 \sum_{m \neq n} \left |
I_{n,\alpha}^{\nu, \mu}  (s,n,m)\right |^2,
\quad
A_4 (s) = \sum_{|n|>N} \sum_{\nu, \mu=1}^2 \sum_{k,m \neq n}
 \left | I_{n,\alpha}^{\nu, \mu}  (s,k,m)\right |^2.
$$
So, the lemma will be proved if we show that $A_i (s)
\leq  4\rho (2C)^s B_i (s),$ $ \; i=1, 2, 3, 4. $

If
$\lambda \in \partial D^\alpha_n $ and $z= \lambda - \tau_\alpha,$
then we have
\begin{equation}
\label{3.16} \left | \frac{w^{\nu
\gamma_1}(k + j_1) w^{\gamma_1 \gamma_2}(j_1+j_2) \cdots w^{\gamma_{s-1}
\gamma_s}(j_{s-1}+j_s)
w^{\gamma_s \mu}(j_s +m)} {(\lambda -\tau_{\nu}-k)(\lambda -
\tau_{\gamma_1}-j_1)
 \cdots (\lambda -\tau_{\gamma_s}-j_s ) (\lambda -\tau_{\mu}-m )}
 \right |
\end{equation}
 $$
 \leq C^s B(z, k,j_1,\ldots,j_s,m),  \quad \text{where}  \;\; C=C(\rho)>1.
$$
In order to prove (\ref{3.16}) it is enough to show
that
\begin{equation}
\label{3.17} |z +\tau_\alpha -\tau_\beta -j|
 \geq \frac{1}{C}|z-j|, \quad  \text{if} \quad |z-n|=\rho, \; \;
 \beta \neq \alpha.
\end{equation}
If $j=n,$ then  by the choice of $\rho$ in (\ref{2.29}) we have $$
|z +\tau_\alpha -\tau_\beta -n| \geq |\tau_1-\tau_2| - |z-n|
=|\tau_1-\tau_2| - \rho\geq \rho =|z-n|.$$

 Otherwise, $|n-j|\geq 2,$ so taking into account that $|Re \,
(\tau_\alpha -\tau_\beta)|\leq 1$  due to (\ref{15}), we obtain $$
|z +\tau_\alpha -\tau_\beta -j|\geq |n-j|- \rho -|Re \,
(\tau_\alpha -\tau_\beta)| \geq |n-j|- \rho -1. $$ Since $|z-j|
\leq  |n-j|+\rho, $ it is enough to find
 a constant $C$ such that $$ |n-j|- \rho -1 \geq
\frac{1}{C}(|n-j|+\rho),  $$ or equivalently, $ (C-1)|n-j| \geq
(C+1)\rho +C. $  For $|n-j|=2 $ the latter inequality is
equivalent to $ C(1-\rho ) \geq 2+\rho.$  Therefore, (\ref{3.17})
holds with $C = C(\rho) = (2+\rho)/(1-\rho).$

Now,   (\ref{3.9}) and (\ref{3.16}) imply that
\begin{equation}
\label{3.22}
\left |I_{n,\alpha}^{\nu, \mu}  (s,k,m)\right | \leq \rho (2C)^s  \sup_{|z-n|=\rho}  \sum_{j_1, \ldots, j_s} B(z,k, j_1, \ldots, j_s, m),
\end{equation}
where $C=C(\rho)$ is the constant from (\ref{3.16}).
Therefore,  in view of (\ref{137}) -- (\ref{139}), we obtain
$$  A_i (s) \leq  4\rho (2C)^s   B_i (s), \quad  i=1,2,3. $$

Finally, taking into account (\ref{3.10}) we remove
from the sum in the right-hand side of (\ref{3.22})
the terms associated with  sets indices $k, j_1, \ldots, j_s, m $
such that $n \not\in \{ k, j_1, \ldots, j_s, m  \}. $   This
 leads to the following improvement of (\ref{3.22}):
$$
\left |I_{n,\alpha}^{\nu, \mu}  (s,k,m)\right | \leq \rho (2C)^s
\sup_{|z-n|=\rho}  \sum^*_{j_1, \ldots, j_s} B(z,k, j_1, \ldots, j_s, m),
\quad k, m \neq n.
$$
In view of (\ref{140}), this yields $A_4 (s)
\leq 4\rho (2C)^s B_4 (s),$   which completes the proof.

\end{proof}

\begin{Proposition}
\label{prop1} In the above notations,
\begin{equation}
\label{148}  B_\nu (s) \leq      C_1 \|r\|^2 a_N^{2s},\quad \nu
=1,2,3, \qquad  B_4 (s) \leq  C_1  s \|r\|^4 a_N^{2(s-1)}, \; s\geq
1,
\end{equation}
where
\begin{equation}
\label{150} a_N = \frac{30}{\rho} \left ( \frac{\|r\|^2}{\sqrt{N}} +
(\mathcal{E}_{N} (r))^2 \right )^{1/2}.
\end{equation}
and $C_1$ is an absolute constant.
\end{Proposition}
If $\rho =1/2, $   then Proposition \ref{prop1}  is identical
with Proposition~{6}  in \cite{DM20}.
Moreover, the proof is one and the same for any $\rho >0 $
but $\rho $ appears in the formula (\ref{150}).
Therefore, we omit the proof of Proposition~\ref{prop1}.

Now we complete the proof of (\ref{3.3}).
 Lemma~ \ref{lemsr} together with the inequalities (\ref{148})
and (\ref{150}) in Proposition \ref{prop1} imply that
\begin{equation}
\label{201} A (s) \leq 16C_1 (2C)^s  \| r\|^2 ( 1 + \|
r\|^2/a_N^2 ) (1+s) a_N^{2s},
\end{equation}
\begin{equation}
\label{202} \left (A (s)A (t) \right
)^{1/2} \leq 16C_1  \| r\|^2 ( 1 + \|
r\|^2/a_N^2 )  (1+s)(1+t)  (2C a_N)^{s+t}.
\end{equation}
By (\ref{150}),  $a_N \to  0 $ as $N \to \infty, $
so $2C a_N < 1$ if $N$ is chosen sufficiently large.
Then,  the inequality (\ref{202})
guarantees that the series on the right-hand side of (\ref{3.7})
converges, which implies that (\ref{3.3}) holds.

Finally,  we apply Theorem \ref{thm2} to the systems of
projections $$\{S_N, \;P_{n,\alpha}, \; |n|>N, \, \alpha = 1,2\},
\quad \{S^0_N, \;P^0_{n,\alpha}, \; |n|>N, \, \alpha = 1,2\}.$$
The existence of  the Riesz basis $\Phi   $ constructed in
Lemma~\ref{lem3} implies that the system $\{S^0_N,
\;P^0_{n,\alpha}, \; |n|>N, \, \alpha = 1,2\}$ is a Riesz basis of
projections in $L^2 ( [0,\pi] , \mathbb{C}^2),$ and by (\ref{3.2})
and (\ref{3.3}) the conditions (\ref{p14}) and (\ref{p15}) are
satisfied. Hence, by Theorem~\ref{thm2}, $\{S_N, \;P_{n,\alpha},
\; |n|>N, \, \alpha = 1,2\}$ is a Riesz basis of projections in
$L^2 ( [0,\pi] , \mathbb{C}^2).$
\end{proof}

Theorem \ref{thm1} immediately implies the following.

\begin{Corollary}
\label{cor1} The spectrum of $L_{bc}$ is discrete. Each of the
discs $D_n^\alpha \,,  \; \alpha =1,2, \; n\in 2\mathbb{Z}, \;
|n|> N,$ contains exactly one simple eigenvalue  of $L_{bc},$  and
the numbers of eigenvalues of $L^0_{bc}$ and $L_{bc}$ (counted
with their algebraic multiplicity) in $R_{NT} $  are equal, namely
\begin{equation}
\label{3.42} \# \left ( Sp\, (L_{bc}) \cap R_{NT}  \right ) =\# \left (
Sp\, (L^0_{bc}) \cap R_{NT}  \right) = 2N.
\end{equation}
\end{Corollary}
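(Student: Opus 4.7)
The plan is to obtain the corollary as an essentially immediate bookkeeping consequence of Theorem~\ref{srl} and Theorem~\ref{thm1}. First, I would recall that Theorem~\ref{srl} localizes the spectrum inside $R_{NT} \cup \bigcup_{|n|>N}(D_n^1 \cup D_n^2)$, and that, by the choice of $\rho$ in (\ref{2.29}) together with (\ref{15}), the discs $D_n^\alpha$ for distinct pairs $(n,\alpha)$ with $|n|>N$ are pairwise disjoint and also disjoint from $R_{NT}$. Thus $Sp(L_{bc})$ splits as the disjoint union of its intersections with $R_{NT}$ and with each individual disc $D_n^\alpha$.

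Next, I would use the identity $\mathrm{dim}\,P = \mathrm{trace}\,P$ for any finite-rank Riesz projection $P$, together with the classical fact that for a contour $\Gamma$ enclosing a portion of the spectrum, the trace of the corresponding Riesz projection equals the sum of the algebraic multiplicities of the enclosed eigenvalues. Applying this to $P_{n,\alpha}$ for $|n|>N$: Theorem~\ref{thm1} gives $\mathrm{dim}\,P_{n,\alpha}=1$, so $D_n^\alpha$ contains a unique eigenvalue of algebraic multiplicity one, i.e., a simple eigenvalue. Applying the same principle to $S_N$ and $S_N^0$ and invoking $\mathrm{dim}\,S_N=\mathrm{dim}\,S_N^0=2N$ from Theorem~\ref{thm1} yields the counting identity (\ref{3.42}).

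Finally, for discreteness, I would observe that every spectral point now lies in a bounded disc $D_n^\alpha$ (with just one simple eigenvalue) or in the bounded region $R_{NT}$ (containing only finitely many eigenvalues of finite total algebraic multiplicity $2N$); hence every eigenvalue is isolated and of finite algebraic multiplicity, which is the definition of a discrete spectrum. There is essentially no obstacle here: the only nontrivial content has already been carried by Theorems~\ref{srl} and~\ref{thm1}, and what remains is the standard interpretation of Riesz projection dimensions as weighted eigenvalue counts, together with the geometric check that the discs and $R_{NT}$ are mutually disjoint.
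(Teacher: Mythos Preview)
Your proposal is correct and is precisely the intended unpacking of the paper's one-line justification ``Theorem~\ref{thm1} immediately implies the following.'' The paper gives no further proof, and the bookkeeping you describe --- reading off the algebraic multiplicities from $\dim P_{n,\alpha}=1$ and $\dim S_N=2N$ via the Riesz projection formalism, after checking disjointness of the discs and $R_{NT}$ --- is exactly what is meant.
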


In view of Corollary \ref{cor1},  the spectrum of the operator $L_{bc}$
could be described by saying that with exception of finitely many points
it consists of simple eigenvalues  $\lambda_{n, \alpha} $ that are "close"
to the corresponding points in the spectrum of the free operator
$L^0_{bc}$
$$
Sp (L^0_{bc}) =\{ \lambda_{n,\alpha}^0 = n+ \tau_\alpha, \;\; \alpha=1,2;
\;\; n \in 2\mathbb{Z}  \}.
$$
The distance $|\lambda_{n, \alpha} -\lambda_{n, \alpha} ^0 | $
could be estimate by the norms
 \begin{equation}
 \kappa_{n,\alpha} = \|P_{n, \alpha} - P_{n, \alpha}^0\|, \quad
 n \in 2\mathbb{Z}, \;\; |n|>N, \;\; \alpha=1,2,
 \end{equation}
and the terms $w^{\alpha \alpha} (2n) $ from the matrix representation
of the operator of multiplication $V$   (see Lemma~ \ref{lem10}).
This leads to the following statement.

\begin{Theorem}
\label{asr}
In the above notations,
\begin{equation}
\label{3.50}
\sum_{|n|>N} |\lambda_{n, \alpha} - n - \tau_\alpha |^2 <\infty,
\quad \alpha = 1,2.
\end{equation}
\end{Theorem}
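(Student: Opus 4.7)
The plan is to reduce the desired $\ell^2$-estimate to the two quantitative inputs already in hand: the Bari--Markus bound (\ref{3.3}) and the $\ell^2$-character of the Fourier coefficients $w^{\mu\nu}$ of $V$ established in Lemma~\ref{lem10}. By Corollary~\ref{cor1}, for $|n|>N$ the eigenvalue $\lambda_{n,\alpha}$ is simple and $P_{n,\alpha}$ is rank one. I would pick an eigenvector $\psi_{n,\alpha}\in\mathrm{Range}(P_{n,\alpha})$ normalized so that $\langle \psi_{n,\alpha},\tilde{\varphi}_n^\alpha\rangle=1$. This normalization is legitimate once $\kappa_{n,\alpha}:=\|P_{n,\alpha}-P_{n,\alpha}^0\|<1$: since $\psi_{n,\alpha}\in\mathrm{Range}(P_{n,\alpha})$ and $P_{n,\alpha}^0\psi_{n,\alpha}=\langle\psi_{n,\alpha},\tilde{\varphi}_n^\alpha\rangle\varphi_n^\alpha=\varphi_n^\alpha$, one has $\psi_{n,\alpha}-\varphi_n^\alpha=(P_{n,\alpha}-P_{n,\alpha}^0)\psi_{n,\alpha}$, which forces $\|\psi_{n,\alpha}\|\le \|\varphi_n^\alpha\|/(1-\kappa_{n,\alpha})$, a uniform bound for $|n|>N$.

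Expanding $\psi_{n,\alpha}=\varphi_n^\alpha+\sum_{(k,\beta)\neq(n,\alpha)} c_{k,\beta}\varphi_k^\beta$ with $c_{k,\beta}=\langle \psi_{n,\alpha},\tilde{\varphi}_k^\beta\rangle$, and pairing the eigenvalue equation $L_{bc}\psi_{n,\alpha}=\lambda_{n,\alpha}\psi_{n,\alpha}$ with $\tilde{\varphi}_n^\alpha$ (an eigenvector of $(L_{bc}^0)^*$ with eigenvalue $\overline{\lambda_{n,\alpha}^0}$ by Lemma~\ref{lem4}), I would obtain the exact identity
\begin{equation*}
\lambda_{n,\alpha}-\lambda_{n,\alpha}^0=\langle V\psi_{n,\alpha},\tilde{\varphi}_n^\alpha\rangle=w^{\alpha\alpha}(2n)+\sum_{(k,\beta)\neq(n,\alpha)} c_{k,\beta}\, w^{\alpha\beta}(n+k),
\end{equation*}
where I used $\langle V\varphi_k^\beta,\tilde{\varphi}_n^\alpha\rangle=w^{\alpha\beta}(n+k)$ from Lemma~\ref{lem10}. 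This isolates the ``diagonal'' second-order--like contribution $w^{\alpha\alpha}(2n)$ from an ``off-diagonal'' error that one can bound by the smallness of $P_{n,\alpha}-P_{n,\alpha}^0$.

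Applying Cauchy--Schwarz to the remaining sum, the Riesz basis property of $\Phi$ gives $\sum_{(k,\beta)\neq(n,\alpha)}|c_{k,\beta}|^2 \leq C\|\psi_{n,\alpha}-\varphi_n^\alpha\|^2 = C\|(P_{n,\alpha}-P_{n,\alpha}^0)\psi_{n,\alpha}\|^2 \leq C'\kappa_{n,\alpha}^2$, while $\sum_{(k,\beta)}|w^{\alpha\beta}(n+k)|^2\leq 2\max_\beta \|w^{\alpha\beta}\|_{\ell^2(2\mathbb{Z})}^2$ is finite and $n$-independent by Lemma~\ref{lem10}. Hence $|\lambda_{n,\alpha}-\lambda_{n,\alpha}^0-w^{\alpha\alpha}(2n)|\leq C''\kappa_{n,\alpha}$, and by $|a+b|^2\leq 2|a|^2+2|b|^2$,
\begin{equation*}
\sum_{|n|>N}|\lambda_{n,\alpha}-\lambda_{n,\alpha}^0|^2 \leq 2\sum_{|n|>N}|w^{\alpha\alpha}(2n)|^2+2(C'')^2\sum_{|n|>N}\kappa_{n,\alpha}^2<\infty,
\end{equation*}
which is finite by Lemma~\ref{lem10} and the Bari--Markus estimate (\ref{3.3}) of Theorem~\ref{thm1}. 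The only mildly technical step is the uniform boundedness of $\|\psi_{n,\alpha}\|$ needed to absorb that factor into the constant, but this is a soft consequence of $\kappa_{n,\alpha}\to 0$ and costs nothing after possibly enlarging $N$; the real content of the theorem is carried by the two $\ell^2$-inputs that are already in place.
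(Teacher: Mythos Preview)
Your proof is correct and follows essentially the same route as the paper: you derive the identity $(\lambda_{n,\alpha}-\lambda_{n,\alpha}^0)\langle\psi_{n,\alpha},\tilde\varphi_n^\alpha\rangle=\langle V\psi_{n,\alpha},\tilde\varphi_n^\alpha\rangle$ by pairing the eigenvalue equation with $\tilde\varphi_n^\alpha$, split off the diagonal term $w^{\alpha\alpha}(2n)$, and bound the remainder by $\kappa_{n,\alpha}$ using (\ref{3.3}). The only cosmetic differences are that the paper takes $\psi_n^\alpha=P_{n,\alpha}\varphi_n^\alpha$ (so the denominator is $1+O(\kappa_{n,\alpha})$ rather than exactly $1$) and estimates the error term via $\langle\psi_n^\alpha-\varphi_n^\alpha,V^*\tilde\varphi_n^\alpha\rangle$ and the uniform $L^\infty$-bound on $\tilde\varphi_n^\alpha$, whereas you use the Riesz-basis expansion and Cauchy--Schwarz; both yield the same $O(\kappa_{n,\alpha})$ bound.
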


\begin{proof}
Let $\Phi=\{\varphi^1_n, \varphi^2_n, \, n \in 2\mathbb{Z}\} $ be the basis of
eigenvectors of $L_{bc}^0 $ constructed in Lemma~\ref{lem3},
and let  $\tilde{\Phi}=\tilde{\varphi}^1_n, \tilde{\varphi}^2_n, \,
n \in 2\mathbb{Z}\} $
be its biorthogonal system.  We have
$$
L_{bc}^0  \varphi^\alpha_n = \lambda_{n, \alpha}^0 \varphi^\alpha_n, \quad
P^0_{n,\alpha} \varphi^\alpha_n = \varphi^\alpha_n, \quad \alpha = 1,2.
$$
and  (since $\tilde{\varphi}^\alpha_n $ are  eigenvectors of the
adjoint operator $(L_{bc}^0)^* $
corresponding to eigenvalues $\overline{\lambda_{n, \alpha}^0}$)
 \begin{equation}
 \label{3.101}
 (L_{bc}^0)^*  \tilde{\varphi}^\alpha_n =
 \overline{\lambda_{n, \alpha}^0} \tilde{\varphi}^\alpha_n , \quad
 \quad \alpha = 1,2.
\end{equation}
We set
$$
\psi_n^\alpha = P_{n, \alpha} \varphi^\alpha_n,
\quad \alpha = 1,2; \;\; n \in 2\mathbb{Z}, \;\; |n|>N.
$$
Then we have
\begin{equation}
 \label{3.102}
L_{bc}  \psi_n^\alpha = \lambda_{n, \alpha} \psi_n^\alpha
\end{equation}
and
\begin{equation}
 \label{3.103}
\| \psi_n^\alpha -\varphi^\alpha_n\| =
 \| (P_{n, \alpha}-P_{n, \alpha}^0   )\varphi^\alpha_n\|
\leq \kappa_{n,\alpha} \|  \varphi^\alpha_n\| \leq C \kappa_{n,\alpha},
\end{equation}
where $C $   is the norm of the isomorphism
$A$  from Lemma~\ref{lem3}, so $C=C(bc).$
By (\ref{3.102}),
$$
\lambda_{n, \alpha} \langle \psi_n^\alpha, \tilde{\varphi}^\alpha_n  \rangle =
\langle L_{bc} \psi_n^\alpha, \tilde{\varphi}^\alpha_n  \rangle
=\langle L^0_{bc} \psi_n^\alpha, \tilde{\varphi}^\alpha_n  \rangle
+\langle V\psi_n^\alpha, \tilde{\varphi}^\alpha_n  \rangle
$$
In view of (\ref{3.101}),
$$
\langle L^0_{bc} \psi_n^\alpha, \tilde{\varphi}^\alpha_n  \rangle
= \langle  \psi_n^\alpha, (L^0_{bc})^*\tilde{\varphi}^\alpha_n  \rangle=
\langle  \psi_n^\alpha, \overline{\lambda_{n, \alpha}^0}\tilde{\varphi}^\alpha_n  \rangle
=\lambda_{n, \alpha}^0
\langle  \psi_n^\alpha, \tilde{\varphi}^\alpha_n  \rangle.
$$
Therefore, we obtain,
$$
(\lambda_{n, \alpha} - \lambda_{n, \alpha}^0 ) \langle
\psi_n^\alpha, \tilde{\varphi}^\alpha_n  \rangle=
\langle V\psi_n^\alpha, \tilde{\varphi}^\alpha_n  \rangle,
$$
which leads to the formula
\begin{equation}
 \label{3.105}
 \lambda_{n, \alpha} - \lambda_{n, \alpha}^0=
 \frac{\langle V\psi_n^\alpha, \tilde{\varphi}^\alpha_n  \rangle}
 {\langle  \psi_n^\alpha, \tilde{\varphi}^\alpha_n  \rangle}.
 \end{equation}
By (\ref{3.103}),  it follows that
$$
\langle  \psi_n^\alpha, \tilde{\varphi}^\alpha_n  \rangle =
\langle   \varphi_n^\alpha, \tilde{\varphi}^\alpha_n  \rangle
+\langle  (\psi_n^\alpha -\varphi_n^\alpha),
\tilde{\varphi}^\alpha_n  \rangle =
1+O (\kappa_{n,\alpha}).
$$
On the other hand,
$$
\langle V\psi_n^\alpha, \tilde{\varphi}^\alpha_n  \rangle=
\langle V\varphi_n^\alpha, \tilde{\varphi}^\alpha_n  \rangle
+\langle V(\psi_n^\alpha-\varphi_n^\alpha), \tilde{\varphi}^\alpha_n  \rangle.
$$
By Lemma \ref{lem10} and (\ref{m3}), we have
$$
\langle V\varphi_n^\alpha, \tilde{\varphi}^\alpha_n  \rangle = w^{\alpha \alpha} (2n), \quad
\text{where} \quad \sum_n    w^{\alpha \alpha} (2n)|^2 < \infty.
$$
In view of (\ref{3.103}),
$$
\langle V(\psi_n^\alpha-\varphi_n^\alpha), \tilde{\varphi}^\alpha_n  \rangle
=\langle(\psi_n^\alpha-\varphi_n^\alpha, V^* \tilde{\varphi}^\alpha_n  \rangle
= O (\|\psi_n^\alpha-\varphi_n^\alpha \|) = O (\kappa_n)
$$
because the functions  $\tilde{\varphi}^\alpha_n$ are uniformly bounded
due to Lemma~\ref{lem3}, Formulas (\ref{21*}) and (\ref{22*}).

Therefore, by (\ref{3.105}), we obtain
$$
\lambda_{n, \alpha} - \lambda_{n, \alpha}^0 = \frac{w^{\alpha \alpha} (2n) + O(\kappa_n)}{1+O(\kappa_n)},
$$
From here (\ref{3.50}) follows because
$\sum_n |w^{\alpha \alpha} (2n)|^2 < \infty  $  by (\ref{m3})  and  $\sum_n \kappa_n^2 < \infty  $
by (\ref{3.3}).
\end{proof}

\section{Bari--Markus property in the case of regular but not
strictly regular boundary conditions}

We use the notations of Section 5.  For regular but not strictly
regular  $bc$  Theorem \ref{rl}  gives  the following
localization of the spectrum of the Dirac operator   $L_{bc}:$
 $$ Sp\, (L_{bc}) \subset R_{NT} \cup \bigcup_{|n|>N}
D_n . $$

Let us consider the Riesz projections associated with $L_{bc}$
\begin{equation}
\label{6.1} S_N = \frac{1}{2\pi i} \int_{\partial R_{NT}} (\lambda
- L)^{-1} d\lambda, \quad P_n = \frac{1}{2\pi i} \int_{\partial
D_n} (\lambda - L)^{-1} d\lambda,
\end{equation}
and let $S^0_N$ and $P_n^0$ be the Riesz projections associated
with the free operator $L^0_{bc}.$

\begin{Theorem}
\label{thm11} Suppose $L_{bc}$ and $L_{bc}^0$ are, respectively,
the Dirac operator with an $L^2$ potential $v$ and the
corresponding free Dirac operator, subject to regular but not
strictly regular boundary conditions $bc.$ Then, there is an $N
\in 2\mathbb{N}$ such that the Riesz projections $S_N, \,P_n$ and
$ S_N^0, \, P_n^0, \; n\in 2\mathbb{Z}, \; |n|>N, $ associated
with $L$ and $L^0$ are well defined by (\ref{3.1}), and we have
\begin{equation}
\label{6.2}  \dim P_n= \dim P_n^0= 2, \quad \dim S_N = \dim
S_N^0=2N;
\end{equation}
\begin{equation}
\label{6.3}   \sum_{|n|>N} \|P_n - P_n^0\|^2 < \infty.
\end{equation}
Moreover,  the system $\{S_N; \;P_n, \; n\in 2\mathbb{Z}, \; |n|>N
\, \}$ is a Riesz basis of projections in $L^2 ( [0,\pi] ,
\mathbb{C}^2),$  i.e.,
\begin{equation}
\label{6.3a}  {\bf f} = S_N ({\bf f}) + \sum_{|n|>N} P_n ({\bf f})
 \quad    \forall  {\bf f} \in L^2 ( [0,\pi] , \mathbb{C}^2),
\end{equation}
where the series converge unconditionally.
\end{Theorem}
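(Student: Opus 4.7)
The plan is to mimic the proof of Theorem~\ref{thm1} step by step, replacing the strictly regular ingredients by their counterparts from Section~5.3 and Lemmas~\ref{lem5}--\ref{lem6}. First, by Theorem~\ref{rl}, for $|\zeta|\le 1$ the spectrum of $L_{bc}(\zeta v)$ is contained in $R_{NT}\cup\bigcup_{|n|>N}D_n$ once $N=N(v,bc)$ is chosen so large that the right-hand sides of (\ref{2.31b}) (for $|m|>N$) and (\ref{2.32b}) become $<1$ after multiplication by $\|A\|\|A^{-1}\|$. Consequently the contour integrals (\ref{6.1}) are well defined for every $\zeta$ with $|\zeta|\le 1$, and depend analytically on $\zeta$; their traces are integer-valued continuous functions of $\zeta$, hence constant, which gives $\dim P_n=\dim P_n^0$ and $\dim S_N=\dim S_N^0$. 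Evaluating at $\zeta=0$ and using the explicit description of $Sp(L^0_{bc})=\{\tau_*+k,\ k\in 2\mathbb{Z}\}$ with each eigenvalue of algebraic multiplicity~$2$ (see (\ref{70.2}) and Lemmas \ref{lem5}, \ref{lem6}) yields (\ref{6.2}).

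For the Bari--Markus bound (\ref{6.3}), I would work with the Riesz basis $\Phi=\{\varphi^1_k,\varphi^2_k\}$ from Lemma~\ref{lem5} (periodic type) or Lemma~\ref{lem6} (otherwise) and its biorthogonal system $\tilde\Phi$, and estimate $(\|P_n-P_n^0\|^*_{HS})^2$ using the Neumann expansion
\[
P_n-P_n^0=\frac{1}{2\pi i}\int_{\partial D_n}\sum_{s=0}^\infty K_\lambda(K_\lambda V K_\lambda)^{s+1}K_\lambda\,d\lambda,
\]
valid once $\|K_\lambda VK_\lambda\|<1$ on $\partial D_n$, which is guaranteed for $|n|>N$ by (\ref{2.21a}) and Lemma~\ref{loc2}. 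Expanding the matrix elements via (\ref{1.16a}) produces exactly the same combinatorial sums as in the strictly regular case, with the simplification that every denominator carries the single shift $\tau_*$ instead of $\tau_{\gamma_j}$. As a result, the counterparts of the domination bounds (\ref{3.16})--(\ref{3.17}) become trivial: for $|z-n|=1/4$ and $j\ne n$ one has $|z-\tau_*-j+\tau_*|=|z-j|$, so the constant $C(\rho)$ in (\ref{3.16}) can be taken equal to $1$ and no ``cross-series'' estimate involving $|\tau_1-\tau_2|$ is needed. The key Cauchy-type vanishing (\ref{3.10}) survives verbatim for index tuples with $n\notin\{k,j_1,\dots,j_s,m\}$, so Lemma~\ref{lemsr} and Proposition~\ref{prop1} go through with the same four sums $B_1,\dots,B_4$ and the same $a_N\to0$ coming from (\ref{2.31b})--(\ref{2.32b}) and (\ref{2.18a}). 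Summing the resulting geometric-type series in $s,t$ exactly as in (\ref{201})--(\ref{202}) gives (\ref{6.3}).

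Finally, to obtain (\ref{6.3a}) I would invoke the Bari--Markus Theorem~\ref{thm2} applied to the pair
\[
\{S_N,\,P_n:\,|n|>N\}\quad\text{and}\quad\{S_N^0,\,P_n^0:\,|n|>N\}.
\]
The second system is a Riesz basis of projections because Lemmas~\ref{lem5} and \ref{lem6} provide a Riesz basis of root functions of $L^0_{bc}$, and grouping pairs within each $D_n$ yields a $2$-dimensional projection $P_n^0$ satisfying the assumptions of Theorem~\ref{thm2}. Together with the orthogonality $P_\alpha P_\beta=0$ for $\alpha\ne\beta$ inherited from the disjointness of the contours, the dimension matching (\ref{6.2}) and the square summability (\ref{6.3}), Theorem~\ref{thm2} then produces the desired unconditional decomposition.

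The only genuine obstacle is verifying that the estimates of Lemma~\ref{lemsr} and Proposition~\ref{prop1}, which in \cite{DM20} and in the proof of Theorem~\ref{thm1} were written for a specific radius $\rho=1/2$ (respectively $\rho$ from (\ref{2.29})), remain valid uniformly for the radius $1/4$ used in (\ref{2.30a}); this amounts to redoing the constant-tracking in (\ref{3.16})--(\ref{3.17}) with a single shift parameter $\tau_*$, which is strictly easier because the cross-term $|\tau_1-\tau_2|$ is absent and no bifurcation into two discs $D_n^1,D_n^2$ is needed.
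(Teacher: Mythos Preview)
Your proposal is correct and follows exactly the approach the paper indicates: the paper's own proof simply states that one may repeat the proof of Theorem~\ref{thm1} with a few obvious adjustments, and you have spelled out precisely what those adjustments are (replacing $\tau_1,\tau_2$ by the single $\tau_*$, using Lemmas~\ref{lem5}--\ref{lem6} and Lemma~\ref{loc2}/Theorem~\ref{rl} in place of their strictly regular counterparts, and noting that the cross-term estimate (\ref{3.17}) trivializes). Your observation that the constant $C(\rho)$ in (\ref{3.16}) may be taken equal to $1$ in this case is correct and is exactly the kind of simplification the paper has in mind.
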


\begin{proof}  One may prove the theorem by repeating
(with a few obvious adjustments) the
 proof of Theorem \ref{thm1}.  Therefore, the proof is omitted.
\end{proof}

Theorem \ref{thm11} immediately implies the following.

\begin{Corollary}
\label{cor2} The spectrum of $L_{bc}$ is discrete. Each of the
discs $D_n,   \; n\in 2\mathbb{Z}, \; |n|> N,$ contains exactly
two eigenvalues (counted with algebraic multiplicity) of $L_{bc},$
and the numbers of eigenvalues of $L^0_{bc}$ and $L_{bc}$ (counted
with algebraic multiplicity) in $R_{NT} $ are equal, namely
\begin{equation}
\label{6.4} \# \left ( Sp\, (L_{bc}) \cap R_{NT}  \right ) =\#
\left ( Sp\, (L^0_{bc}) \cap R_{NT}  \right) = 2N.
\end{equation}
\end{Corollary}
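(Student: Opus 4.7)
The plan is to deduce Corollary~\ref{cor2} directly from Theorem~\ref{thm11} (dimensions of the Riesz projections) together with Theorem~\ref{rl} (confinement of the spectrum), by a routine bookkeeping on those projections. I do not expect any genuine obstacle here; the corollary is simply the reading off of the dimension formulas in (\ref{6.2}) as eigenvalue counts, once the relevant regions are verified to be pairwise disjoint.

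First, I would invoke Theorem~\ref{rl} to place $Sp\,(L_{bc})$ inside $R_{NT}\cup\bigcup_{|n|>N}D_n$. A short geometric check shows that the discs $D_n$, defined in (\ref{2.30a}) with centers $\tau_*+n$ ($n\in 2\mathbb{Z}$) and radius $1/4$, are pairwise disjoint (consecutive centers are at distance $2$) and that each $D_n$ with $|n|>N$ is disjoint from the closed rectangle $\overline{R_{NT}}$ (the nearest relevant center sits at horizontal distance $\geq N+2$ from $\mathrm{Re}\,\tau_*$, while $R_{NT}$ extends only to $|x-\mathrm{Re}\,\tau_*|<N+1$, leaving a gap of more than $1/4$). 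Therefore the partition
$$
Sp\,(L_{bc})=\bigl(Sp\,(L_{bc})\cap R_{NT}\bigr)\;\sqcup\;\bigsqcup_{|n|>N}\bigl(Sp\,(L_{bc})\cap D_n\bigr)
$$
is disjoint, with each $D_n$ and $R_{NT}$ containing only isolated points of the spectrum.

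Next, I would apply the standard Riesz projection identity: the rank of the projection obtained by integrating the resolvent along a contour enclosing an isolated part of the spectrum equals the sum of the algebraic multiplicities of the eigenvalues inside the contour. Since Theorem~\ref{thm11} already guarantees that $P_n$ and $S_N$ are finite-dimensional with $\dim P_n=2$ and $\dim S_N=2N$, the first two assertions of the corollary follow at once: each $D_n$ ($|n|>N$) contains exactly two eigenvalues of $L_{bc}$ counted with algebraic multiplicity, and $R_{NT}$ contains exactly $2N$. Discreteness of $Sp\,(L_{bc})$ is then automatic, since the spectrum has been expressed as a countable disjoint union of finite sets of eigenvalues of finite algebraic multiplicity. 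Applying the same argument to the free operator, using $\dim S_N^0=2N$ from Theorem~\ref{thm11}, supplies the remaining equality in (\ref{6.4}), completing the proof.
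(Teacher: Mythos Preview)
Your proposal is correct and matches the paper's approach: the paper states that Corollary~\ref{cor2} follows immediately from Theorem~\ref{thm11}, and what you have written is precisely the standard unpacking of that implication via the rank identity for Riesz projections together with the disjointness of the localizing regions established in Theorem~\ref{rl}. There is nothing to add.
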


\section{Miscellaneous; pointwise convergence and equiconvergence}
 
1. Suppose  that $L^0_{bc} $  is the free Dirac operator considered
with  regular  boundary  conditions $(bc) $
given by the matrix  $\left [
\begin{array}{cccc}
1 & b & a & 0\\ 0 & d& c & 1
\end{array}
\right ]$ in (\ref{8}). Let  $\Phi= \{\varphi^1_k,\varphi^2_k, k\in \mathbb{Z}\}
$  be the corresponding Riesz  basis in $L^2 ([0,\pi], \mathbb{C}^2)
$ consisting of eigenfunctions and associated functions of
$L^0_{bc}, $ which is
 constructed, respectively, in
Lemma~\ref{lem3} if $bc$ is strictly regular, in Lemma~\ref{lem5} if
$bc$ is periodic type, and in Lemma~\ref{lem6} otherwise. Then we
have
\begin{equation}
\label{c0} \sum_{m\in 2\mathbb{Z}}\sum_{\mu=1}^2 \left \langle
\begin{pmatrix}  f\\g  \end{pmatrix},
\tilde{\varphi}_m^\mu  \right \rangle \varphi_m^\mu =
\begin{pmatrix}  f\\g  \end{pmatrix}, \quad \forall f,g
\in L^2 ([0,\pi], \mathbb{C}),
\end{equation}
where the series converges unconditionally in $L^2 ([0,\pi],
\mathbb{C}^2). $ The following statement gives sufficient conditions
for point-wise convergence of the series in (\ref{c0}) and explains
what is its sum for each $x \in [0,\pi].$

{\bf Pointwise Convergence Theorem.} {\em If $f,g: [0,\pi] \to
\mathbb{C} $
   are
functions of bounded variation which are continuous at 0 and $\pi,$
then
\begin{equation}
\label{c2} \lim_{M \to \infty}  \sum_{m=-M}^M  \left [  \left
\langle  \begin{pmatrix}  f\\g  \end{pmatrix}, \tilde{\varphi}_m^1
\right \rangle \varphi_m^1 (x) + \left \langle
\begin{pmatrix}  f\\g  \end{pmatrix}, \tilde{\varphi}_m^2
\right \rangle \varphi_m^2 (x) \right ] =
\begin{pmatrix}  \tilde{f}(x) \\ \tilde{g}(x)     \end{pmatrix}
\end{equation}
where
 \begin{equation}
\label{c03}
 \begin{pmatrix}   \tilde{f} (x) \\ \tilde{g} (x) \end{pmatrix}
=\frac{1}{2} \begin{pmatrix} f(x-0) + f(x+0) \\ g(x-0) + g(x+0)
\end{pmatrix}   \quad   \text{for}  \; \; x \in (0,\pi)
\end{equation}
and
\begin{equation}
\label{c3}
\begin{pmatrix}  \tilde{f}(x) \\ \tilde{g}(x)     \end{pmatrix} =
\begin{cases}
 \frac{1}{2} \begin{pmatrix} f(0) -b f(\pi) -a g(0) \\
   \frac{d}{bc-ad} f(0) + g(0) -\frac{b}{bc-ad} g(\pi)  \end{pmatrix}
 &  \text{if} \;\; x=0,   \\
    \frac{1}{2} \begin{pmatrix}  -\frac{c}{bc-ad}
    f(0)+f(\pi)+ \frac{a}{bc-ad} g(\pi) \\ -d f(\pi)  - c g(0) +g(\pi)
    \end{pmatrix}
 &  \text{if} \;\; x=\pi.
     \end{cases}
\end{equation}
Moreover, if both $f$ and $g$ are  continuous on some closed
subinterval of $(0,\pi)$ then the convergence (\ref{c2}) is uniform
on that interval.} \vspace{3mm}

2. Next, suppose that $v$ is an $L^2 ([0,\pi]) $ Dirac potential and
consider the operator $L_{bc} (v).$

For strictly regular $bc,$ Theorem~\ref{thm1} shows that there is a
Riesz basis of projections; Formula (\ref{3.3a})
 is an analog of
 (\ref{c0}). Moreover, since the projections $P^\alpha_n
$ that appear in (\ref{3.3a}) are one-dimensional while $\dim S_N =
2N, $ in fact Theorem~\ref{thm1} proves the existence of a Riesz
basis  $\varphi_m^\mu, \; m \in 2\mathbb{Z}, \mu \in \{1,2\}, $
consisting of eigenfunctions and at most finitely many associated
functions of the operator $L_{bc} (v).$

For regular but not strictly regular $bc,$ the existence of Riesz
basis of projections is proven in Theorem~\ref{thm11}, see Formula
(\ref{6.3a}). The Riesz projections $P_m $ that appear in
(\ref{6.3a}) are two-dimensional, and in general it is impossible to
"split" the corresponding two-dimensional subspaces into
one-dimensional so that to get a Riesz basis of functions (see in
\cite{DM26} results about existence and nonexistence  of Riesz
basis of functions in the case of periodic or antiperiodic $bc$).

However, in both cases  the spectral decompositions  (\ref{c0})) of
$L^0_{bc} $  and the spectral decompositions of
$L_{bc} (v)$
given, respectively,  by (\ref{3.3a}) for strictly regular $bc$ and
by (\ref{6.3a}) for regular but not strictly regular $bc,$ converge pointwise
to the same limit, or diverge simultaneously, due to the following.

{\bf Equiconvergence Theorem.} {\em  Let $S_N = S_N (v, bc)$ and $S_N^0 (bc) $ be the
projections defined by (\ref{3.1}), and let   $F: [0,\pi] \to \mathbb{C} $
be a function of bounded variation. Then,
for every regular $bc$ and every $L^2([0,\pi])$-potential $v, $
\begin{equation}
\label{c10}
\left \| \left ( S_N - S^0_N \right ) F \right \|_{\infty} 
\to 0 \quad \text{as} \quad N \to \infty
\end{equation}
}

Proofs and generalizations of these results will be presented elsewhere.
We are thankful to  R. Szmytkowski for bringing our attention to
the point-wise convergence problem of spectral decompositions of 1D Dirac operators.
In the case of separated boundary conditions,
our point-wise convergence results confirm the formula suggested by
 R. Szmytkowski (\cite[Formula 3.14]{SZ01}).


\begin{thebibliography}{99}
\bibitem{B} Bari, N. K. Biorthogonal systems and bases in Hilbert space.
(Russian) Moskov. Gos. Univ. U \v cenye Zapiski Matematika 148(4),
(1951). 69--107.

 \bibitem{Bir1}
G. D. Birkhoff,
On the asymptotic character of the solutions of certain linear
differential equations containing a parameter, 
Trans. Amer. Math. Soc. {\bf 9} (1908),
21--231.

\bibitem{Bir2} G. D. Birkhoff,
Boundary value and expansion problems of ordinary linear
differential equations,  Trans. Amer. Math. Soc. {\bf 9} (1908),
373--395.

\bibitem{DM3} P. Djakov and B. Mityagin, Smoothness of Schr\"odinger
operator potential in the case of Gevrey type asymptotics of the
gaps, J. Funct. Anal. {\bf 195} (2002), 89--128.


\bibitem{DM5} P. Djakov and B. Mityagin,
Spectral triangles of Schr\"odinger operators with complex
potentials. Selecta Math. (N.S.) {\bf 9} (2003),  495--528.


\bibitem{DM6} P. Djakov and B. Mityagin,
Instability zones of a periodic 1D Dirac operator and smoothness of
its potential. Comm. Math. Phys. {\bf 259} (2005), 139--183.


\bibitem{DM7} P. Djakov and B. Mityagin,
Spectra of 1-D periodic Dirac operators and smoothness of potentials.
C. R. Math. Acad. Sci. Soc. R. Can. {\bf 25} (2003), 121--125.



\bibitem{DM15} P. Djakov and B. Mityagin,
Instability zones of periodic 1D Schr\"odinger and Dirac operators
(Russian), Uspehi Mat. Nauk {\bf 61} (2006), no 4, 77--182 (English:
Russian Math. Surveys {\bf 61} (2006), no 4, 663--766).


\bibitem{DM20} P. Djakov and B. Mityagin,
Bari--Markus property for Riesz projections of 1D periodic Dirac
operators,   Math. Nachr.  {\bf 283}  (2010),  no. 3, 443--462.


\bibitem{DM26}  P. Djakov and B. Mityagin,
1D Dirac operators with special periodic potentials, arXiv:1007.3234.

\bibitem{Du58}  N. Dunford,  A survey of the theory of spectral operators, Bull.
Amer. Math. Soc. {\bf 64} (1058), 217 -- 274.

\bibitem{DS71}  N. Dunford, J. Schwartz, Linear Operators, Part III, Spectral
Operators, Wiley, New York, 1971.

\bibitem{GK} I. C. Gohberg, M. G. Krein, Introduction to the theory of linear
non-self-adjoint operators, vol. 18 (Translation of Mathematical
Monographs). Providence, Rhode Island, American Mathematical Society
1969.

\bibitem{HO09} S. Hassi and L. Oridoroga,
Theorem of Completeness for a Dirac-Type Operator with Generalized
$\lambda$-Depending Boundary Conditions,
Integral Equat. Oper. Theor. {\bf 64} (2009), 357-379.


\bibitem{Ke64} G. M. Keselman, On the unconditional convergence of eigenfunction
expansions of certain differential operators,
Izv. Vyssh. Uchebn. Zaved. Mat. {\bf 39} (2)
(1964), 82--93 (Russian).


\bibitem{MalOr}
M. M. Malamud and L. L. Oridoroga,  Theorems of the Completeness for
the Systems of Ordinary Differential Equations,  Functional Analysis
and Applications  {\bf 34}, No 3, (2000),   88--90.


\bibitem{M} Markus, A. S. A basis of root vectors of a dissipative operator.
Dokl. Akad. Nauk SSSR 132 524--527 (Russian); translated as Soviet
Math. Dokl. 1 1960 599--602.

\bibitem{Mi62} V. P. Mikhailov, On Riesz bases in $L^2(0, 1),$  Dokl. Akad. Nauk SSSR
{\bf 144} (1962), 981--984 (Russian).


\bibitem{Mit03}
B. Mityagin,  Convergence of expansions in eigenfunctions of the
Dirac operator. (Russian)
 Dokl. Akad. Nauk 393 (2003), no. 4, 456--459.



\bibitem{Mit04}
B. Mityagin,  Spectral expansions of one-dimensional periodic Dirac
operators. Dyn. Partial Differ. Equ. {\bf 1} (2004), 125--191.

\bibitem{RS}  M. Reed and B. Simon,
Methods of modern mathematical physics, vol. I,
Academic Press, New York, 1975.


\bibitem{Sh79}  A. A. Shkalikov, The basis property of eigenfunctions of an
ordinary differential operator,
Uspekhi Mat, Nauk 34 (1979), 235 -- 236 (Russian)

\bibitem{Sh82}  A. A. Shkalikov, On the basisness property of eigenfunctions of ordinary
differential operators with integral boundary conditions, Vestnik
Mosk. Univ., ser. 1, Math. \& Mech., {\bf 6} (1982), 12 -- 21.

\bibitem{Sh83}  A. A. Shkalikov, Boundary value problems for ordinary differential
equations with
a parameter in the boundary conditions, Trudy Sem. I. G. Petrovskogo 9
(1983),
190 -- 229 (Russian); English transl.: J. Sov. Math. 33 (6) (1986), 1311
-- 1342



\bibitem{SZ01} R. Szmytkowski, Discontinuities in Dirac eigenfunction expansions,
J. Math. Physics {\bf 42}, 4606--4617.

 \bibitem{Tam1}  J. D. Tamarkin,
 Sur quelques points de la theorie des equations di?erentielles lineaires
ordinaires et sur la generalisation de la serie de Fourier,  Rend. Circ. Mat. Palermo
(2) {\bf 34} (1912), 345--382.



  \bibitem{Tam2}  J. D. Tamarkin,
 On some general problems of the theory of ordinary linear dif-
ferential operators and on expansion of arbitrary function into serii,  Petrograd.
1917,  308 p.

 \bibitem{Tam3}  J. D. Tamarkin,
Some general problems of the theory of linear di?erential equations and
expansions of an arbitrary functions in series of fundamental functions,
Math. Z. {\bf 27}   (1928), 1--54.



\bibitem{TY01}
I. Trooshin and M. Yamamoto,  Riesz basis of root vectors of a
nonsymmetric system of first-order ordinary differential operators
and application to inverse eigenvalue problems,  Appl. Anal. {\bf
80}, (2001), 19--51.

\bibitem{TY02}
I. Trooshin and M. Yamamoto, Spectral properties and an inverse
eigenvalue problem for nonsymmetric systems of ordinary differential
operators,  J. Inverse Ill--Posed Probl. {\bf 10} No 6, (2002),
643--658.




\end{thebibliography}
\end{document}